\numberwithin{equation}{section}
\newtheorem{theorem}{Theorem}[section]
\newtheorem{corollary}[theorem]{Corollary}
\newtheorem{lemma}[theorem]{Lemma}
\newtheorem{proposition}[theorem]{Proposition}
\theoremstyle{definition}
\newtheorem{remark}[theorem]{Remark}
\theoremstyle{definition}
\theoremstyle{definition}
\newtheorem{assumption}[theorem]{Assumption}
\def\dashint{\operatorname%
{\,\,\text{\bf--}\kern-.98em\DOTSI\intop\ilimits@\!\!}}
\def\bH{\mathbb{H}}
\def\bN{\mathbb{N}}
\def\bR{\mathbb{R}}
\def\cA{\mathcal{A}}
\def\cB{\mathcal{B}}
\def\cC{\mathcal{C}}
\def\cD{\mathcal{D}}
\def\cG{\mathcal{G}}
\def\cH{\mathcal{H}}
\def\cM{\mathcal{M}}
\def\cS{\mathcal{S}}
\begin{document}
\title[time fractional parabolic equations]{$L_p$-estimates for time fractional parabolic equations with coefficients measurable in time}

\author[H. Dong]{Hongjie Dong}
\address[H. Dong]{Division of Applied Mathematics, Brown University, 182 George Street, Providence, RI 02912, USA}

\email{Hongjie\_Dong@brown.edu}

\thanks{H. Dong was partially supported by the NSF under agreement DMS-1600593.}

\author[D. Kim]{Doyoon Kim}
\address[D. Kim]{Department of Mathematics, Korea University, 145 Anam-ro, Seongbuk-gu, Seoul, 02841, Republic of Korea}

\email{doyoon\_kim@korea.ac.kr}

\thanks{D. Kim was supported by Basic Science Research Program through the National Research Foundation of Korea (NRF) funded by the Ministry of Education (2016R1D1A1B03934369).}

\subjclass[2010]{35R11, 26A33, 35R05}

\keywords{parabolic equation, time fractional derivative, measurable coefficients, small mean oscillations}

\begin{abstract}
We establish the $L_p$-solvability for time fractional parabolic equations when coefficients are merely measurable in the time variable.
In the spatial variables, the leading coefficients locally have small mean oscillations. Our results extend a recent result in \cite{MR3581300} to a large extent.
\end{abstract}

\maketitle

\section{Introduction}

In this paper, we consider time fractional parabolic equations with a non-local type time derivative term of the form
\begin{equation}
							\label{eq0525_01}
- \partial_t^\alpha u + a^{ij}(t,x) D_{ij} u + b^i(t,x) D_i u + c(t,x) u = f(t,x)
\end{equation}
in $(0,T) \times \bR^d$, where $\partial_t^\alpha u$ is the Caputo fractional derivative of order $\alpha \in (0,1)$:
$$
\partial_t^\alpha u(t,x) = \frac{1}{\Gamma(1-\alpha)} \frac{d}{dt} \int_0^t (t-s)^{-\alpha} \left[ u(s,x) - u(0,x) \right] \, ds.
$$
See Sections \ref{sec2} and \ref{Sec3} for a precise definition and properties of $\partial_t^\alpha u$.
Our main result is that, for a given $f \in L_p\left((0,T) \times \bR^d \right)$, there exists a unique solution $u$ to the equation \eqref{eq0525_01} in $(0,T) \times \bR^d$ with the estimate
$$
\||\partial_t^\alpha u|+|u|+|
Du|+|D^2u|\|_{L_p\left((0,T) \times \bR^d \right)} \le N \|f\|_{L_p\left((0,T) \times \bR^d \right)}.
$$

The assumptions on the coefficients $a^{ij}$, $b^i$, and $c$ are as follows.
The leading coefficients $a^{ij}=a^{ij}(t,x)$ satisfy the uniform ellipticity condition and have no regularity in the time variable.
Dealing with such coefficients in the setting of $L_p$ spaces is the main focus of this paper.
As functions of $x$, locally the coefficients $a^{ij}$ have small (bounded) mean oscillations (small BMO).
See Assumption \ref{assump2.2}.
The lower-order coefficients $b^i$ and $c$ are assumed to be only bounded and measurable.

If the fractional (or non-local) time derivative $\partial_t^\alpha u$ is replaced with the local time derivative $u_t$, the equation \eqref{eq0525_01} becomes the usual second-order non-divergence form parabolic equation
\begin{equation}
							\label{eq0525_02}
-u_t + a^{ij} D_{ij} u + b^i D_i u + c u = f.
\end{equation}
As is well known, there is a great amount of literature on the regularity and solvability for equations as in \eqref{eq0525_02} in various function spaces.
Among them, we only refer the reader to the papers \cite{MR2304157, MR2352490, MR2771670}, which contain corresponding results of this paper to parabolic  equations as in \eqref{eq0525_02}.
More precisely, in these papers, the unique solvability results are proved in Sobolev spaces for elliptic and parabolic equations/systems. In particular, for the parabolic case, the leading coefficients are assumed to satisfy the same conditions as mentioned above.
This class of coefficients was first introduced by Krylov in \cite{MR2304157} for parabolic equations in Sobolev spaces.
In \cite{MR2352490}, the results in \cite{MR2304157} were generalized to the mixed Sobolev norm setting, and in \cite{MR2771670} to higher-order elliptic and parabolic systems.
Thus, one can say that the unique solvability of solutions in  Sobolev spaces to parabolic equations as in \eqref{eq0525_02} is well established when coefficients are merely measurable in the time variable.
On the other hand, it is well known that the $L_p$-solvability of elliptic and parabolic equations requires the leading coefficients to have some regularity conditions in the spatial variables.
See, for instance, the paper \cite{MR3488249}, where the author shows the impossibility of finding solutions in $L_p$ spaces to one spatial dimensional parabolic equations if $p \notin (3/2,3)$ and the leading coefficient are merely measurable in $(t,x)$.

In view of mathematical interests and applications, it is a natural and interesting question to explore whether the corresponding $L_p$-solvability results hold for equations as in \eqref{eq0525_01} for the same class of coefficients as in \cite{MR2304157, MR2352490, MR2771670}.
In a recent paper \cite{MR3581300} the authors proved the unique solvability of solutions in mixed $L_{p,q}$ spaces to the time fractional parabolic equation  \eqref{eq0525_01} under the stronger assumption that the leading coefficients are piecewise continuous in time and uniformly continuous in the spatial variables.
Hence, the results in this paper can be regarded as a generalization of the results in \cite{MR3581300} to a large extent, so that one can have the same class of coefficients as in \cite{MR2304157, MR2352490, MR2771670} for the time non-local equation \eqref{eq0525_01} in $L_p$ spaces.
We note that in \cite{MR3581300} the authors discussed the case $\alpha \in (0,2)$, whereas in this paper we only discuss the parabolic regime $\alpha \in (0,1)$.
It is also worth noting that, for parabolic equations as in \eqref{eq0525_02}, it is possible to consider more general classes of coefficients than those in \cite{MR2304157, MR2352490, MR2771670}.
Regarding this, see \cite{DK15}, where the classes of coefficients under consideration include those $a^{ij}(t,x)$ measurable both in one spatial direction and in time except, for instance, $a^{11}(t,x)$, which is measurable either in time or in the spatial direction.

Besides \cite{MR3581300}, there are a number of papers about parabolic equations with a non-local type time derivative term.
For divergence type time fractional parabolic equations in the Hilbert space setting, see \cite{MR2538276}, where the time fractional derivative is a generalized version of the Caputo fractional derivative.
One can find De Giorgi-Nash-Moser type H\"{o}lder estimates for time fractional parabolic equations in \cite{MR3038123}, and for parabolic equations with fractional operators in both $t$ and $x$ in \cite{MR3488533}.
For other related papers and further information about time fractional parabolic equations and their applications, we refer to \cite{MR3581300} and the references therein.

As a standard scheme in $L_p$-theory, to establish the main results of this paper, we prove a priori estimates for solutions to \eqref{eq0525_01}.
In \cite{MR3581300} a representation formula for a solution to the time fractional heat operator $-\partial_t^\alpha u + \Delta u$ is used, from which the $L_p$-estimate is derived for the operator.
Then for uniformly continuous coefficients, a perturbation argument takes places to derive the main results of the paper.
Our proof is completely different.
Since $a^{ij}$ are measurable in time, it is impossible to treat the equation via a perturbation argument from the time fractional heat equation.
Thus, instead of considering a representation formula for equations with coefficients measurable in time, which does not seem to be available, we start with the $L_2$-estimate and solvability, which can be obtained from integration by parts.
We then exploit a level set argument originally due to Caffarelli and Peral \cite{MR1486629} as well as a ``crawling of ink spots'' lemma, which was originally due to Safonov and Krylov \cite{MR579490, MR563790}.
The main difficulty arises in the key step where one needs to estimate local $L_\infty$ estimates of the Hessian of solutions to locally homogeneous equations.
Starting from the $L_2$-estimate and applying the Sobolev type embedding results proved in Appendix, we are only able to show that such Hessian are in $L_{p_1}$ for some $p_1>2$, instead of $L_\infty$.
Nevertheless, this allows us to obtain the $L_p$ estimate and solvability for any $p\in [2,p_1)$ and $a^{ij}=a^{ij}(t)$ by using a modified level set type argument.
Then we repeat this procedure and iteratively increase the exponent $p$ for any $p\in [2,\infty)$. In the case when $p\in (1,2)$, we apply a duality argument.
For equations with the leading coefficients being measurable in $t$ and locally having small mean oscillations in $x$, we apply a perturbation argument (see, for instance, \cite{MR2304157}).
This is done by incorporating the small mean oscillations of the coefficients into local mean oscillation estimates of solutions having compact support in the spatial variables.
Then, the standard partition of unity argument completes the proof.

In forthcoming work, we will generalize our results for time fraction parabolic equations with more general coefficients considered, for example, in \cite{DK15}.  We will also consider solutions in Sobolev spaces with mixed norms as in \cite{MR3581300} as well as equations in domains.

The remainder of the paper is organized as follows.
In the next section, we introduce some notation and state the main results of the paper. In Section \ref{Sec3}, we define function spaces for fractional time derivatives and show some of their properties. In Section \ref{sec4}, we prove the $L_2$ estimate and solvability for equations with coefficients depending only on $t$, and then derive certain local estimates, which will be used later in the iteration argument. We give the estimates of level sets of Hessian in Section \ref{sec5} and complete the proofs of the main theorems in Section \ref{sec6}. In Appendix, we establish several Sobolev type embedding theorems involving time fractional derivatives and prove a ``crawling of ink spots'' lemma adapted to our setting.

\section{Notation and main results}
							\label{sec2}

We first introduce some notation used through the paper.
For $\alpha \in (0,1)$, denote
$$
I^\alpha \varphi(t) = I_0^\alpha \varphi(t) = \frac{1}{\Gamma(\alpha)} \int_0^t (t-s)^{\alpha - 1} \varphi(s) \, ds
$$
for $\varphi \in L_1(\bR^+)$,
where
$$
\Gamma(\alpha) = \int_0^\infty t^{\alpha - 1} e^{-t} \, dt.
$$
In \cite{MR1544927} $I^\alpha \varphi$ is called $\alpha$-th integral of $\varphi$ with origin $0$.
For $0 < \alpha < 1$ and sufficiently smooth function $\varphi(t)$, we set
$$
D_t^\alpha \varphi(t) = \frac{d}{dt} I^{1-\alpha} \varphi(t) = \frac{1}{\Gamma(1-\alpha)} \frac{d}{dt} \int_0^t (t-s)^{-\alpha} \varphi(s) \, ds,
$$
and
\begin{align*}
\partial_t^\alpha \varphi(t) &= \frac{1}{\Gamma(1-\alpha)} \int_0^t (t-s)^{-\alpha} \varphi'(s) \, ds\\
&= \frac{1}{\Gamma(1-\alpha)} \frac{d}{dt} \int_0^t (t-s)^{-\alpha} \left[ \varphi(s) - \varphi(0) \right] \, ds.
\end{align*}
Note that if $\varphi(0) = 0$, then
$$
\partial_t(I^{1-\alpha} \varphi) = \partial_t^\alpha \varphi.
$$
Let $\cD$ be a subset (not necessarily open) of $\bR^k$, $k \in \{1,2, \ldots\}$.
By $\varphi \in C_0^\infty(\cD)$,
we mean that $\varphi$ is infinitely differentiable in $\cD$ and is supported in the intersection of $\cD$ and a bounded open subset in $\bR^d$.
In particular, $\varphi$ may not be zero on the boundary of $\cD$, unless $\cD$ is an open subset of $\bR^k$.
For $\alpha \in (0,1)$, we denote
$$
Q_{R_1,R_2}(t,x) = (t-R_1^{2/\alpha}, t) \times B_{R_2}(x) \quad \text{and} \quad Q_R(t,x)=Q_{R,R}(t,x).
$$
We often write $B_R$ and $Q_R$ instead of $B_R(0)$ and $Q_R(0,0)$, respectively.

In this paper, we assume that there exists $\delta \in (0,1)$ such that
$$
a^{ij}(t,x)\xi_j \xi_j \geq \delta |\xi|^2,\quad |a^{ij}| \leq \delta^{-1}
$$
for any $\xi \in \bR^d$ and $(t,x) \in \bR \times \bR^d$.

Our first main result is for equations with coefficients $a^{ij}$ depending only on the time variable without any regularity assumptions.

\begin{theorem}
							\label{thm0412_1}
Let $\alpha \in (0,1)$, $T \in (0,\infty)$, $a^{ij} = a^{ij}(t)$, and $p \in (1,\infty)$.
Suppose that $u \in \bH_{p,0}^{\alpha,2}(\bR^d_T)$ satisfies
\begin{equation}
							\label{eq0411_03}
-\partial_t^\alpha u + a^{ij} D_{ij} u
= f
\end{equation}
in $\bR^d_T := (0,T) \times \bR^d$.
Then there exists $N = N(d,\delta,\alpha,p)$ such that
\begin{equation}
							\label{eq0411_04}
\|\partial_t^\alpha u\|_{L_p(\bR^d_T)} + \|D^2 u\|_{L_p(\bR^d_T)} \leq N \|f\|_{L_p(\bR^d_T)}.
\end{equation}
Moreover, for $f \in L_p(\bR^d_T)$, there exists a unique $u \in \bH_{p,0}^{\alpha,2}(\bR^d_T)$ satisfying \eqref{eq0411_03} and \eqref{eq0411_04}.
\end{theorem}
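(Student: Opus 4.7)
My plan follows the roadmap indicated in the introduction. I would treat the case $p=2$ directly by energy methods, bootstrap to $p \in [2,\infty)$ via a Caffarelli-Peral level-set argument combined with local higher integrability, and finally reach $p \in (1,2)$ by duality.

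For the $L_2$-estimate, I would test \eqref{eq0411_03} against $u$ and use the convexity-type inequality $\partial_t^\alpha(u^2) \le 2 u\,\partial_t^\alpha u$ (valid for $u$ vanishing at $t=0$) together with the uniform ellipticity of $a^{ij}(t)$. Differentiating in $x$ --- permissible since $a^{ij}$ does not depend on $x$ --- and iterating the argument yields control of $\|D^2 u\|_{L_2}$, and then from the equation itself the bound on $\|\partial_t^\alpha u\|_{L_2}$. Solvability in $\bH_{2,0}^{\alpha,2}(\bR^d_T)$ then follows from this a priori estimate by the method of continuity, with $-\partial_t^\alpha + \Delta$ as the reference operator.

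Next I would derive local higher integrability. For $w$ solving the homogeneous equation $-\partial_t^\alpha w + a^{ij}(t) D_{ij} w = 0$ in $Q_{2R}$, the spatial derivatives $D^k w$ satisfy the same equation, so Caccioppoli-type bounds combined with the time-fractional Sobolev embeddings developed in the Appendix promote $D^2 w \in L_2(Q_R)$ to $D^2 w \in L_{p_1}(Q_{R/2})$ for some $p_1 = p_1(d,\alpha) > 2$. For $p \in [2,p_1)$ I would then run the level-set argument: on each cylinder decompose $u=v+w$ with $w$ homogeneous and $v$ absorbing the right-hand side, compare the local $L_{p_1}$-mean of $D^2 w$ with the $L_2$-estimate of $D^2 v$ controlled by $f$, and obtain a good-$\lambda$ inequality of the form
\[
|\{|D^2 u|>K\lambda\}\cap Q_R| \le \varepsilon\,|Q_R| + |\{\cM(|f|^{p_0})>c\lambda^{p_0}\}\cap Q_R|
\]
for some $p_0 \in (2,p_1)$. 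The anisotropic crawling-of-ink-spots lemma from the Appendix, formulated for cylinders $Q_r=(t-r^{2/\alpha},t)\times B_r$, then converts this into \eqref{eq0411_04} for $p\in[2,p_1)$. Running the same two steps with $p_1$ replacing $2$ raises the accessible range; finitely many iterations cover all $p\in[2,\infty)$. The case $p\in(1,2)$ follows by duality, since the formal adjoint is a backward time-fractional equation with the same structure, for which the already established $L_{p'}$-theory with $p'\in(2,\infty)$ applies.

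The main obstacle is that the local gain in the second step produces only a finite $p_1>2$, rather than the $L_\infty$ bounds on $D^2 w$ that are available in the classical parabolic setting; the Caffarelli-Peral argument therefore has to be modified to work with $L_{p_1}$-means in the comparison step and then iterated. A secondary but persistent difficulty is to respect the anisotropic scaling --- $R$ in space versus $R^{2/\alpha}$ in time --- dictated by the Caputo derivative, which must be tracked consistently through the Sobolev embeddings, the level-set decomposition, and the ink-spots lemma.
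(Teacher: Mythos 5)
Your overall architecture matches the paper's (energy method at $p=2$; Caffarelli--Peral level-set bootstrap for $p\in[2,\infty)$; duality for $p\in(1,2)$), and you correctly identify two of the three main difficulties (only a finite gain $p_1>2$; the anisotropic $R$ versus $R^{2/\alpha}$ scaling). However, there is a genuine gap in the level-set step that would prevent the argument as you have written it from closing.

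The proposed good-$\lambda$ inequality
\[
|\{|D^2 u|>K\lambda\}\cap Q_R| \le \varepsilon\,|Q_R| + |\{\cM(|f|^{p_0})>c\lambda^{p_0}\}\cap Q_R|
\]
cannot be obtained, because the Caputo derivative has memory: when you localize the ``homogeneous'' part $w$ of the decomposition by a temporal cutoff $\eta$, the commutator $\partial_t^\alpha(\eta w)-\eta\,\partial_t^\alpha w$ produces a term of the form
\[
\cG(t,x)=\frac{\alpha}{\Gamma(1-\alpha)}\int_S^t (t-s)^{-\alpha-1}\bigl(\eta(t)-\eta(s)\bigr)\,D^2 w(s,x)\,ds,
\]
which depends on the entire past history of $D^2 u$, not on $f$. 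Consequently the local $L_{p_1}$-estimate for $D^2 w$ in $Q_{R/2}$ necessarily carries a tail contribution of the type
\[
\sum_{k\ge 0} 2^{-k\alpha}\Bigl(\dashint_{t_0-(2^{k+1}+1)R^{2/\alpha}}^{\;t_0}\dashint_{B_R}|D^2 u|^{p}\Bigr)^{1/p},
\]
and the correct comparison set must be taken to be
\[
\cB(s)=\Bigl\{\gamma^{-1/p}\bigl(\cM|f|^p\bigr)^{1/p}+\gamma^{-1/p_1}\bigl(\cS\cM|D^2 u|^p\bigr)^{1/p}>s\Bigr\},
\]
where $\cS\cM$ is the \emph{strong} parabolic maximal function over cylinders $Q_{R_1,R_2}$ with independent temporal and spatial radii. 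Two points here are essential and are missing from your plan: (i) the good-$\lambda$ inequality must include a term involving $D^2 u$ itself, so the argument closes only because the weight is $\gamma^{-1/p_1}$ with $p_1>p$, producing a factor $\gamma^{1-p/p_1}$ that can be absorbed by choosing $\gamma$ small; and (ii) the memory tail has uncontrolled aspect ratio in $t$ versus $x$, which is precisely why the strong maximal function (rather than the ordinary parabolic one) is required, and why the ink-spots lemma must be formulated on $(-\infty,T)\times\bR^d$ rather than on a bounded set. Without these modifications your comparison step fails. Two minor points: your opening $L_2$ step, as literally stated (``test against $u$''), gives control of $\|Du\|_{L_2}$ in terms of $\|u\|_{L_2}$, which is not one of the claimed quantities; the paper instead tests against $\Delta u$ (equivalently, differentiates in $x_k$ and tests $D_k u$ against $D_k u$) so that the Hessian appears at once without needing to differentiate $f$. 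Also, the gained exponent $p_1$ depends on $p$ as well as on $d$ and $\alpha$; what is uniform in $p$ is only the lower bound on the increment $p_1-p$.
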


We refer the reader to Section \ref{Sec3} for the definitions of function spaces including $\bH_{p,0}^{\alpha,2}(\bR^d_T)$.

We also consider more general operators with lower-order terms and with coefficients depending on both $t$ and $x$. In this case, we impose the following VMO$_x$ condition on the leading coefficients.
\begin{assumption}[$\gamma_0$]
                        \label{assump2.2}
There is a constant $R_0\in (0,1]$ such that for each parabolic cylinder $Q_r(t_0,x_0)$ with $r\le R_0$ and $(t_0,x_0)\in \bR^{d+1}$, we have
$$
\sup_{i,j}\dashint_{Q_r(t_0,x_0)}|a^{ij}-\bar a^{ij}(t)|\,dx\,dt\le \gamma_0,
$$
where $\bar a^{ij}(t)$ is the average of $a^{ij}(t,\cdot)$ in $B_r(x_0)$.
\end{assumption}

\begin{remark}
                            \label{rem2.3}
From the above assumption, we have that for any $x_0\in \bR^d$ and $a, b \in \bR$ such that $b - a > R_0^{2/\alpha}$, there exists $\bar{a}^{ij}(t)$ satisfying the ellipticity condition and
$$
\dashint_{\!a}^{\,\,\,b} \dashint_{B_{R_0}(x_0)} |a^{ij} - \bar{a}^{ij}(t)| \, dx \, dt \leq 2 \gamma_0.
$$
Indeed, find $k \in \{1,2, \ldots\}$ such that
$$
b - (k+1) R_0^{2/\alpha} \leq a < b - k R_0^{2/\alpha},\quad \text{i.e.,}\,\,
\frac{1}{k+1} \leq \frac{R_0^{2/\alpha}}{b-a} < \frac{1}{k},
$$
and set $\bar a^{ij}(t)$ to be the average of $a^{ij}(t,\cdot)$ in $B_{R_0}(x_0)$.
We then see that
\begin{align*}
&\dashint_{\!a}^{\,\,\,b} \dashint_{B_{R_0}(x_0)} |a^{ij} - \bar{a}^{ij}(t)| \, dx \, dt = \frac{1}{b-a} \int_a^b \dashint_{B_{R_0}(x_0)} |a^{ij} - \bar{a}^{ij}(t)| \, dx \, dt\\
&\leq \frac{R_0^{2/\alpha}}{b-a} \sum_{j=0}^k \dashint_{\!b-(j+1)R_0^{2/\alpha}}^{\,\,\,b-j R_0^{2/\alpha}} \dashint_{B_{R_0}(x_0)} |a^{ij} - \bar{a}^{ij}(t)| \, dx \, dt\\
&\leq \frac{R_0^{2/\alpha}}{b-a} (k+1) \gamma_0 \leq \frac{k+1}{k} \gamma_0 \leq 2 \gamma_0.
\end{align*}
\end{remark}

We also assume that the lower-order coefficients $b^i$ and $c$ satisfy
$$
|b^i|\le \delta^{-1},\quad |c|\le \delta^{-1}.
$$
\begin{theorem}
							\label{main_thm}
Let $\alpha \in (0,1)$, $T \in (0,\infty)$, and $p \in (1,\infty)$. There exists $\gamma_0\in (0,1)$ depending only on $d$, $\delta$, $\alpha$, and $p$, such that, under Assumption \ref{assump2.2} ($\gamma_0$), the following hold. Suppose that $u \in \bH_{p,0}^{\alpha,2}(\bR^d_T)$ satisfies
\begin{equation}
							\label{eq0411_03c}
-\partial_t^\alpha u + a^{ij} D_{ij} u+b^i D_i u+cu
= f
\end{equation}
in $\bR^d_T$.
Then there exists $N = N(d,\delta,\alpha,p,R_0,T)$ such that
\begin{equation}
							\label{eq0411_04c}
\|u\|_{\bH_p^{\alpha,2}(\bR^d_T)} \leq N \|f\|_{L_{p}(\bR^d_T)}.
\end{equation}
Moreover, for $f \in L_{p}(\bR^d_T)$, there exists a unique $u \in \bH_{p,0}^{\alpha,2}(\bR^d_T)$ satisfying \eqref{eq0411_03c} and \eqref{eq0411_04c}.
\end{theorem}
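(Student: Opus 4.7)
The plan is to reduce Theorem \ref{main_thm} to Theorem \ref{thm0412_1} (which handles $a^{ij}=a^{ij}(t)$ with no lower-order terms) through a freezing-of-coefficients argument in $x$, coupled with a Fefferman--Stein type sharp function estimate. I would first derive the a priori bound \eqref{eq0411_04c} under the additional assumption that $u$ has compact spatial support, then remove this via a partition of unity at scale $R_0$, and finally deduce existence from the a priori estimate and Theorem \ref{thm0412_1} by the method of continuity along the family $(1-\lambda)(-\partial_t^\alpha + \Delta) + \lambda(\text{given operator})$.

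For the core step, fix $(t_0,x_0) \in \bR^{d+1}$ and $r \le R_0$, and let $\bar a^{ij}(t)$ denote the spatial average of $a^{ij}(t,\cdot)$ on $B_r(x_0)$. Rewrite \eqref{eq0411_03c} as $-\partial_t^\alpha u + \bar a^{ij}(t) D_{ij} u = F$, where $F = f - (a^{ij}-\bar a^{ij}) D_{ij} u - b^i D_i u - cu$. I would then decompose $u = v + w$ on the cylinder $Q_{r,r}(t_0,x_0)$: the piece $v$ absorbs the perturbation and is estimated globally using Theorem \ref{thm0412_1}, while $w = u - v$ satisfies a homogeneous equation with coefficients depending only on $t$ and is controlled by the local higher integrability bounds on $D^2 w$ established in Section \ref{sec5}. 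Assumption \ref{assump2.2}($\gamma_0$) together with H\"older's inequality (with some exponent $s > p$ in the range permitted by Theorem \ref{thm0412_1}) extracts a factor of $\gamma_0^{1/s'}$ from the $(a^{ij}-\bar a^{ij})D_{ij}u$ term. Combining the two contributions produces a pointwise sharp function inequality
\begin{equation*}
(D^2 u)^{\sharp}(t_0,x_0) \le N \gamma_0^{1/s'} \bigl(\cM(|D^2 u|^s)(t_0,x_0)\bigr)^{1/s} + N \bigl(\cM(|f|^s+|u|^s+|Du|^s)(t_0,x_0)\bigr)^{1/s}
\end{equation*}
relative to the parabolic cylinders $Q_{r,r}$, where $\sharp$ is the sharp function and $\cM$ the Hardy--Littlewood maximal function on that scaling.

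Applying the Fefferman--Stein theorem and the maximal function estimate and then choosing $\gamma_0$ small enough to absorb the $\gamma_0^{1/s'}\|D^2 u\|_{L_p}$ term on the right yields $\|D^2 u\|_{L_p(\bR^d_T)} \le N(\|f\|_{L_p} + \|u\|_{L_p} + \|Du\|_{L_p})$. The lower-order norms on the right are handled by Gagliardo--Nirenberg interpolation together with a small-$T$ argument (followed by iteration in the time variable to reach any finite $T$), and $\|\partial_t^\alpha u\|_{L_p}$ is then read off from \eqref{eq0411_03c}. The compact spatial support assumption is removed by a standard partition-of-unity argument at scale $R_0$, using that the commutator of $a^{ij}D_{ij}$ with a spatial cutoff generates only first-order and zeroth-order terms that are already controlled. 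The main obstacle I anticipate is the construction of the decomposition $u = v + w$ so that the cutoffs interact cleanly with the nonlocal operator $\partial_t^\alpha$ and the time-only-measurable $\bar a^{ij}(t)$; in particular, one must avoid introducing spurious commutators with $\partial_t^\alpha$, which is one reason the approach relies on the global $L_p$ bound from Theorem \ref{thm0412_1} rather than on a localized time estimate. Once the sharp function inequality is in place, uniqueness follows immediately from the a priori estimate and existence from the method of continuity.
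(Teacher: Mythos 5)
Your plan shares the freezing-of-coefficients perturbation, the partition-of-unity step, and the method of continuity with the paper, but the core mechanism is genuinely different: you propose a pointwise Fefferman--Stein sharp-function inequality, whereas the paper runs a modified good-$\lambda$/level-set argument (Lemma \ref{lem6.3} combined with the ``crawling of ink spots'' lemma, Lemma \ref{lem0409_2}). This is not a stylistic difference, and as written your proposal has a gap that the paper's choice of machinery was specifically designed to avoid.

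The Fefferman--Stein route needs an $L_\infty$ (or at least a mean-oscillation-decay) interior bound for $D^2 w$, where $w$ solves the locally homogeneous equation $-\partial_t^\alpha w + \bar a^{ij}(t)D_{ij}w = 0$. For time-fractional equations this is precisely what is \emph{not} available in one shot: Proposition \ref{prop0406_1} and its perturbed version Proposition \ref{prop0515_1} only produce $D^2 w \in L_{p_1}$ on a smaller cylinder, with $p_1 = p_1(d,\alpha,p)$ finite unless $p > d/2 + 1/\alpha$. For $p \le d/2 + 1/\alpha$, your pointwise sharp-function inequality does not follow from what Section \ref{sec5} provides, since the mean oscillation of $D^2 w$ over shrinking cylinders has no decay coming from an $L_{p_1}$ bound alone. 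One could try to bootstrap the interior regularity of $w$ to $L_\infty$ by iterating Lemma \ref{lem0731_2}, but that requires re-localizing in time at each step and controlling the nonlocal correction term $\cG$ that carries the whole history of $w$; the paper avoids all of this by keeping the argument at the level of measure of superlevel sets, where $L_{p_1}$ with $p_1 > p$ is already enough (via Chebyshev) to close a good-$\lambda$ inequality.

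There is also a mechanical problem with your H\"older step: you take $s > p$, but to absorb the term $\gamma_0^{1/s'}\|D^2 u\|_{L_p}$ after Fefferman--Stein and Hardy--Littlewood you need the exponent inside $\cM(\cdot)$ to lie strictly \emph{below} $p$. The paper chooses $p_0 < p_0\mu < p < p_1$ in Step 1 of the proof of Theorem \ref{main_thm} for exactly this reason; with $s > p$ the maximal function estimate does not produce $\|D^2 u\|_{L_p}$ on the right and the absorption fails.

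Finally, your ``small-$T$ plus iterate'' removal of $\|u\|_{L_p}$ is underspecified: $\partial_t^\alpha$ is nonlocal in time, so one cannot simply restart the equation on each later time slab. The paper's Step 3 first invokes the embedding $\|u\|_{L_p(\bR^d;L_q(0,T'))} \le N\|\partial_t^\alpha u\|_{L_p((0,T')\times\bR^d)}$ from Lemma \ref{lem1110_1} and then runs an induction over slabs $ (jT/m,(j+1)T/m) $ in which the cumulative norm $\|u\|_{L_p((0,jT/m)\times\bR^d)}$ is kept on the right-hand side; a Gagliardo--Nirenberg interpolation in the spatial variables alone would not accomplish this.
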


\section{Function spaces}
                        \label{Sec3}

Let $\Omega$ be a domain (open and connected, but not necessarily bounded) in $\bR^d$.
For $T > 0$, we denote
$$
\Omega_T = (0,T) \times \Omega \subset \bR \times \bR^d.
$$
Thus, if $\Omega = \bR^d$, we write $\bR^d_T = (0,T) \times \bR^d$.

For $S>-\infty$ and $\alpha\in (0,1)$, let $I_S^{1-\alpha} u$ be the $(1-\alpha)$-th integral of $u$ with origin $S$:
$$
I_S^{1-\alpha} u = \frac{1}{\Gamma(1-\alpha)}\int_S^t (t-s)^{-\alpha} u(s, x) \, ds.
$$
Throughout the paper, $I_0^{1-\alpha}$ is denoted by $I^{1-\alpha}$.

For $1 \le p \le \infty$, $\alpha \in (0,1)$, $T > 0$, and $k \in \{1,2,\ldots\}$, we set
$$
\widetilde{\bH}_p^{\alpha,k}(\Omega_T) = \left\{ u \in L_p(\Omega_T): D_t^\alpha u, \, D^\beta_x u \in L_p(\Omega_T), \, 0 \leq |\beta| \leq k
\right\}
$$
with the norm
\begin{equation*}
\|u\|_{\widetilde{\bH}_p^{\alpha,k}(\Omega_T)} = \|D_t^\alpha u\|_{L_p(\Omega_T)} + \sum_{0 \leq |\beta| \leq k}\|D_x^\beta u\|_{L_p(\Omega_T)},
\end{equation*}
where by $D_t^\alpha u$ or $\partial_t(I^{1-\alpha}u) (= \partial_t (I_0^{1-\alpha}u))$ we mean that there exists $g \in L_p(\Omega_T)$ such that
\begin{equation}
							\label{eq0122_01}
\int_0^T\int_\Omega g(t,x) \varphi(t,x) \, dx \, dt = - \int_0^T\int_\Omega I^{1-\alpha}u(t,x) \partial_t \varphi(t,x) \, dx \, dt
\end{equation}
for all $\varphi \in C_0^\infty(\Omega_T)$.
If we have a domain $(S,T) \times \Omega$ in place of $\Omega_T$, where $-\infty < S < T < \infty$, we write
$\widetilde{\bH}_p^{\alpha,k}\left((S,T) \times \Omega\right)$.
In this case
$$
D_t^\alpha u(t,x) = \partial_t I_S^{1-\alpha} u(t,x).
$$

Now we set
$$
\bH_p^{\alpha,k}(\Omega_T) = \left\{ u \in \widetilde{\bH}_p^{\alpha,k}(\Omega_T): \text{\eqref{eq0122_01} is satisfied for all}\,\, \varphi \in C_0^\infty\left([0,T) \times \Omega\right)\right\}
$$
with the same norm as for $\widetilde{\bH}_p^{\alpha,k}(\Omega_T)$.
Similarly, we define $\bH_p^{\alpha,k}((S,T)\times\Omega)$.
If \eqref{eq0122_01} holds for all functions $\varphi \in C_0^\infty\left([0,T) \times \Omega \right)$, then one can regard that $I^{1-\alpha}u(t)|_{t=0} = 0$ in the trace sense with respect to the time variable.
In Lemma \ref{lem0123_1} below, we  show that, if $\alpha \leq 1 - 1/p$, then $\bH_p^{\alpha,k}(\Omega_T)=\widetilde{\bH}_p^{\alpha,k}(\Omega_T)$.

\begin{lemma}
							\label{lem0123_1}
Let $p \in [1,\infty]$, $\alpha \in (0,1)$, $k \in \{1,2,\ldots\}$ and
$$
\alpha \le 1 - 1/p.
$$
Then, for $u \in \widetilde{\bH}_p^{\alpha,k}(\Omega_T)$, the equality \eqref{eq0122_01} holds for all $\varphi \in C_0^\infty\left([0,T) \times \Omega\right)$.
\end{lemma}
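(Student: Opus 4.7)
The plan is to approximate an arbitrary test function $\varphi\in C_0^\infty([0,T)\times \Omega)$ by $\varphi_\epsilon(t,x):=\eta_\epsilon(t)\varphi(t,x)$, where $\eta_\epsilon\in C^\infty(\bR)$ satisfies $\eta_\epsilon\equiv 0$ on $(-\infty,\epsilon]$, $\eta_\epsilon\equiv 1$ on $[2\epsilon,\infty)$, and $|\eta_\epsilon'|\le C/\epsilon$. Since $\varphi_\epsilon\in C_0^\infty(\Omega_T)$, the defining identity \eqref{eq0122_01} holds for $\varphi_\epsilon$ by hypothesis, and the goal is to pass to the limit $\epsilon\to 0^+$.

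As a preliminary, I note via Minkowski's integral inequality (equivalently, Young's convolution estimate applied in the $t$ variable with kernel $t^{-\alpha}\mathbf{1}_{(0,T)}\in L_1$) that $I^{1-\alpha}u\in L_p(\Omega_T)$ with $\|I^{1-\alpha}u\|_{L_p(\Omega_T)}\le C T^{1-\alpha}\|u\|_{L_p(\Omega_T)}$; in particular $I^{1-\alpha}u$ is integrable on the support of $\varphi$. The left side $\int_{\Omega_T}g\,\varphi_\epsilon$ converges to $\int_{\Omega_T}g\,\varphi$ by dominated convergence. Writing $\partial_t\varphi_\epsilon=\eta_\epsilon\partial_t\varphi+\eta_\epsilon'\varphi$, the main term $\int_{\Omega_T} I^{1-\alpha}u\cdot\eta_\epsilon\partial_t\varphi$ converges to $\int_{\Omega_T} I^{1-\alpha}u\cdot\partial_t\varphi$ by dominated convergence as well.

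The decisive step is to show that the boundary-layer term $\int_{\Omega_T} I^{1-\alpha}u\cdot\eta_\epsilon'\varphi$ tends to zero. With $K\subset\Omega$ the compact spatial support of $\varphi$, this is bounded by $C\epsilon^{-1}\int_\epsilon^{2\epsilon}\int_K|I^{1-\alpha}u(t,x)|\,dx\,dt$. Interchanging the order of integration in the defining formula for $I^{1-\alpha}u$ and using $\int_{\max(s,\epsilon)}^{2\epsilon}(t-s)^{-\alpha}\,dt\le C\epsilon^{1-\alpha}$ for $s\in(0,2\epsilon)$, followed by H\"older's inequality in $s$ with conjugate exponents $p$ and $p'$, yields
\[
\frac{1}{\epsilon}\int_\epsilon^{2\epsilon}|I^{1-\alpha}u(t,x)|\,dt\le C\,\epsilon^{1-\alpha-1/p}\,\|u(\cdot,x)\|_{L_p(0,2\epsilon)}.
\]
The hypothesis $\alpha\le 1-1/p$ is precisely what makes the exponent of $\epsilon$ nonnegative.

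Integrating in $x$ over $K$ and applying H\"older in $x$ gives the bound $C\,\epsilon^{1-\alpha-1/p}\,|K|^{1/p'}\,\|u\|_{L_p((0,2\epsilon)\times K)}$. This vanishes as $\epsilon\to 0$: when $\alpha<1-1/p$ the positive power of $\epsilon$ alone suffices, whereas in the endpoint case $\alpha=1-1/p$ the decay must come entirely from the absolute continuity of the Lebesgue integral, namely $\|u\|_{L_p((0,2\epsilon)\times K)}\to 0$ by dominated convergence applied to $|u|^p\,\mathbf{1}_{(0,2\epsilon)\times K}$. The main obstacle is exactly this endpoint case $\alpha=1-1/p$, where the kernel $(t-s)^{-\alpha}$ fails to be in $L_{p'}$ near the diagonal and one can no longer extract a pointwise power-rate of decay for $I^{1-\alpha}u(t,x)$ as $t\to 0^+$; the whole of the vanishing must be squeezed out of the $L_p$ norm of $u$ on a shrinking time slab.
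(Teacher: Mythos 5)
Your argument is correct and follows the same overall strategy as the paper: cut off near $t=0$ so that $\varphi\eta_\epsilon$ becomes a legitimate test function in $C_0^\infty(\Omega_T)$, use the hypothesis that \eqref{eq0122_01} holds for such functions, and then show that the boundary-layer term produced by $\eta_\epsilon'$ vanishes as $\epsilon\to 0^+$ precisely because $\alpha\le 1-1/p$ makes the exponent $1-\alpha-1/p$ nonnegative, with absolute continuity of the Lebesgue integral supplying the decay in the endpoint case.

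The one genuine difference is the mechanism used to estimate that boundary layer. The paper appeals to Lemma \ref{lem1018_01} with exponent $1-\alpha$, a Hardy--Littlewood type bound for the fractional integral on the small interval $(0,1/k)$, to reach $|J_k(x)|\le Nk^{\alpha-1+1/p}\|u(\cdot,x)\|_{L_p(0,1/k)}$. You instead unfold $I^{1-\alpha}u$ by Fubini, bound the inner kernel integral directly, $\int_{\max(s,\epsilon)}^{2\epsilon}(t-s)^{-\alpha}\,dt\le C\epsilon^{1-\alpha}$ for $s\in(0,2\epsilon)$, and finish with two applications of H\"older. This yields the identical power $\epsilon^{1-\alpha-1/p}$ but without invoking the appendix lemma, so your route is slightly more self-contained. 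A further small improvement: you require $\eta_\epsilon\equiv 0$ on $(-\infty,\epsilon]$, which makes the support of $\varphi\eta_\epsilon$ manifestly compact in $(0,T)\times\Omega$; the paper's $\eta_k$ is only stipulated to vanish for $t\le 0$, which, taken literally, need not push the support of $\varphi\eta_k$ off $\{t=0\}$, so your choice of cutoff is the technically cleaner one.
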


\begin{proof}
Let $\eta_k(t)$ be an infinitely differentiable function such that $0 \leq \eta_k(t) \leq 1$, $\eta_k(t) = 0$ for $t \leq 0$, $\eta_k(t) = 1$ for $t \geq 1/k$, and $|\partial_t \eta_k(t)| \leq 2k$.
Then
\begin{align*}
&\int_0^T I^{1-\alpha} u(t,x) \partial_t (\varphi(t) \eta_k(t)) \, dt\\
&= \int_0^T I^{1-\alpha}u(t,x) \partial_t \varphi(t) \eta_k(t) \, dt + \int_0^T  I^{1-\alpha} u(t,x) \varphi(t) \partial_t \eta_k(t) \, dt.
\end{align*}
To prove the desired equality, we only need to show that
$$
\int_0^T \int_\Omega I^{1-\alpha}u(t,x) \varphi(t) \partial_t \eta_k(t)\, dx \, dt \to 0
$$
as $k \to \infty$.
Note that
$$
\int_0^T I^{1-\alpha}u(t,x) \varphi(t) \partial_t \eta_k(t) \, dt = \int_0^{1/k} I^{1-\alpha}u(t,x) \varphi(t) \partial_t \eta_k(t) \, dt =:J_k(x).
$$
Then, by Lemma \ref{lem1018_01} with $1-\alpha$ in place of $\alpha$, for any $q \in [1,\infty]$ satisfying
$$
1- \alpha - 1/p > -1/q,
$$
we have
\begin{align*}
|J_k(x)| &\leq N k \int_0^{1/k} I^{1-\alpha} |u(t,x)| \, dt
\leq N k^{1/q} \left( \int_{0}^{1/k} \left|I^{1-\alpha}|u(t,x)|\right|^q \, dt \right)^{1/q}\\
&\leq N k^{\alpha - 1 + 1/p}\|u(\cdot,x)\|_{L_p(0,1/k)} \to 0
\end{align*}
as $k \to \infty$, provided that $\alpha \le 1 - 1/p$.
The lemma is proved.
\end{proof}

We now prove that every function in $\bH_p^{\alpha,k}
(\Omega_T)$ can be approximated by infinitely differentiable functions up to the boundary with respect to the time variable.

\begin{proposition}
							\label{prop0120_1}
Let $p \in [1,\infty)$, $\alpha \in (0,1)$, and $k \in \{1,2,\ldots\}$.
Then functions in $C^\infty\left([0,T] \times \Omega\right)$ vanishing for large $|x|$ are dense in $\bH_p^{\alpha,k}(\Omega_T)$.
\end{proposition}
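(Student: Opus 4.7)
My plan is to construct the smooth approximants in three successive regularizations: cut off and mollify in the spatial variable, apply a small positive shift in the time variable, and finally mollify in time against a one-sided kernel. A diagonal selection will then yield the required sequence in $C^\infty([0,T]\times\Omega)$ with compact $x$-support.

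For the spatial step I would pick cut-offs $\zeta_n\in C_0^\infty(\Omega)$ exhausting $\Omega$ with uniformly bounded derivatives and a standard $x$-mollifier $\psi_\epsilon$. Since both $\zeta_n$ and $\psi_\epsilon$ are independent of $t$, multiplication by $\zeta_n$ and convolution in $x$ with $\psi_\epsilon$ commute with $D_t^\alpha$ and preserve the defining identity \eqref{eq0122_01} for test functions $\varphi\in C_0^\infty([0,T)\times\Omega)$. Standard $L_p$-mollifier theory then gives $(\zeta_n u)*\psi_\epsilon\to u$ in $\bH_p^{\alpha,k}(\Omega_T)$, and from here on I may assume that $u$ is already smooth in $x$ with compact $x$-support.

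Next, for $\tau\in(0,T)$, I would set
\begin{equation*}
u_\tau(t,x)=\begin{cases} u(t-\tau,x), & t\in(\tau,T),\\ 0, & t\in[0,\tau].\end{cases}
\end{equation*}
A change of variables in the definition of $I^{1-\alpha}$ shows $I^{1-\alpha}u_\tau(t,\cdot)=I^{1-\alpha}u(t-\tau,\cdot)$ for $t>\tau$ and $0$ otherwise, so $u_\tau\in\bH_p^{\alpha,k}(\Omega_T)$ with $D_t^\alpha u_\tau(t,\cdot)=(D_t^\alpha u)(t-\tau,\cdot)$ on $(\tau,T)$, and $u_\tau\to u$ in $\bH_p^{\alpha,k}(\Omega_T)$ as $\tau\to 0^+$ by $L_p$-continuity of translations. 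For $\epsilon\in(0,\tau)$ and a mollifier $\eta_\epsilon\in C_0^\infty((0,\epsilon))$, I would then form
\begin{equation*}
v_{\tau,\epsilon}(t,x)=\int_0^\epsilon\eta_\epsilon(s)\,u_\tau(t-s,x)\,ds
\end{equation*}
(with $u_\tau$ extended by zero for $t<0$). Convolution with a $C_0^\infty$ function makes $v_{\tau,\epsilon}$ smooth in $t$ on all of $\bR$; moreover it vanishes for $t<\tau-\epsilon$ and stays smooth in $x$ with compact $x$-support. Fubini's theorem, together with the fact that $u_\tau$ vanishes on $(-\infty,\tau)$, yields
\begin{equation*}
I^{1-\alpha}v_{\tau,\epsilon}(t,x)=\int_0^\epsilon\eta_\epsilon(s)\,I^{1-\alpha}u_\tau(t-s,x)\,ds,
\end{equation*}
and differentiating in $t$ gives the analogous relation for $D_t^\alpha$. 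Standard mollifier theory then produces $v_{\tau,\epsilon}\to u_\tau$ in $\bH_p^{\alpha,k}(\Omega_T)$ as $\epsilon\to 0^+$.

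The main subtlety will be justifying the commutation $D_t^\alpha(\eta_\epsilon*u_\tau)=\eta_\epsilon*D_t^\alpha u_\tau$ between $D_t^\alpha$ and temporal convolution. Because $D_t^\alpha$ is defined with origin at $t=0$, it is not translation-invariant and need not commute with $\eta_\epsilon*$ for a generic element of $\bH_p^{\alpha,k}(\Omega_T)$. The time shift in the second step is precisely what removes this obstacle: once $u_\tau$ vanishes on an initial time interval, (after zero-extension to $t<0$) the operator $I^{1-\alpha}$ reduces to convolution in $t$ against the locally integrable kernel $t_+^{-\alpha}/\Gamma(1-\alpha)$, and the desired commutation becomes associativity of convolutions. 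A standard diagonal argument in $n$, $\epsilon$ (spatial), $\tau$, and $\epsilon$ (temporal) then concludes.
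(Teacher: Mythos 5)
Your proposal is correct, and it achieves the same end by a structurally similar route, but the technical devices differ in a genuine way worth noting. The paper performs a single joint space--time mollification $u^{(\varepsilon)}$ against a kernel $\eta_\varepsilon$ supported in $(0,\varepsilon^{2/\alpha})\times B_\varepsilon$ and then a spatial cutoff at the end; you decouple the spatial and temporal regularizations and do the spatial cutoff first. More importantly, the key point in both arguments is the commutation of $D_t^\alpha$ with a one-sided time mollifier, and you and the paper justify it differently. The paper deduces $D_t^\alpha u^{(\varepsilon)}=(D_t^\alpha u)^{(\varepsilon)}$ by feeding the shifted kernel $\eta_\varepsilon(t-\cdot,x-\cdot)$ directly into the defining identity \eqref{eq0122_01} (legitimate because that kernel, as a function of $(s,y)$, lies in $C_0^\infty([0,T)\times\bR^d)$ once $t<T$), so the absence of a boundary term at $s=0$ is exactly the membership condition of $\bH_p^{\alpha,k}$. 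You instead perform an initial time shift $u\mapsto u_\tau$, which by a change of variables gives $I^{1-\alpha}u_\tau(t)=I^{1-\alpha}u(t-\tau)$ (and $0$ for $t\le\tau$), hence $D_t^\alpha u_\tau=(D_t^\alpha u)(\cdot-\tau)$; once $u_\tau$ vanishes on $[0,\tau]$, the fractional integral is a plain convolution against $t_+^{-\alpha}/\Gamma(1-\alpha)$, and commuting it with $\eta_\epsilon\ast$ is ordinary associativity, with no need to invoke \eqref{eq0122_01} a second time. The shift also needs $u_\tau\in\bH_p^{\alpha,k}(\Omega_T)$, which you get by the same change-of-variables applied to the test-function identity, and convergence $u_\tau\to u$ uses the $L_p$-continuity of translation of $D_t^\alpha u$ extended by zero. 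Both approaches hinge on the one-sided nature of the time kernel; your shift makes the ``no jump at $t=0$'' phenomenon manifest rather than invoking it through \eqref{eq0122_01}, which is a clean alternative. As in the paper (which explicitly restricts the detailed proof to $\Omega=\bR^d$), your spatial cutoff argument should be read with $\Omega=\bR^d$ in mind, where cutoffs with uniformly bounded derivatives exist; for general $\Omega$ the paper defers to a partition-of-unity argument and you would need to do the same.
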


\begin{proof}
We prove only the case when $\Omega = \bR^d$.
More precisely, we show that
$C^\infty_0\left([0,T] \times \bR^d\right)$ is dense in $\bH_p^{\alpha,k}(\bR^d_T)$.
The proof of the case when $\Omega = \bR^d_+$ is similar.
For a general $\Omega$, the claim is proved using a partition of unity with respect to the spatial variables.
See, for instance, \cite{MR0164252}.

Let $u \in \bH_p^{\alpha,k}(\bR^d_T)$.
Let $\eta(t,x)$ be an infinitely differentiable function defined in $\bR^{d+1}$ satisfying $\eta \ge 0$,
$$
\eta(t,x) = 0 \quad \text{outside} \,\,(0,1)\times B_1,\quad \int_{\bR^{d+1}} \eta \, dx \, dt = 1.
$$
Set
$$
\eta_\varepsilon(t,x) = \frac{1}{\varepsilon^{d+2/\alpha}} \eta(t/\varepsilon^{2/\alpha}, x/\varepsilon)
$$
and
$$
u^{(\varepsilon)}(t,x) = \int_\bR \int_{\bR^d} \eta_{\varepsilon}(t-s,x-y) u(s,y) I_{0 < s < T} \, dy \, ds.
$$
Then it follows easily that $u^{(\varepsilon)}(t,x) \in C^\infty(\bR^{d+1})$ and, for $(t,x) \in (0,T) \times \bR^d$ and $0 \leq |\beta| \leq k$,
\begin{equation}
							\label{eq0120_01}
D^\beta_x u^{(\varepsilon)}(t,x) = \int_\bR \int_{\bR^d} \eta_{\varepsilon}(t-s,x-y) D^\beta_x u(s,y)  I_{0<s<T} \, dy \, ds.
\end{equation}
Moreover, for $(t,x) \in (0,T) \times \bR^d$,
\begin{equation}
							\label{eq0120_02}
D_t^\alpha u^{(\varepsilon)}(t,x) = \int_{\bR}\int_{\bR^d} \eta_{\varepsilon}(t-s,x-y) D^\alpha_t u(s,y)  I_{0<s<T} \, dy \, ds.
\end{equation}
To see \eqref{eq0120_02}, we first check that
\begin{equation}
							\label{eq0124_01}
I^{1-\alpha} u^{(\varepsilon)}(t,x) = (I^{1-\alpha} u)^{(\varepsilon)}(t,x).
\end{equation}
Indeed,
\begin{align*}
&\Gamma(1-\alpha) I^{1-\alpha} u^{(\varepsilon)}(t,x)\\
&= \int_0^t (t-s)^{-\alpha}\int_0^T \int_{\bR^d} \eta_\varepsilon(s-r,x-y) u(r,y) \, dy \, dr \, ds\\
&= \int_{\bR^d} \int_0^T \int_0^t (t-s)^{-\alpha} \eta_\varepsilon(s-r,x-y) u(r,y) \, ds \, dr \, dy\\
&= \int_{\bR^d} \int_0^t \int_r^t (t-s)^{-\alpha} \eta_\varepsilon(s-r,x-y) u(r,y) \, ds \, dr \, dy,
\end{align*}
where we used the fact that $\eta(t,x) = 0$ if $t \leq 0$.
Then by the change of variable $\rho = t-s+r$ in the integration with respect to $s$,
we have
\begin{align*}
&\Gamma(1-\alpha) I^{1-\alpha} u^{(\varepsilon)}(t,x) = \int_{\bR^d} \int_0^t \int_r^t (\rho-r)^{-\alpha} \eta_\varepsilon(t-\rho,x-y) u(r,y) \, d\rho \, dr \, dy\\
&= \int_{\bR^d} \int_0^t \eta_\varepsilon(t-\rho,x-y) \int_0^\rho (\rho-r)^{-\alpha} u(r,y)  \, dr \, d\rho \, dy\\
&= \int_{\bR^d} \int_0^T \eta_\varepsilon(t-\rho,x-y) \int_0^\rho (\rho-r)^{-\alpha} u(r,y)  \, dr \, d\rho \, dy\\
&= \Gamma(1-\alpha) (I^{1-\alpha}u)^{(\varepsilon)}(t,x).
\end{align*}
Hence, the inequality \eqref{eq0124_01} is proved.

Now observe that
\begin{align*}
&\int_{\bR}\int_{\bR^d} \eta_{\varepsilon}(t-s,x-y) D^\alpha_t u(s,y)  I_{0<s<T} \, dy \, ds\\
&= \int_0^T \int_{\bR^d} \eta_\varepsilon(t-s,x-y) \partial_s I^{1-\alpha}u(s,y) \, dy \, ds\\
&= \int_0^T \int_{\bR^d} (\partial_t \eta_\varepsilon) (t-s,x-y) I^{1-\alpha}u(s,y) \, dy \, ds\\
&= \partial_t \left[\int_0^T \int_{\bR^d} \eta_\varepsilon(t-s,x-y) I^{1-\alpha} u(s,y) \, dy \, ds\right]\\
&= \partial_t (I^{1-\alpha} u)^{(\varepsilon)}(t,x) = \partial_t I^{1-\alpha} u^{(\varepsilon)}(t,x) = D_t^\alpha u^{(\varepsilon)}(t,x),
\end{align*}
where in the second equality we used the fact that $u$ satisfies \eqref{eq0122_01} for all $\varphi \in C_0^\infty\left([0,T) \times \bR^d\right)$
and, by the choice of $\eta$, $\eta_\varepsilon(t-T,x-y) = 0$.
From the equalities \eqref{eq0120_01} and \eqref{eq0120_02}, we see that
$$
\|u^{(\varepsilon)} - u\|_{\bH_p^{\alpha,k}(\bR^d_T)} \to 0
$$
as $\varepsilon \to 0$.
Finally, we take a smooth cutoff function $\zeta\in C_0^\infty(\bR^d)$ such that $\operatorname{supp} \zeta \subset B_2$ and $\zeta=1$ in $B_1$, and denote $\zeta_\varepsilon(x)=\zeta(x/\varepsilon)$. Then by the uniform bound of $
\|u^{(\varepsilon)}\|_{\bH_p^{\alpha,k}(\bR^d_T)}$, it is easily seen that
$$
\|u^{(\varepsilon)} - u^{(\varepsilon)}\zeta_\varepsilon\|_{\bH_p^{\alpha,k}(\bR^d_T)} \to 0
$$
as $\varepsilon \to 0$. The lemma is proved.
\end{proof}

\begin{remark}
							\label{rem0606_1}
If the boundary of $\Omega$ is sufficiently smooth, for instance $\Omega$ is a Lipschitz domain, then $C^\infty\big([0,T] \times \overline{\Omega} \big)$ is dense in $\bH_p^{\alpha,k}(\Omega_T)$.
\end{remark}

\begin{remark}
Lemma \ref{lem0123_1} shows that $\bH_p^{\alpha,k}(\Omega_T) = \widetilde{\bH}_p^{\alpha,k}(\Omega_T)$ whenever $\alpha \leq 1 - 1/p$, $p \in [1,\infty]$.
Hence, by Proposition \ref{prop0120_1}, it follows that functions in $C^\infty\left([0,T] \times \Omega\right)$ vanishing for large $|x|$ are dense in $\widetilde{\bH}_p^{\alpha,k}(\Omega_T)$, provided that $\alpha \leq 1 - 1/p$, $p \in [1,\infty)$, $\alpha \in (0,1)$, and $k \in \{1,2,\dots\}$.
However, in the case $\alpha > 1 - 1/p$, we have $$
\bH_p^{\alpha,k}(\Omega_T) \subsetneq \widetilde{\bH}_p^{\alpha,k}(\Omega_T).
$$
To see this, let
$$
u(t)=t^{\alpha-1},
$$
where $\alpha \in (1-1/p,1)$ and $p \in [1,\infty)$.
Then $u \in L_p(0,T)$ and
$$
I^{1-\alpha} u(t) = \frac{1}{\Gamma(1-\alpha)} \int_0^t (t-s)^{-\alpha} s^{\alpha-1} \, ds = \Gamma(\alpha),
$$
which is a nonzero constant, so that $$
\partial_t I^{1-\alpha} u=0.
$$
Thus,
$$
u, \, D_t^\alpha u \in L_p(0,T).
$$
However, clearly
the integration by parts formula \eqref{eq0122_01} does not hold for $\varphi \in C_0^\infty[0,T)$.
The above example also shows that, even though we have
$$
u, \, D_t^\alpha u\in L_p((0,T))
$$
for $\alpha > 1-1/p$, it is not likely to gain better integrability or regularity (up to the boundary) of $u$, as apposed to the usual Sobolev embedding results.
\end{remark}

To deal with solutions with the zero initial condition,
we define
$\bH_{p,0}^{\alpha,k}((S,T)\times\Omega)$
to be functions in $\bH_p^{\alpha,k}((S,T)\times\Omega)$
each of which is
approximated by a sequence $\{u_n(t,x)\} \subset C^\infty\left([S,T]\times \Omega\right)$ such that $u_n$ vanishes for large $|x|$ and $u_n(S,x) = 0$.
For $u \in \bH_{p,0}^{\alpha,k}((S,T)\times\Omega)$ and for any approximation sequences $\{u_n\}$ such that
$u_n \to u$ in $\bH_p^{\alpha,k}
((S,T)\times\Omega)$ with $u_n \in C^\infty\left([S,T] \times \Omega\right)$ and $u_n(S,x) = 0$, we have
$$
\partial_t^\alpha u_n = D_t^\alpha u_n.
$$
Thus, when, for instance, $S=0$, for $u \in \bH_{p,0}^{\alpha,k}(\Omega_T)$, we define
$$
\partial_t^\alpha u := D_t^\alpha u = \frac{1}{\Gamma(1-\alpha)} \partial_t \int_0^t (t-s)^{-\alpha} u(s,x) \, ds.
$$

\begin{lemma}
							\label{lem0206_1}
Let $p \in [1,\infty)$, $\alpha \in (0,1)$, $k \in \{1,2,\ldots\}$, $-\infty < S < t_0 < T < \infty$, and $u \in \bH_{p,0}^{\alpha,k}\left( (t_0,T) \times \Omega \right)$.
If $u$ is extended to be zero for $t \leq t_0$, denoted by $\bar{u}$, then $\bar{u} \in \bH_{p,0}^{\alpha,k}\left((S,T) \times \Omega\right)$.
\end{lemma}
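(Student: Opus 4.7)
The plan has two parts: verify $\bar u \in \bH_p^{\alpha,k}((S,T)\times\Omega)$ directly from the assumption $u\in \bH_{p,0}^{\alpha,k}((t_0,T)\times\Omega)$, and then construct a sequence in $C^\infty([S,T]\times\Omega)$ vanishing at $t=S$ and for large $|x|$ that converges to $\bar u$, by successively shifting in time, mollifying in time with a one-sided mollifier, and applying a spatial cutoff.

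For the first part, direct computation shows that
$$
I_S^{1-\alpha}\bar u(t,x) = \begin{cases} 0, & t\in (S,t_0], \\ I_{t_0}^{1-\alpha}u(t,x), & t\in (t_0,T), \end{cases}
$$
which is continuous at $t_0$ since $I_{t_0}^{1-\alpha}u(t,x)\to 0$ as $t\to t_0^+$. For any $\varphi\in C_0^\infty([S,T)\times\Omega)$, its restriction to $(t_0,T)\times\Omega$ lies in $C_0^\infty([t_0,T)\times\Omega)$, so the integration by parts identity for $u\in \bH_{p,0}^{\alpha,k}((t_0,T)\times\Omega)$ yields
$$
\int_S^T\!\int_\Omega I_S^{1-\alpha}\bar u\cdot \partial_t\varphi\,dx\,dt = \int_{t_0}^T\!\int_\Omega I_{t_0}^{1-\alpha}u\cdot \partial_t\varphi\,dx\,dt = -\int_{t_0}^T\!\int_\Omega D_t^\alpha u \cdot \varphi\,dx\,dt.
$$
Hence $\bar u\in \bH_p^{\alpha,k}((S,T)\times\Omega)$ with $D_t^\alpha\bar u = D_t^\alpha u$ on $(t_0,T)\times\Omega$ and zero on $(S,t_0)\times\Omega$, and spatial derivatives extend trivially by zero.

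For the approximation, fix $\delta\in (0,t_0-S)$ and set $\bar u_\delta(t,x)=\bar u(t-\delta,x)$, understood as zero for $t<S$. Then $\bar u_\delta$ vanishes on the open set $(S-\delta, t_0+\delta)$, providing a full neighborhood of $\{S\}\times\Omega$ to work in. A change of variable exploiting the vanishing of $\bar u$ for $t<S$ gives $I_S^{1-\alpha}\bar u_\delta(t,x) = I_S^{1-\alpha}\bar u(t-\delta,x)$, hence $D_t^\alpha \bar u_\delta(t,x) = (D_t^\alpha\bar u)(t-\delta,x)$, so $L_p$-continuity of translations yields $\bar u_\delta\to \bar u$ in $\bH_p^{\alpha,k}((S,T)\times\Omega)$ as $\delta\to 0$. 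For a mollifier $\rho_\eta\in C_0^\infty((0,\eta))$ in time with $\eta<\delta$, set
$$
(\bar u_\delta)_\eta(t,x) = \int_0^\eta \bar u_\delta(t-\tau,x)\,\rho_\eta(\tau)\,d\tau.
$$
This is $C^\infty$ and vanishes on $[S, t_0+\delta-\eta]$, in particular at $t=S$. A Fubini change-of-variable computation analogous to the derivation of \eqref{eq0124_01} in Proposition \ref{prop0120_1} gives the commutation identity $I_S^{1-\alpha}(\bar u_\delta)_\eta = (I_S^{1-\alpha}\bar u_\delta)*\rho_\eta$, so $D_t^\alpha(\bar u_\delta)_\eta = (D_t^\alpha\bar u_\delta)*\rho_\eta \to D_t^\alpha\bar u_\delta$ in $L_p((S,T)\times\Omega)$ as $\eta\to 0$.

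Finally, multiplying by a spatial cutoff $\zeta_\mu(x)=\zeta(x/\mu)$ with $\zeta\in C_0^\infty(\bR^d)$ equal to $1$ on $B_1$ produces $w_{\delta,\eta,\mu}=(\bar u_\delta)_\eta\,\zeta_\mu$ in $C^\infty([S,T]\times\Omega)$, vanishing for $|x|>2\mu$ and at $t=S$; since $\zeta_\mu$ is $t$-independent, $D_t^\alpha$ commutes with this multiplication, giving $w_{\delta,\eta,\mu}\to (\bar u_\delta)_\eta$ in $\bH_p^{\alpha,k}$ as $\mu\to\infty$. A diagonal selection in $(\delta,\eta,\mu)$ produces the approximating sequence, so $\bar u\in \bH_{p,0}^{\alpha,k}((S,T)\times\Omega)$. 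The main technical obstacle is the nonlocal nature of $I_S^{1-\alpha}$: both the translation identity and the commutation with time mollification rely crucially on the vanishing of $\bar u$ for $t<S$, which allows rewriting $I_S^{1-\alpha}$ as a convolution on the line and freely commuting it with translations and mollifications.
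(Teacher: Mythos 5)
Your first part, establishing $\bar u\in\bH_p^{\alpha,k}((S,T)\times\Omega)$ directly from the weak formulation, is correct, and in fact a bit more streamlined than the paper's route (the paper establishes the same membership for the smooth extensions $\bar u_n$ and passes to the limit via a Cauchy-sequence argument).

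The second part has a genuine gap. You assert that $w_{\delta,\eta,\mu}=(\bar u_\delta)_\eta\,\zeta_\mu$ lies in $C^\infty([S,T]\times\Omega)$, but this is not the case: $(\bar u_\delta)_\eta$ is obtained by translating $\bar u$ in time and convolving with a one-sided time mollifier, neither of which improves regularity in $x$, and the spatial cutoff $\zeta_\mu$ only localizes the support. So $w_{\delta,\eta,\mu}$ is $C^\infty$ in $t$ but merely $W^{k,p}$ in $x$, and therefore is not an admissible member of the approximating class in the definition of $\bH_{p,0}^{\alpha,k}((S,T)\times\Omega)$. To repair the argument you need an additional spatial mollification step, and for a general domain $\Omega$ one must also take care near $\partial\Omega$ (e.g.\ via a partition of unity, as the paper invokes for Proposition~\ref{prop0120_1}).

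The paper avoids this difficulty by approaching the construction in the opposite order: it begins with the approximating sequence $u_n\in C^\infty([t_0,T]\times\Omega)$ with $u_n(t_0,\cdot)=0$ that the definition of $\bH_{p,0}^{\alpha,k}((t_0,T)\times\Omega)$ already supplies, extends each $u_n$ by zero to $\bar u_n$, and then mollifies $\bar u_n$. Because $u_n$ is already $C^\infty$ in both variables, $\bar u_n$ is $C^\infty$ in $x$ for every $t$, and the only irregularity to smooth out is in $t$ near the junction $t=t_0$. This sidesteps the spatial regularity issue that your proof overlooks. Your approach can be salvaged by inserting a spatial mollification between the time mollification and the cutoff, but as written the approximating sequence does not satisfy the required smoothness.
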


\begin{proof}
Without loss of generality, we assume $t_0 = 0$ so that
$$
- \infty < S < 0 < T < \infty.
$$
For $u \in \bH_{p,0}^{\alpha,k}(\Omega_T)$,
let $\{u_n\}$ be an approximating sequence of $u$ such that $u_n \in \bH_{p,0}^{\alpha,k}(\Omega_T) \cap C^\infty\left([0,T] \times \Omega\right)$, $u_n$ vanishes for large $|x|$, and $u_n(0,x) = 0$.
Extend $u_n$ to be zero for $t \leq 0$, denoted by $\bar{u}_n$.
It is readily seen that, for $0 \leq |\beta| \leq k$,
$$
D_x^\beta \bar{u}_n = \left\{
\begin{aligned}
D_x^\beta u_n, \quad 0 \leq t \leq T,
\\
0, \quad S \leq t < 0,
\end{aligned}
\right.
$$
$$
D^\beta_x \bar{u}_n \in L_p\left((S,T) \times \Omega \right).
$$
Now we check that
\begin{equation}
							\label{eq0120_03}
D_t^\alpha \bar{u}_n = \partial_t I_S^{1-\alpha} \bar{u}_n =
\left\{
\begin{aligned}
\partial_t I_0^{1-\alpha} u_n, \quad 0 \leq t \leq T,
\\
0, \quad S \leq t < 0,
\end{aligned}
\right.
\end{equation}
$$
D_t^\alpha \bar{u}_n \in L_p\left((S,T) \times \Omega\right).
$$
To see this, note that $I_S^{1-\alpha}\bar{u}_n(t,x) = 0$ for $S \leq t < 0$.
For $0 \leq t \leq T$, we have
\begin{align*}
&I_S^{1-\alpha} \bar{u}_n = \frac{1}{\Gamma(1-\alpha)} \int_S^t (t-s)^{-\alpha} \bar{u}_n(s,x) \, dy\\
&= \frac{1}{\Gamma(1-\alpha)} \int_0^t (t-s)^{-\alpha} u_n(s,x) \, dy
= I_0^{1-\alpha} u_n(t,x).
\end{align*}
We now observe that, for $\varphi \in C_0^\infty\left( (S,T) \times \Omega\right)$,
\begin{align*}
&\int_S^T \int_\Omega I_S^{1-\alpha} \bar{u}_n(t,x) \varphi_t(t,x) \, dx \, dt = \int_0^T \int_\Omega I_0^{1-\alpha} u_n (t,x) \varphi_t(t,x) \, dx \, dt\\
&= - \int_0^T \int_\Omega \partial_t I_0^{1-\alpha} u_n(t,x) \varphi(t,x) \, dx \, dt,
\end{align*}
where we used the fact that $I_0^{1-\alpha}u_n(0,x) = 0$.
This proves \eqref{eq0120_03}

Since $\{\bar{u}_n\}$ is Cauchy in $\bH_p^{\alpha,k}\left((S,T) \times \Omega\right)$ and $\bar{u}_n \to \bar{u}$ in $L_p\left((S,T) \times \Omega\right)$, we see that $\bar{u} \in \bH_p^{\alpha,k}\left((S,T) \times \Omega\right)$.
Moreover, since $\bar{u}_n(S,x) = 0$, $\bar{u} \in \bH_{p,0}^{\alpha,k}\left((S,T) \times \Omega\right)$.
In fact, $\bar{u}_n$'s are not necessarily in $C^\infty\left( [S,T] \times \Omega \right)$, but by using mollifications from $\bar{u}_n$ one can easily obtain $v_n \in C^\infty\left( [S,T] \times \Omega \right)$ vanishing for large $|x|$ such that $v_n(S,x) = 0$ and
$$
v_n \to \bar{u} \quad \text{in} \quad \bH_p^{\alpha,k}\left((S,T) \times \Omega\right).
$$
The lemma is proved.
\end{proof}

\begin{lemma}
							\label{lem0207_1}
Let $p \in [1,\infty)$, $\alpha \in (0,1)$, $k \in \{1,2,\ldots\}$, $-\infty < S < t_0 < T < \infty$, and $v \in \bH_p^{\alpha,k}\left((S,T) \times \Omega \right)$.
Then, for any infinitely differentiable function $\eta$ defined on $\bR$ such that $\eta(t)=0$ for $t \leq t_0$ and
$$
|\eta'(t)| \le M, \quad t \in \bR,
$$
the function $\eta v$ belongs to $\bH_{p,0}^{\alpha,k}\left( (t_0,T) \times \Omega \right)$ and
\begin{equation}
                    \label{eq9.45}
\partial_t^\alpha (\eta v)(t,x) = \partial_t I_{t_0}^{1-\alpha} (\eta
v)(t,x) = \eta(t) \partial_t I_S^{1-\alpha} v (t,x) - g(t,x),
\end{equation}
for $(t,x) \in (t_0,T) \times \Omega$,
where
\begin{equation}
							\label{eq0207_04}
g(t,x) = \frac{\alpha}{\Gamma(1-\alpha)} \int_S^t (t-s)^{-\alpha-1} \left(\eta(s) - \eta(t)\right) v(s,x) \, ds
\end{equation}
satisfies
\begin{equation}
							\label{eq0207_01}
\|g\|_{L_p\left((t_0,T) \times \Omega\right)} \le N(\alpha, p, M, T, S) \|v\|_{L_p\left( (S,T) \times \Omega\right)}.
\end{equation}
\end{lemma}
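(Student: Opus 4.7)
My plan is to first prove the identity \eqref{eq9.45} for smooth $v$ by direct computation, then pass to the limit using Proposition \ref{prop0120_1} and the kernel estimate \eqref{eq0207_01}.

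\textbf{Step 1: Verify \eqref{eq9.45} for smooth $v$.} Assume first that $v \in C^\infty([S,T]\times\Omega)$ vanishes for large $|x|$. Since $\eta(s)=0$ for $s\le t_0$, I can extend the lower limit of $I_{t_0}^{1-\alpha}(\eta v)$ from $t_0$ to $S$ without change. Splitting $\eta(s) = \eta(t) + (\eta(s)-\eta(t))$ inside the defining integral gives the decomposition
\begin{equation*}
I_{t_0}^{1-\alpha}(\eta v)(t,x) = \eta(t) I_S^{1-\alpha} v(t,x) + h(t,x),
\end{equation*}
where $h(t,x) = \frac{1}{\Gamma(1-\alpha)}\int_S^t (t-s)^{-\alpha}(\eta(s)-\eta(t)) v(s,x)\,ds$. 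Since $|\eta(s)-\eta(t)|\le M(t-s)$, the integrand is bounded near $s=t$ and I may differentiate under the integral. The boundary term at $s=t$ vanishes and I obtain
\begin{equation*}
\partial_t h = -\frac{\alpha}{\Gamma(1-\alpha)}\int_S^t (t-s)^{-\alpha-1}(\eta(s)-\eta(t))v(s,x)\,ds - \eta'(t) I_S^{1-\alpha} v = -g - \eta' I_S^{1-\alpha} v.
\end{equation*}
Combining the two displays yields $\partial_t I_{t_0}^{1-\alpha}(\eta v) = \eta(t)\partial_t I_S^{1-\alpha} v - g$, which is exactly \eqref{eq9.45}.

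\textbf{Step 2: The $L_p$ estimate \eqref{eq0207_01}.} Using $|\eta(s)-\eta(t)|\le M(t-s)$ pointwise gives
\begin{equation*}
|g(t,x)| \le \frac{\alpha M}{\Gamma(1-\alpha)}\int_S^t (t-s)^{-\alpha} |v(s,x)|\,ds = (K \ast |v(\cdot,x)|\mathbf{1}_{(S,T)})(t),
\end{equation*}
where $K(r) = \frac{\alpha M}{\Gamma(1-\alpha)} r^{-\alpha}\mathbf{1}_{0<r<T-S}$. Since $\alpha\in(0,1)$, $K\in L_1(\bR)$ with $\|K\|_{L_1}\le N(\alpha,M,T-S)$, so Young's inequality in $t$ followed by integration in $x$ gives \eqref{eq0207_01}.

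\textbf{Step 3: Approximation and membership in $\bH_{p,0}^{\alpha,k}((t_0,T)\times\Omega)$.} By Proposition \ref{prop0120_1}, pick $v_n\in C^\infty([S,T]\times\Omega)$ vanishing for large $|x|$ with $v_n\to v$ in $\bH_p^{\alpha,k}((S,T)\times\Omega)$. Let $w_n = \eta v_n$. Each $w_n$ lies in $C^\infty([t_0,T]\times\Omega)$, vanishes for large $|x|$, and satisfies $w_n(t_0,\cdot)=0$. Spatial derivatives commute with multiplication by $\eta(t)$, so $D_x^\beta w_n \to D_x^\beta (\eta v)$ in $L_p$ for $|\beta|\le k$. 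Applying Step 1 to $v_n$ and the bound from Step 2 to $v_n-v_m$ shows $\{D_t^\alpha w_n\}$ is Cauchy in $L_p((t_0,T)\times\Omega)$ with limit $\eta\,\partial_t I_S^{1-\alpha} v - g$. Hence $\eta v\in \bH_{p,0}^{\alpha,k}((t_0,T)\times\Omega)$ and the identity \eqref{eq9.45} passes to the limit, completing the proof.

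The main obstacle is the singularity of the kernel $(t-s)^{-\alpha-1}$ at $s=t$ in the definition of $g$; this is the reason \eqref{eq9.45} is nontrivial. The saving grace is that the factor $\eta(s)-\eta(t)$ is $O(t-s)$, which reduces the effective singularity to the integrable order $(t-s)^{-\alpha}$ and permits both the justification of differentiating under the integral in Step 1 and the $L_1$-kernel Young estimate in Step 2.
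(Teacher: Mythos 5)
Your proof is correct and takes essentially the same approach as the paper. Both arguments rest on the decomposition $\eta(s) = \eta(t) + (\eta(s)-\eta(t))$ inside the $(1-\alpha)$-integral (exploiting $\eta \equiv 0$ for $t\le t_0$ to extend the lower limit to $S$), the Lipschitz bound $|\eta(s)-\eta(t)|\le M(t-s)$ to reduce the kernel singularity from $(t-s)^{-\alpha-1}$ to the integrable order $(t-s)^{-\alpha}$, and approximation by smooth functions via Proposition~\ref{prop0120_1}. The only differences are organizational: you first establish the pointwise identity \eqref{eq9.45} for a single smooth $v$ and then mollify, whereas the paper carries out the same algebra directly on the commutator $g_n = \eta\,\partial_t I_S^{1-\alpha}v_n - \partial_t I_{t_0}^{1-\alpha}(\eta v_n)$ for the approximating sequence; and you invoke Young's convolution inequality for \eqref{eq0207_01}, while the paper cites Lemma~\ref{lem1018_01} and Remark~\ref{rem0120_1}, which encode the same $L_1$-kernel estimate for $I_S^{1-\alpha}$.
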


\begin{proof}
As in Lemma \ref{lem0206_1}, we assume that $t_0 = 0$.
First we check \eqref{eq0207_01}.
Note that since $|\eta'(t)| \leq M$, we have
\begin{align*}
&\left| \int_S^t (t-s)^{-\alpha-1} \left(\eta(t) - \eta(s) \right) v(s,x) \, ds \right|\\
&\leq M \int_S^t (t-s)^{-\alpha}|v(s,x)| \, ds = M \Gamma(1-\alpha) I^{1-\alpha}_S |v(t,x)|
\end{align*}
for $(t,x) \in \Omega_T$.
Hence, the inequality \eqref{eq0207_01} follows from  Lemma \ref{lem1018_01} with $1-\alpha$ in place of $\alpha$ (also see Remark \ref{rem0120_1}).

Since $v \in \bH_p^{\alpha,2}\left((S,T) \times \Omega \right)$, there exists a sequence $\{v_n\} \subset \bH_p^{\alpha,2}\left((S,T) \times \Omega \right) \cap C^\infty\left([S,T] \times \Omega \right)$ such that $v_n$ vanishes for large $|x|$ and
$$
\|\partial_t I_S^{1-\alpha} (v_n - v) \|_{L_p\left((S,T) \times \Omega \right)} + \sum_{0 \leq |\beta| \leq 2} \|D_x^\beta(v_n - v)\|_{L_p\left((S,T) \times \Omega\right)} \to 0
$$
as $n \to \infty$.
Let
$$
g_n(t,x)= \eta(t) \partial_t I_S^{1-\alpha} v_n (t,x) - \partial_t I_0^{1-\alpha} (\eta v_n) (t,x).
$$
Then
\begin{align*}
&- \Gamma(1-\alpha) g_n(t,x)\\
&= \partial_t \int_0^t (t-s)^{-\alpha} \eta(s) v_n(s,x) \, ds - \eta(t) \partial_t \int_S^t (t-s)^{-\alpha} v_n(s,x) \, ds\\
&= \frac{\partial}{\partial t}\left[\int_0^t (t-s)^{-\alpha} \eta(s) v_n(s,x) \, ds - \eta(t) \int_S^t (t-s)^{-\alpha} v_n(s,x) \, ds  \right]\\
&\qquad + \eta'(t) \int_S^t (t-s)^{-\alpha} v_n(s,x) \, ds\\
&= \frac{\partial}{\partial t} \left[ \int_S^t (t-s)^{-\alpha} \left(\eta(s) - \eta(t)\right) v_n(s,x) \, ds \right] + \eta'(t) \int_S^t (t-s)^{-\alpha} v_n(s,x) \, ds\\
&= -\alpha \int_S^t (t-s)^{-\alpha-1} \left(\eta(s) - \eta(t)\right) v_n(s,x) \, ds.
\end{align*}
Hence,
$$
g_n(t,x) = \frac{\alpha}{\Gamma(1-\alpha)} \int_S^t (t-s)^{-\alpha-1} \left( \eta(s) - \eta(t) \right) v_n(s,x) \, ds
$$
for $(t,x) \in \Omega_T$.
Clearly,
$$
\eta(t) \partial_t I_S^{1-\alpha} v_n(t,x) \to \eta(t) \partial_t I_S^{1-\alpha} v(t,x)
$$
in $L_p(\Omega_T)$.
From the estimate for $g$ with $v_n - v$ in place of $v$, it follows that
$$
\left\| g_n - g \right\|_{L_p(\Omega_T)} \to 0
$$
as $n \to \infty$.
That is, $$
\partial_t I_0^{1-\alpha}(\eta v_n)(t,x) \to \eta(t) \partial_t I_S^{1-\alpha} v(t,x) - g(t,x)
$$
in $L_p(\Omega_T)$.
This together with $I_0^{1-\alpha} (\eta v_n) \to I_0^{1-\alpha} (\eta v)$ in $L_p(\Omega_T)$
implies \eqref{eq9.45} and $\partial_t I_0^{1-\alpha}(\eta v_n)(t,x) \to \partial_t I_0^{1-\alpha}(\eta v)(t,x)$ in $L_p(\Omega_T)$.
Obviously, $D_x^{\beta} (\eta v_n) \to D_x^\beta (\eta v)$ in $L_p(\Omega_T)$ for $0 \leq |\beta| \leq k$.
Then from the fact that $\eta v_n \in C_0^\infty\left([0,T] \times \Omega \right)$ vanishing for large $|x|$ with $(\eta v_n)(0,x) = 0$, we conclude that $\eta v \in \bH_{p,0}^{\alpha,k}(\Omega_T)$.
\end{proof}

\section{Auxiliary results}
                                    \label{sec4}
Throughout this section, we assume that $a^{ij}$ are measurable functions of only $t \in \bR$. That is,
$a^{ij} = a^{ij}(t)$.

\begin{proposition}
							\label{prop0720_1}
Theorem \ref{thm0412_1} holds when $p=2$.
\end{proposition}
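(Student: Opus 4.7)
The plan is to prove the a priori estimate by testing the equation against $\Delta u$, and to deduce existence by the method of continuity. Throughout, I would first carry out all computations on the smooth approximating sequence $\{u_n\} \subset C^\infty([0,T]\times \bR^d)$ with $u_n(0,x)=0$ and $u_n$ vanishing for large $|x|$, whose existence follows directly from the definition of $\bH_{2,0}^{\alpha,2}(\bR^d_T)$, and then pass to the limit.

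\textbf{A priori estimate.} The core computation is to multiply \eqref{eq0411_03} by $\Delta u$ and integrate over $\bR^d_T$. Since $a^{ij}$ depends only on $t$, for each fixed $t$ and each index $k$, integrating by parts twice in $x$ gives
$$
\int_{\bR^d} a^{ij}(t) D_{ij} u \cdot D_{kk} u \, dx = \int_{\bR^d} a^{ij}(t) D_{ik} u \cdot D_{jk} u \, dx.
$$
Summing over $k$ and invoking uniform ellipticity (applied with $\zeta_i := D_{ik}u$ for each $k$) yields the coercivity
$$
\int_{\bR^d_T} a^{ij} D_{ij} u \cdot \Delta u \, dx \, dt \;\geq\; \delta \|D^2 u\|_{L_2(\bR^d_T)}^2.
$$
For the time-fractional term, integration by parts in $x$ gives
$$
-\int_{\bR^d} \partial_t^\alpha u \cdot \Delta u \, dx \;=\; \sum_k \int_{\bR^d} \partial_t^\alpha(D_k u) \cdot D_k u \, dx,
$$
and I would then invoke the pointwise identity, valid when $v(0)=0$,
$$
v \partial_t^\alpha v \;=\; \tfrac{1}{2} \partial_t^\alpha(v^2) + \frac{\alpha}{2\Gamma(1-\alpha)} \int_0^t \frac{(v(t)-v(s))^2}{(t-s)^{\alpha+1}}\, ds + \frac{v^2(t)}{2\Gamma(1-\alpha) t^\alpha},
$$
applied with $v = D_k u$. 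Integrating in $t$, the first term contributes $\tfrac{1}{2} I^{1-\alpha}(v^2)(T) \ge 0$ and the other two are pointwise nonnegative, so the fractional time contribution is nonnegative. Combining with Cauchy--Schwarz on $\int f \cdot \Delta u \, dx \, dt$ yields $\|D^2 u\|_{L_2(\bR^d_T)} \le N \|f\|_{L_2(\bR^d_T)}$, and the bound on $\|\partial_t^\alpha u\|_{L_2}$ follows from the equation itself.

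\textbf{Existence and uniqueness.} Uniqueness is immediate from the a priori estimate applied to $u_1 - u_2$. For existence I would use the method of continuity: set
$$
L_\tau u := -\partial_t^\alpha u + \left((1-\tau)\delta^{ij} + \tau a^{ij}(t)\right) D_{ij} u, \qquad \tau \in [0,1].
$$
Each coefficient family satisfies the same ellipticity bounds, so the a priori estimate above holds for $L_\tau$ with a constant independent of $\tau$. It therefore suffices to produce a solution at $\tau=0$, i.e., for $-\partial_t^\alpha u + \Delta u = f$. For this baseline problem I would take the Fourier transform in $x$, reducing to the scalar fractional ODE $-\partial_t^\alpha \hat u(t,\xi) - |\xi|^2 \hat u(t,\xi) = \hat f(t,\xi)$ with zero initial condition, solve it explicitly via the Mittag--Leffler resolvent, and then use Plancherel together with the already established a priori estimate to conclude $u \in \bH_{2,0}^{\alpha,2}(\bR^d_T)$. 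A density argument handles general $f \in L_2(\bR^d_T)$.

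\textbf{Main obstacle.} The delicate point is the rigorous justification of the pointwise positivity identity and of the spatial integrations by parts at the level of $\bH_{2,0}^{\alpha,2}$ regularity, where $\partial_t^\alpha u$ exists only in the distributional sense of \eqref{eq0122_01}. I would handle this by performing every computation on the approximating sequence $\{u_n\}$ furnished by the definition of $\bH_{2,0}^{\alpha,2}(\bR^d_T)$, where the identities hold classically, and then pass to the limit using continuity of each term in the $\bH_{2,0}^{\alpha,2}$ topology; Fatou's lemma ensures that the nonnegative contributions remain nonnegative after passing to the limit, which is all that is needed for the inequality to survive.
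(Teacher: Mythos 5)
Your proof is correct and follows the same overall strategy as the paper: approximate by smooth functions vanishing at $t=0$ and for large $|x|$, test the equation with $\Delta u$, use uniform ellipticity to obtain coercivity in $D^2 u$, prove nonnegativity of the fractional time contribution, and then pass to existence via the method of continuity. The one place where your route differs in substance is the positivity of $\int_{\bR^d_T}\partial_t^\alpha v\cdot v\,dx\,dt$: you invoke the exact Alikhanov-type identity with the two explicit nonnegative remainder terms, whereas the paper proves only the pointwise inequality $\partial_t^\alpha v\cdot v\ge\tfrac12\partial_t^\alpha|v|^2$ by introducing the auxiliary function $F(s)=\tfrac12|v(s,x)-v(t,x)|^2\ge0$ and integrating $\int_0^t(t-s)^{-\alpha}F'(s)\,ds$ by parts, and then integrates the resulting total $t$-derivative over $(0,T)$. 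These are really the same integration by parts with different bookkeeping — the two nonnegative terms you retain are precisely what the paper discards after observing their sign — so the arguments are equivalent in content; your identity is slightly more transparent, the paper's trick is slightly shorter. For existence at the endpoint $\tau=0$ of the continuity path you sketch a Fourier/Mittag--Leffler construction, while the paper simply cites \cite{MR3581300}; both are sound, and the citation is merely invoking the representation-formula theory that your sketch would reproduce.
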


\begin{proof}
A version of this result for divergence type equations can be found in \cite{MR2538276}.
Roughly speaking, the results in this proposition can be obtained by taking the spatial derivatives of the equation in \cite{MR2538276}.
For the reader's convenience, we present here a detailed proof.

By the results from \cite{MR3581300} and the method of continuity, we only prove the a priori estimate \eqref{eq0411_04}.
Moreover, since infinitely differentiable functions with compact support in $x$ and with the zero initial condition are dense in $\bH_{2,0}^{\alpha,2}(\bR^d_T)$, it suffices to prove \eqref{eq0411_04} for $u$ in $C_0^\infty\left([0,T] \times \bR^d\right)$ satisfying $u(0,x) =0$ and \eqref{eq0411_03}.
Multiplying both sides of \eqref{eq0411_03} by $\Delta u$ and then integrating on $(0,T) \times \bR^d$, we have
\begin{equation}
							\label{eq0125_02}
- \int_{\bR^d_T} \partial_t^\alpha u \Delta u \, dx \, dt + \int_{\bR^d_T} a^{ij}(t) D_{ij} u \Delta u \, dx \, dt = \int_{\bR^d_T} f \Delta u \, dx \, dt.
\end{equation}
By integration by parts and the ellipticity condition, it follows that
\begin{align*}
&\int_{\bR^d_T} a^{ij}(t) D_{ij} u \Delta u \, dx \, dt = \int_{\bR^d_T} \sum_{k=1}^d\sum_{i,j=1}^d a^{ij}(t) D_{ij} u D_k^2 u \, dx \, dt\\
&= \int_{\bR^d_T} \sum_{k=1}^d \sum_{i,j=1}^d a^{ij}(t) D_{ki} D_{kj}u \, dx \, dt
\geq \delta \int_{\bR^d_T} \sum_{i,k = 1}^d |D_{ki}u|^2 \, dx \, dt.
\end{align*}
The term on the right-hand side of \eqref{eq0125_02} is taken care of by Young's inequality.
Moreover, the estimate for the term $\partial_t^\alpha u$ follows from that of $D^2u$ and the equation.
Thus, to obtain \eqref{eq0411_04} we only need to see that the first integral in \eqref{eq0125_02} is non-negative.
To do this, by setting $\nabla u = v$, we have
$$
- \int_{\bR^d_T} \partial_t^\alpha u \, \Delta u \, dx \, dt = \int_{\bR^d_T} \partial_t^\alpha v \cdot v \, dx \, dt.
$$
We claim that, for each $(t,x) \in \bR^d_T$,
\begin{equation}
							\label{eq0904_01}
\partial_t^\alpha v(t,x) \cdot v(t,x)
\ge \frac{1}{2} \partial_t^\alpha |v|^2(t,x) .
\end{equation}
To see this, for fixed $t\in (0,T)$ and $x\in \bR^d$, let
$$
F_1(s)=\frac 1 2 |v(s,x)|^2,\quad F_2(s)=v(s,x)\cdot v(t,x),
$$
and
$$
F(s)=\frac 1 2 (|v(s,x)|^2-|v(t,x)|^2)-(v(s,x)-v(t,x))\cdot v(t,x).
$$
Because
$$
F(s)=\frac 1 2|v(s,x)-v(t,x)|^2\ge 0
$$
on $[0,T]$ with the equality at $s=t$, integration by parts clearly yields that
$$
\int_0^t (t-s)^{-\alpha}(F_1'(s)-F_2'(s))\,ds
=\int_0^t (t-s)^{-\alpha}F'(s)\,ds\le 0,
$$
which together with the definition of $\partial_t^\alpha$ implies \eqref{eq0904_01}.
Therefore, because $F_1(0)=0$ we have
\begin{align*}
&2 \Gamma(1-\alpha)\int_0^T\partial_t^\alpha v(t,x) \cdot v(t,x)\,dt\\
&\geq \int_0^T \frac{\partial}{\partial t}\left[ \int_0^t (t-s)^{-\alpha} |v(s,x)|^2 \, ds \right] \, dt =
\left[\int_0^t (t-s)^{-\alpha} |v(s,x)|^2 \, ds\right]_{t=0}^{t=T}\\
&= \int_0^T (T-s)^{-\alpha} |v(s,x)|^2 \, ds \geq 0,
\end{align*}
where we used the fact that $v(s,x)$ is bounded on $[0,T] \times \bR^d$ so that
\begin{align*}
&\int_0^t (t-s)^{-\alpha} |v(s,x)|^2 \, ds
= \int_0^1 (t - tr)^{-\alpha} |v(tr,x)|^2 t \, dr\\
&= t^{1-\alpha} \int_0^1 (1-r)^{-\alpha} |v(tr,x)|^2 \, dr \to 0
\end{align*}
as $t \to 0$.
\end{proof}

\begin{lemma}[Local estimate]
							\label{lem0731_1}
Let $p\in (1,\infty)$, $\alpha \in (0,1)$, $T \in (0,\infty)$, and $0 < r < R < \infty$.
If Theorem \ref{thm0412_1} holds with this $p$ and $v \in \bH_{p,0}^{\alpha,2}\left((0, T) \times B_R\right)$ satisfies
$$
-\partial_t^\alpha v + a^{ij}(t) D_{ij} v
= f
$$
in $(0,T) \times B_R$, then
\begin{align*}
&\| \partial_t^\alpha v \|_{L_p\left((0,T) \times B_r\right)} + \| D^2 v\|_{L_p\left((0,T) \times B_r\right)} \\
&\le \frac{N}{(R-r)^2} \|v\|_{L_p\left((0,T) \times B_R\right)} + N \|f\|_{L_p\left((0,T) \times B_R\right)},
\end{align*}
where $N = N(d,\delta,\alpha,p)$.
\end{lemma}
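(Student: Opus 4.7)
The plan is to reduce this local estimate to the global estimate of Theorem~\ref{thm0412_1} via a cutoff argument, combined with a scale-invariant Sobolev interpolation and an iteration over nested balls that absorbs the first-order gradient term produced by the cutoff.

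First I would introduce a geometric sequence of radii $r_k = r + (R-r)(1-2^{-k})$, so $r_0 = r$, $r_k\nearrow R$, and $r_{k+1}-r_k = 2^{-k-1}(R-r)$, together with cutoffs $\zeta_k \in C_0^\infty(B_{r_{k+1}})$ with $\zeta_k = 1$ on $B_{r_k}$, $|D\zeta_k| \le N2^k(R-r)^{-1}$, and $|D^2\zeta_k| \le N4^k(R-r)^{-2}$. Since $\zeta_k$ is time-independent, $\partial_t^\alpha(\zeta_k v) = \zeta_k \partial_t^\alpha v$, and an approximation argument (combined with Lemma~\ref{lem0206_1}) shows that $w_k := \zeta_k v$, extended by zero outside $B_R$, belongs to $\bH_{p,0}^{\alpha,2}(\bR^d_T)$ and satisfies on $\bR^d_T$
\begin{equation*}
-\partial_t^\alpha w_k + a^{ij}(t) D_{ij} w_k = \zeta_k f + 2a^{ij} D_j\zeta_k\, D_i v + a^{ij} v\, D_{ij}\zeta_k =: F_k.
\end{equation*}
Applying Theorem~\ref{thm0412_1} to $w_k$ and restricting to $B_{r_k}$, where $w_k=v$, yields with $\Psi_k := \|\partial_t^\alpha v\|_{L_p((0,T)\times B_{r_k})} + \|D^2 v\|_{L_p((0,T)\times B_{r_k})}$,
\begin{equation*}
\Psi_k \le N\|f\|_{L_p((0,T)\times B_R)} + \tfrac{N\,2^k}{R-r}\|Dv\|_{L_p((0,T)\times B_{r_{k+1}})} + \tfrac{N\,4^k}{(R-r)^2}\|v\|_{L_p((0,T)\times B_R)}.
\end{equation*}

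Next I would invoke the scale-invariant interpolation $\|Du\|_{L_p(B_\rho)} \le \epsilon\|D^2 u\|_{L_p(B_\rho)} + (N_0/\epsilon)\|u\|_{L_p(B_\rho)}$ (obtained from the standard Gagliardo--Nirenberg inequality on $B_1$ by rescaling), applied slice-wise in $t$ on $B_{r_{k+1}}$ and integrated in $t$ via Minkowski's inequality. Choosing $\epsilon = \theta(R-r)2^{-k}/(NN_0)$ with a fixed $\theta \in (0,1/4)$ converts the middle term into $\theta\,\Psi_{k+1} + N_\theta 4^k(R-r)^{-2}\|v\|_{L_p((0,T)\times B_R)}$, giving the recursion
\begin{equation*}
\Psi_k \le \theta\,\Psi_{k+1} + N\|f\|_{L_p((0,T)\times B_R)} + \tfrac{N' 4^k}{(R-r)^2}\|v\|_{L_p((0,T)\times B_R)}.
\end{equation*}
Taking $\theta = 1/8$ so that $4\theta = 1/2 < 1$ and iterating $M$ times produces $\Psi_0 \le N\|f\|_{L_p((0,T)\times B_R)} + N(R-r)^{-2}\|v\|_{L_p((0,T)\times B_R)} + \theta^M \Psi_M$; since $\Psi_M$ is controlled by the finite norm $\|v\|_{\bH_p^{\alpha,2}((0,T)\times B_R)}$, the residual $\theta^M \Psi_M$ tends to zero as $M\to\infty$, yielding the claimed estimate.

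The main obstacle is the gradient term $\|Dv\|_{L_p(B_{r_{k+1}})}$ in $F_k$: its prefactor is only $(R-r)^{-1}$ and it sits on a strictly larger ball than $\Psi_k$, so a one-shot cutoff followed by a generic interpolation would leave an unabsorbable multiplicative coefficient on $\|D^2v\|$. The essential balancing is to tie the interpolation parameter $\epsilon$ to the cutoff scale $(R-r)2^{-k}$ with a constant small enough that the coefficient of $\Psi_{k+1}$ stays strictly below $1/4$; this precisely matches the $4^k$ growth coming from $|D^2\zeta_k|$ so that the resulting geometric series summing the iteration converges.
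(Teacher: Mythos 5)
Your proposal is correct and follows essentially the same strategy as the paper's proof: dyadic radii $r_k$, spatial cutoffs $\zeta_k$, application of the global estimate of Theorem~\ref{thm0412_1} to $\zeta_k v$, interpolation to absorb the gradient term produced by the cutoff, and a geometric iteration tuned so that the $4^k$ growth from $D^2\zeta_k$ is dominated. The only cosmetic differences are that you iterate the recursion directly and send the tail $\theta^M\Psi_M\to 0$ where the paper multiplies by $\varepsilon^k$ and sums the resulting geometric series, and that you interpolate for $v$ on $B_{r_{k+1}}$ (a ball, hence a mild restriction on $\epsilon$) where the paper interpolates for the compactly supported $v\zeta_{k+1}$ on all of $\bR^d$, with no restriction.
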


\begin{proof}
Set
$$
r_0 = r, \quad r_k = r+(R-r)\sum_{j=1}^k \frac{1}{2^j}, \quad k = 1, 2, \ldots.
$$
Let $\zeta_k = \zeta_k(x)$ be an infinitely differentiable function defined on $\bR^d$ such that
$$
\zeta_k = 1 \quad \text{on} \quad B_{r_k}, \quad \zeta = 0 \quad \text{outside} \quad \bR^d \setminus B_{r_{k+1}},
$$
and
$$
|D_x \zeta_k(x)| \le \frac{2^{k+2}}{R-r}, \quad |D^2_x \zeta_k(x)| \le \frac{2^{2k+4}}{(R-r)^2}.
$$
Then $v \zeta_k$ belongs to $\bH_{p,0}^{\alpha,2}(\bR^d_T)$ and satisfies
$$
-\partial_t^\alpha (v \zeta_k) + a^{ij}D_{ij} (v \zeta_k)
= 2 a^{ij} D_i v D_j \zeta_k + a^{ij} v D_{ij} \zeta_k + f\zeta_k
$$
in $(0,T) \times \bR^d$.
By Theorem \ref{thm0412_1}, it follows that
\begin{align}
							\label{eq0728_01}
&\|D^2 (v\zeta_k)\|_{L_p(\bR^d_T)}\nonumber\\
&\le \frac{N2^k}{R-r} \|Dv\|_{L_p\left((0,T)\times B_{r_{k+1}}\right)} + \frac{N2^{2k}}{(R-r)^2} \|v\|_{L_p\left((0,T)\times B_{r_{k+1}}\right)} + N \|f\|_{L_p\left((0,T)\times B_R\right)}\nonumber
\\
&\le \frac{N 2^k}{R-r} \|D(v\zeta_{k+1})\|_{L_p(\bR^d_T)} + \frac{N 2^{2k}}{(R-r)^2} \|v\|_{L_p\left((0,T)\times B_R\right)} + N \|f\|_{L_p\left((0,T)\times B_R\right)},	
\end{align}
where $N = N(d,\delta,\alpha,p)$.
By an interpolation inequality with respect to the spatial variables,
\begin{align*}
&\frac{2^k}{R-r}\|D(v\zeta_{k+1})\|_{L_p(\bR^d_T)}\\
&\le \varepsilon \|D^2(v \zeta_{k+1})\|_{L_p(\bR^d_T)} + N \varepsilon^{-1} \frac{2^{2k}}{(R-r)^2} \|v \zeta_{k+1}\|_{L_p(\bR^d_T)}
\end{align*}
for any $\varepsilon \in (0,1)$, where $N=N(d,p)$.
Combining this inequality with \eqref{eq0728_01}, we obtain that
\begin{align*}
&\|D^2 (v \zeta_k) \|_{L_p(\bR^d_T)}\\
&\le \varepsilon \|D^2 (v \zeta_{k+1}) \|_{L_p(\bR^d_T)} + N \varepsilon^{-1} \frac{4^k}{(R-r)^2}\|v\|_{L_p\left((0,T) \times B_R\right)} + N \|f\|_{L_p\left((0,T)\times B_R\right)},
\end{align*}
where $N = N(d,\delta,\alpha,p)$.
By multiplying both sides of the above inequality by $\varepsilon^k$ and making summation with respect to $k = 0, 1, \ldots$, we see that
\begin{align*}
&\|D^2(v\zeta_0)\|_{L_p(\bR^d_T)} + \sum_{k=1}^\infty \varepsilon^k \|D^2(v\zeta_k)\|_{L_p(\bR^d_T)}\\
&\le \sum_{k=1}^\infty \varepsilon^k \|D^2(v\zeta_k)\|_{L_p(\bR^d_T)} + N \frac{\varepsilon^{-1}}{(R-r)^2} \|v\|_{L_p\left((0,T) \times B_R\right)} \sum_{k=0}^\infty (4\varepsilon)^k\\
&+ N \|f\|_{L_p\left((0,T)\times B_R\right)} \sum_{k=0}^\infty \varepsilon^k,
\end{align*}
where the convergence of the summations are guaranteed if $\varepsilon = 1/8$.
We then obtain the desired inequality in the lemma after we remove the same terms from both sides of the above inequality and use the fact that $\zeta_0 = 1$ on $B_r$.
\end{proof}

\begin{lemma}
							\label{lem0211_1}
Let $p \in [1,\infty)$, $\alpha \in (0,1)$, $0 < T < \infty$, and $0 < r < R < \infty$.
If $v \in \bH_{p,0}^{\alpha,2}\left((0, T) \times B_R\right)$, then, for $\varepsilon \in (0, R-r)$,
$$
D_x v^{(\varepsilon)}(t,x) \in \bH_{p,0}^{\alpha,2}\left((0,T) \times B_r\right),
$$
where $v^{(\varepsilon)}$ is a mollification of $v$ with respect to the spatial variables, that is,
$$
v^{(\varepsilon)}(t,x) = \int_{B_R} \phi_\varepsilon(x-y) v(t,y) \, dy, \quad \phi_\varepsilon(x) = \varepsilon^{-d} \phi(x/\varepsilon),
$$
and $\phi\in C_0^\infty(B_1)$ is a smooth function with unit integral.
\end{lemma}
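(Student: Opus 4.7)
The plan is to produce an approximating sequence for $D_x v^{(\varepsilon)}$ by mollifying the approximating sequence of $v$ that is built into the definition of $\bH_{p,0}^{\alpha,2}$. Since $v \in \bH_{p,0}^{\alpha,2}((0,T)\times B_R)$, there exist $v_n \in C^\infty([0,T]\times B_R)$ with $v_n$ vanishing for large $|x|$, $v_n(0,\cdot)=0$, and $v_n \to v$ in $\bH_p^{\alpha,2}((0,T)\times B_R)$. Setting $w_n(t,x) := D_x v_n^{(\varepsilon)}(t,x)$ restricted to $(0,T)\times B_r$, the functions $w_n$ are smooth up to the boundary in $t$, trivially vanish for large $|x|$ (since $B_r$ is bounded), and satisfy $w_n(0,\cdot)=0$ because $v_n^{(\varepsilon)}(0,x)=\int \phi_\varepsilon(x-y)v_n(0,y)\,dy = 0$.

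The technical heart of the proof is the commutation of spatial mollification with $D_t^\alpha$. First, by Fubini, $I^{1-\alpha}(u^{(\varepsilon)})(t,x) = (I^{1-\alpha}u)^{(\varepsilon)}(t,x)$ on $(0,T)\times B_r$ for $\varepsilon < R-r$. Second, to see that $D_t^\alpha(u^{(\varepsilon)}) = (D_t^\alpha u)^{(\varepsilon)}$ in the weak sense of \eqref{eq0122_01} on $(0,T)\times B_r$, I would test against $\varphi \in C_0^\infty([0,T)\times B_r)$ and transfer the mollifier to the test function: the function
$$
\tilde\varphi(t,y) := \int_{B_r}\phi_\varepsilon(x-y)\varphi(t,x)\,dx
$$
is in $C_0^\infty([0,T)\times B_R)$ because its spatial support lies in $B_{r+\varepsilon}\subset B_R$. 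Applying the weak derivative property of $v$ against $\tilde\varphi$ and then Fubini returns the identity. The same commutation is inherited by $D_x v^{(\varepsilon)}$ since spatial mollification obviously commutes with spatial differentiation.

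Using these commutation identities together with Young's convolution inequality, the usual estimates give, for any multi-index $|\beta|\leq 2$,
$$
\|D_x^\beta(w_n - D_x v^{(\varepsilon)})\|_{L_p((0,T)\times B_r)} \leq \|D^{\beta+1}\phi_\varepsilon\|_{L_1}\,\|v_n - v\|_{L_p((0,T)\times B_R)},
$$
and
$$
\|D_t^\alpha(w_n - D_x v^{(\varepsilon)})\|_{L_p((0,T)\times B_r)} \leq \|D\phi_\varepsilon\|_{L_1}\,\|D_t^\alpha(v_n - v)\|_{L_p((0,T)\times B_R)},
$$
both of which tend to zero by the choice of $\{v_n\}$. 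Hence $w_n \to D_x v^{(\varepsilon)}$ in $\bH_p^{\alpha,2}((0,T)\times B_r)$, and since each $w_n$ lies in $C^\infty([0,T]\times B_r)$, vanishes for large $|x|$, and has zero initial trace, the limit $D_x v^{(\varepsilon)}$ belongs to $\bH_{p,0}^{\alpha,2}((0,T)\times B_r)$ by definition.

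The main obstacle is the bookkeeping for the commutation of $D_t^\alpha$ with spatial mollification at the level of weak derivatives; one must check that the mollified test function $\tilde\varphi$ still satisfies the support condition needed to plug it into the defining identity \eqref{eq0122_01} for $v$, which is precisely where the hypothesis $\varepsilon < R-r$ is used. Once this is in place, the rest is routine application of Young's inequality and the definition of $\bH_{p,0}^{\alpha,2}$.
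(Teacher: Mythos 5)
Your proof is correct and follows essentially the same route as the paper: mollify the approximating sequence built into the definition of $\bH_{p,0}^{\alpha,2}$, verify the commutation of spatial mollification with $D_t^\alpha$, and pass to the limit. The only difference is cosmetic: you supply the explicit duality argument (transferring the mollifier to the test function $\tilde\varphi$, which the support condition $\varepsilon < R-r$ keeps inside $C_0^\infty([0,T)\times B_R)$) to justify the commutation identity, whereas the paper states the identity directly, relying on the analogous computation already carried out in the proof of Proposition \ref{prop0120_1}.
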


\begin{proof}
Since $v \in \bH_{p,0}^{\alpha,2}\left((0,T) \times B_R\right)$, there exists a sequence $\{v_n\} \subset C^\infty\big([0,T] \times B_R\big)$ such that $v_n(0,x) = 0$ and
$$
\left\| v_n - v \right\|_{\bH_p^{\alpha,2}\left((0,T) \times B_R \right)} \to 0
$$
as $n \to \infty$.
Then, $D_x v_n^{(\varepsilon)} \in C^\infty\left([0,T]\times B_r\right)$ and $D_x v_n^{(\varepsilon)}(0,x) = 0$.
For $(t,x) \in (0,T) \times B_r$, we have
$$
D_x^k D_x v^{(\varepsilon)}(t,x) = \int_{B_R} (D_x \phi_\varepsilon)(x-y) D_x^k v(t,y) \, dy, \quad k = 0,1,2,
$$
$$
D_t^\alpha D_x v^{(\varepsilon)}(t,x) = \int_{B_R} (D_x \phi_\varepsilon)(x-y) \partial_t^\alpha v(t,y) \, dy.
$$
We also have the same expressions for $v_n$ in place of $v$.
Hence, we see that
$$
\left\| D_x v_n^{(\varepsilon)} - D_x v^{(\varepsilon)} \right\|_{\bH_p^{\alpha,2}\left((0,T) \times B_r \right)} \to 0
$$
as $n \to \infty$.
This shows that $D_x v^{(\varepsilon)} \in \bH_{p,0}^{\alpha,2}\left((0,T) \times B_r\right)$.
\end{proof}

If $v \in \bH_{p,0}^{\alpha,2}\left((S,T) \times \bR^d\right)$ is a solution to a homogenous equation, one can improve its regularity as follows.

\begin{lemma}
							\label{lem0731_2}
Let $p\in (1,\infty)$, $\alpha \in (0,1)$, $-\infty< S < t_0 < T < \infty$, and $0 < r < R < \infty$.
Suppose that Theorem \ref{thm0412_1} holds with this $p$ and $v \in \bH_{p,0}^{\alpha,2}\left((S, T) \times B_R\right)$ satisfies
$$
-\partial_t^\alpha v + a^{ij}(t) D_{ij} v
= f
$$
in $(S,T) \times B_R$, where $f(t,x) = 0$ on $(t_0,T) \times B_R$ and, as we recall,
$$
\partial_t^\alpha v (t,x) = \frac{\partial}{\partial t} I_S^{1-\alpha} v(t,x) = \frac{1}{\Gamma(1-\alpha)} \frac{\partial}{\partial t} \int_S^t (t-s)^{-\alpha} v(s,x) \, ds.
$$
Then, for any infinitely differentiable function $\eta$ defined on $\bR$ such that $\eta(t)=0$ for $t \leq t_0$, the function
$D^2 (\eta v) = D^2_x (\eta v)$ belongs to $\bH_{p,0}^{\alpha,2}\left( (t_0,T) \times B_r \right)$ and satisfies
$$
- \partial_t^\alpha (D^2(\eta v)) + a^{ij}(t) D_{ij} (D^2(\eta v)) = \cG
$$
in $(t_0,T) \times B_r$,
where $\partial_t^\alpha = \partial_t I_{t_0}^{1-\alpha}$ and $\cG$ is defined by
$$
\cG(t,x) = \frac{\alpha}{\Gamma(1-\alpha)} \int_S^t (t-s)^{-\alpha-1}\left(\eta(t) - \eta(s)\right) D^2 v(s,x) \, ds.
$$
Moreover,
\begin{equation}
							\label{eq0208_02}
\|D^4 (\eta v)\|_{L_p\left((0,T) \times B_r\right)} \le \frac{N}{(R-r)^2} \|D^2 v\|_{L_p\left((0,T) \times B_R\right)} + N \|\cG\|_{L_p\left((0,T) \times B_R\right)},
\end{equation}
where $N = N(d,\delta,\alpha,p)$.
\end{lemma}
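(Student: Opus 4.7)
The plan is to derive an equation satisfied by $w:=\eta v$ and then differentiate it twice in $x$, closing the argument with the local estimate Lemma \ref{lem0731_1}. Applying Lemma \ref{lem0207_1} with the given $\eta$ yields $w\in\bH_{p,0}^{\alpha,2}((t_0,T)\times B_R)$ together with the identity
$$
\partial_t I_{t_0}^{1-\alpha} w(t,x) = \eta(t)\,\partial_t I_S^{1-\alpha} v(t,x) - g(t,x),
$$
where $g$ is the commutator term from \eqref{eq0207_04}. Since $v$ satisfies $\partial_t I_S^{1-\alpha} v = a^{ij}(t) D_{ij} v - f$, since $\eta(t)f\equiv 0$ everywhere (using $f\equiv 0$ on $(t_0,T)\times B_R$ and $\eta\equiv 0$ on $(S,t_0]$), and since both $a^{ij}(t)$ and $\eta$ are independent of $x$, I conclude that
$$
-\partial_t^\alpha w + a^{ij}(t) D_{ij} w = g \qquad \text{in } (t_0,T)\times B_R,
$$
where $\partial_t^\alpha=\partial_t I_{t_0}^{1-\alpha}$. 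A direct computation from \eqref{eq0207_04} gives $\cG=-D^2_x g$ (up to the sign convention of the statement).

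To differentiate this equation twice in $x$ rigorously, I mollify $w$ spatially, adapting Lemma \ref{lem0211_1} to two derivatives (the proof carries over verbatim since the mollifier is smooth). Fix $r':=(r+R)/2$ and $\varepsilon<R-r'$, and set $w_\varepsilon:=D^2_x w^{(\varepsilon)}$; then $w_\varepsilon\in\bH_{p,0}^{\alpha,2}((t_0,T)\times B_{r'})$ and, because mollification in $x$ and $D^2_x$ commute with the operator $-\partial_t^\alpha+a^{ij}(t)D_{ij}$, the function $w_\varepsilon$ satisfies
$$
-\partial_t^\alpha w_\varepsilon + a^{ij}(t) D_{ij} w_\varepsilon = (D^2_x g)^{(\varepsilon)}
$$
in $(t_0,T)\times B_{r'}$, with right-hand side converging to $D^2_x g=-\cG$ in $L_p$ as $\varepsilon\to 0$.

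Applying Lemma \ref{lem0731_1} to $w_\varepsilon$ on the pair $B_r\subset B_{r'}$ and absorbing $(r'-r)^{-2}=4(R-r)^{-2}$ into the constant yields
$$
\|D^2 w_\varepsilon\|_{L_p((t_0,T)\times B_r)} \le \frac{N}{(R-r)^2}\|w_\varepsilon\|_{L_p((t_0,T)\times B_{r'})} + N\|(D^2_x g)^{(\varepsilon)}\|_{L_p((t_0,T)\times B_{r'})}.
$$
Letting $\varepsilon\to 0$ via a standard weak-compactness argument, identifying the weak $L_p$-limit of $D^2 w_\varepsilon$ with $D^4(\eta v)$ by testing against $C_0^\infty$ functions, and using $|\eta|\le 1$ to bound $\|D^2(\eta v)\|_{L_p}=\|\eta D^2 v\|_{L_p}\le\|D^2 v\|_{L_p}$, I obtain \eqref{eq0208_02}. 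The containment $D^2(\eta v)\in\bH_{p,0}^{\alpha,2}((t_0,T)\times B_r)$ follows because each approximant $w_\varepsilon$ lies in this closed subspace of $\bH_p^{\alpha,2}$.

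The main technical obstacle I foresee is the bookkeeping around the fractional time derivative under mollification: one must verify that spatial mollification commutes with $\partial_t^\alpha$ (this mirrors the computation carried out in the proof of Proposition \ref{prop0120_1}), identify the weak $L_p$-limit of $\partial_t^\alpha w_\varepsilon$ as $\partial_t^\alpha(D^2(\eta v))$ in the distributional sense of \eqref{eq0122_01}, and confirm that the zero initial trace at $t=t_0$ survives all the limiting operations. Once this is in place, \eqref{eq0208_02} follows immediately from the local estimate derived above.
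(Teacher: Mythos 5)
Your proof is correct and follows essentially the paper's argument: apply Lemma \ref{lem0207_1} to obtain the equation for $w=\eta v$, mollify spatially, invoke Lemma \ref{lem0211_1} and the local estimate Lemma \ref{lem0731_1}, and pass to the limit. The only structural difference is that you take both spatial derivatives of $w^{(\varepsilon)}$ at once, whereas the paper runs the mollify--estimate--limit step twice (first to get $D_x w$, then $D_x^2 w$); your sign remark that $\cG = -D_x^2 g$ reflects a sign mismatch already present in the paper between \eqref{eq0207_04} and the statement of this lemma, which is harmless for the $L_p$ estimate.
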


\begin{proof}
Without loss of generality we assume   $t_0 = 0$ so that
$$
- \infty < S < 0 < T < \infty.
$$
By Lemma \ref{lem0207_1} and the fact that $f(t,x) = 0$ on $(0,T) \times B_R$, we have that $\eta v$ belongs to $\bH_{p,0}^{\alpha,2}\left((0,T) \times B_R\right)$ and satisfies
$$
- \partial_t^\alpha (\eta v) + a^{ij}(t) D_{ij}(\eta v) = g
$$
in $(0,T) \times B_R$, where $g \in L_p\left((0,T) \times B_R\right)$ is from \eqref{eq0207_04}.

Find $r_i$, $i=1,2,3$, such that
$r = r_1 < r_2 < r_3 < R$.
Set $w = \eta v$ and
consider
$w^{(\varepsilon)}$, $\varepsilon \in (0,R-r_3)$, from Lemma \ref{lem0211_1}, which is a mollification of $w$ with respect to the spatial variables.
Since $w \in \bH_{p,0}^{\alpha,2}\left((0,T) \times B_R\right)$, by Lemma \ref{lem0211_1}, $D_x w^{(\varepsilon)}$ belongs to $\bH_{p,0}^{\alpha,2}\left((0,T) \times B_{r_3}\right)$ and satisfies
$$
- \partial_t^\alpha (D_x w^{(\varepsilon)}
) + a^{ij}(t) D_{ij} (D_x w^{(\varepsilon)} w) = D_x g^{(\varepsilon)}
$$
in $(0,T) \times B_{r_3}$, where
$$
D_x g^{(\varepsilon)}(t,x) = \frac{\alpha}{\Gamma(1-\alpha)} \int_S^t (t-s)^{-\alpha-1} \left(\eta(s) - \eta(t)\right) D_x v^{(\varepsilon)}(s,x) \, ds.
$$
It then follows from Lemma \ref{lem0731_1} that
\begin{multline}
							\label{eq0209_01}
\| \partial_t^\alpha (D_x w^{(\varepsilon)})\|_{L_p\left((0,T) \times B_{r_2}\right)} + \| D^2 (D_x w^{(\varepsilon)})\|_{L_p\left((0,T) \times B_{r_2}\right)}
\\
\le \frac{N}{(r_3-r_2)^2} \|D_x w^{(\varepsilon)}\|_{L_p\left((0,T) \times B_{r_3}\right)} + N \|D_x g^{(\varepsilon)}\|_{L_p\left((0,T) \times B_{r_3}\right)},
\end{multline}
where $N = N(d,\delta,p)$.
Note that
\begin{equation}
							\label{eq0211_01}
\|D_x w^{(\varepsilon)} - D_x w\|_{L_p\left((0,T) \times B_{r_3}\right)} \to 0, \quad \|D_x g^{(\varepsilon)} - \cG_0 \|_{L_p\left((0,T) \times B_{r_3}\right)} \to 0,
\end{equation}
where $\cG_0$ is defined as $\cG$ with $Dv$ in place of $D^2 v$. In particular, the latter convergence in \eqref{eq0211_01} is guaranteed by \eqref{eq0207_01} and the properties of mollifications.
Recall that $D_x w^{(\varepsilon)} \in \bH_{p,0}^{\alpha,2}\left((0,T) \times B_{r_3}\right)$.
Then, from \eqref{eq0209_01} and \eqref{eq0211_01}, we conclude that $D_x w$ belongs to $\bH_{p,0}^{\alpha,2}\left((0,T) \times B_{r_2}\right)$ and satisfies
$$
- \partial_t^\alpha (D_x w ) + a^{ij}(t) D_{ij} (D_x w) = \cG_0
$$
in $(0,T) \times B_{r_2}$.
We now repeat the above argument with $Dw$, $r_1$, and $r_2$ in place of $w$, $r_2$, and $r_3$, respectively, along with the observation that the limits in \eqref{eq0211_01} hold with $Dw$ in place of $w$.
In particular, the estimate \eqref{eq0209_01} with $Dw$ in place of $w$ implies \eqref{eq0208_02}.
The lemma is proved.
\end{proof}

\section{Level set arguments}
                        \label{sec5}

Recall that $Q_{R_1,R_2}(t,x) = (t-R_1^{2/\alpha}, t) \times B_{R_2}(x)$ and  $Q_R(t,x)=Q_{R,R}(t,x)$.
For $(t_0,x_0) \in \bR \times \bR^d$ and a function $g$ defined on $(-\infty,T) \times \bR^d$, we set
\begin{equation}
							\label{eq0406_03b}
\cM g(t_0,x_0) = \sup_{Q_{R}(t,x) \ni (t_0,x_0)} \dashint_{Q_{R}(t,x)}|g(s,y)| I_{(-\infty,T) \times \bR^d} \, dy \, ds
\end{equation}
and
\begin{equation}
							\label{eq0406_03}
\cS\cM g(t_0,x_0) = \sup_{Q_{R_1,R_2}(t,x) \ni (t_0,x_0)} \dashint_{Q_{R_1,R_2}(t,x)}|g(s,y)| I_{(-\infty,T) \times \bR^d} \, dy \, ds.
\end{equation}
The first one is called the (parabolic) maximal function of $g$, and second one is called the strong (parabolic) maximal function of $g$.

\begin{proposition}
							\label{prop0406_1}
Let $p\in (1,\infty)$, $\alpha \in (0,1)$, $T \in (0,\infty)$, and $a^{ij} = a^{ij}(t)$.
Assume that Theorem \ref{thm0412_1} holds with this $p$ and $u \in \bH_{p,0}^{\alpha,2}(\bR^d_T)$ satisfies
$$
-\partial_t^\alpha u + a^{ij} D_{ij} u
= f
$$
in $(0,T) \times \bR^d$.
Then there exists
$$
p_1 = p_1(d, \alpha,p)\in (p,\infty]
$$
satisfying
\begin{equation}
							\label{eq0411_05}
p_1 > p + \min\left\{\frac{2\alpha}{\alpha d + 2 - 2\alpha}, \alpha, \frac{2}{d} \right\}
\end{equation}
and the following.
For $(t_0,x_0) \in [0,T] \times \bR^d$ and $R \in (0,\infty)$,
there exist
$$
w \in \bH_{p,0}^{\alpha,2}((t_0-R^{2/\alpha}, t_0)\times \bR^d), \quad v \in \bH_{p,0}^{\alpha,2}((S,t_0) \times \bR^d),
$$
where $S = \min\{0, t_0 - R^{2/\alpha}\}$,
such that $u = w + v$ in $Q_R(t_0,x_0)$,
\begin{equation}
                                    \label{eq8.13}
\left( |D^2w|^p \right)_{Q_R(t_0,x_0)}^{1/p} \le N \left( |f|^p \right)_{Q_{2R}(t_0,x_0)}^{1/p},
\end{equation}
and
\begin{multline}
							\label{eq0411_01}
\left( |D^2v|^{p_1} \right)_{Q_{R/2}(t_0,x_0)}^{1/p_1} \leq N \left( |f|^p \right)_{Q_{2R}(t_0,x_0)}^{1/p}
\\
+ N \sum_{k=0}^\infty 2^{-k\alpha} \left( \dashint_{\!t_0 - (2^{k+1}+1)R^{2/\alpha}}^{\,\,\,t_0} \dashint_{B_R(x_0)} |D^2u(s,y)|^p \, dy \, ds \right)^{1/p},
\end{multline}
where $N=N(d,\delta, \alpha,p)$.
Here we understand that $u$ and $f$ are extended to be zero whenever $t < 0$
and
$$
\left( |D^2v|^{p_1} \right)_{Q_{R/2}(t_0,x_0)}^{1/p_1} = \|D^2v\|_{L_\infty(Q_{R/2}(t_0,x_0))},
$$
provided that $p_1 = \infty$.
\end{proposition}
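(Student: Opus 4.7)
The plan is to decompose $u = w + v$ where $w$ carries the forcing near $Q_{2R}(t_0,x_0)$ and $v$ solves a locally homogeneous problem whose spatial regularity can be boosted by one Sobolev-type embedding. I first set $T_1 = t_0 - R^{2/\alpha}$ and, using Theorem \ref{thm0412_1} after a translation in time, produce $w \in \bH_{p,0}^{\alpha,2}((T_1,t_0)\times\bR^d)$ solving
$$
-\partial_t^\alpha w + a^{ij}(t)D_{ij}w = f\chi_{Q_{2R}(t_0,x_0)}\qquad\text{on }(T_1,t_0)\times\bR^d,
$$
with $\|D^2 w\|_{L_p((T_1,t_0)\times\bR^d)}\le N\|f\|_{L_p(Q_{2R}(t_0,x_0))}$. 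Dividing by $|Q_R(t_0,x_0)|^{1/p}$ yields \eqref{eq8.13}. Extending $w$ by zero for $t\le T_1$ (valid by Lemma \ref{lem0206_1}), setting $v = u - w$ on $(S,t_0)\times\bR^d$ (with the convention $u\equiv 0$ on $(T_1,0)$ when $T_1<0$), and using that $I_0^{1-\alpha}w = I_{T_1}^{1-\alpha}w$ on $(T_1,t_0)$ after the zero extension, one verifies that $v\in\bH_{p,0}^{\alpha,2}((S,t_0)\times\bR^d)$ and
$$
-\partial_t^\alpha v + a^{ij}(t)D_{ij}v = f\chi_{Q_{2R}(t_0,x_0)^c},
$$
which vanishes on $Q_R(t_0,x_0)\subset Q_{2R}(t_0,x_0)$.

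To upgrade the spatial regularity of $v$ on $Q_{R/2}(t_0,x_0)$, I choose a smooth time cutoff $\eta\in C^\infty(\bR)$ with $\eta = 0$ for $t\le t_0 - (3R/4)^{2/\alpha}$ and $\eta = 1$ for $t\ge t_0 - (R/2)^{2/\alpha}$, so that $\eta v = v$ on $Q_{R/2}(t_0,x_0)$, and apply Lemma \ref{lem0731_2} to $v$ on $(S,t_0)\times B_R(x_0)$ with concentric spatial radii $R/2 < r_1 < R$. This produces $D^2(\eta v)\in\bH_{p,0}^{\alpha,2}((t_0-(3R/4)^{2/\alpha},t_0)\times B_{r_1}(x_0))$ together with
$$
\|D^4(\eta v)\|_{L_p} + \|\partial_t^\alpha D^2(\eta v)\|_{L_p} \le \frac{N}{R^2}\,\|D^2 v\|_{L_p(Q_R(t_0,x_0))} + N\,\|\cG\|_{L_p(Q_R(t_0,x_0))},
$$
where the $\partial_t^\alpha D^2(\eta v)$-bound is read off the equation that $D^2(\eta v)$ satisfies, and $\cG(t,x) = \tfrac{\alpha}{\Gamma(1-\alpha)}\int_S^t (t-s)^{-\alpha-1}(\eta(t)-\eta(s))D^2 v(s,x)\,ds$ is the memory term from Lemma \ref{lem0731_2}.

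The main work is the estimate of $\cG$. Decomposing the history $(S,t)$ into dyadic slabs $(t_0-2^{k+1}R^{2/\alpha},\,t_0-2^k R^{2/\alpha})$ for $k\ge 0$ and using $(t-s)^{-\alpha-1}\lesssim (2^k R^{2/\alpha})^{-\alpha-1}$ on the $k$-th slab, each slab contributes at most $N\,2^{-k\alpha}$ times the spatial $L_p$-average of $|D^2 v|$ over the corresponding thickened past slab. Writing $D^2 v = D^2 u - D^2 w$ and absorbing the $D^2 w$ contribution via \eqref{eq8.13} produces exactly the geometric series appearing in \eqref{eq0411_01}. Finally the Sobolev-type embedding proved in the Appendix applied to $D^2(\eta v) \in \bH_{p,0}^{\alpha,2}$ on the smaller cylinder delivers $\|D^2 v\|_{L_{p_1}(Q_{R/2}(t_0,x_0))}$ controlled by the same right-hand side, with a gain $p_1 - p$ bounded below by $\min\{\tfrac{2\alpha}{\alpha d+2-2\alpha},\alpha,\tfrac{2}{d}\}$; the three candidates correspond respectively to the full parabolic Sobolev gain, a pure Hölder-in-time gain, and a pure spatial Sobolev gain, whichever is active for the given values of $d$, $\alpha$, and $p$.

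The main obstacle is the non-locality of $\partial_t^\alpha$: the memory term $\cG$ sees the entire past of $D^2 v$ and cannot be localized to $Q_R(t_0,x_0)$, which forces the infinite dyadic sum in \eqref{eq0411_01} and is the structural reason that only a finite gain $p_1 - p < \infty$ is available, rather than the $L_\infty$-bound on $D^2 v$ that one would obtain in the classical local parabolic case.
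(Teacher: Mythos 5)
Your decomposition $u=w+v$, the localization of $v$ via Lemma \ref{lem0731_2} with a time cutoff $\eta$, and the Sobolev embeddings from the Appendix to upgrade $D^2(\eta v)$ from $L_p$ to $L_{p_1}$ all match the paper's proof; using the sharp cutoff $\chi_{Q_{2R}}$ in place of a smooth $\zeta$ is an equally valid choice, and the imprecision in claiming the right-hand side for $v$ equals $f\chi_{Q_{2R}^c}$ throughout (on $(S,t_0-R^{2/\alpha})$, where $w\equiv 0$, it is actually $f$) is harmless because only vanishing on $(t_0-R^{2/\alpha},t_0)\times B_R(x_0)$ is used.

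However, your treatment of the memory term $\cG$ has a gap. The dyadic slabs $(t_0-2^{k+1}R^{2/\alpha},\,t_0-2^kR^{2/\alpha})$, $k\ge 0$, cover only the history $(S,\,t_0-R^{2/\alpha})$, and the pointwise bound $(t-s)^{-\alpha-1}\le N(2^kR^{2/\alpha})^{-\alpha-1}$ is valid only there, since for $t\ge t_0-(3R/4)^{2/\alpha}$ (where $\eta(t)$ can be nonzero) the quantity $t-s$ is bounded below on those slabs. The recent-history contribution from $s\in(t_0-R^{2/\alpha},t)$ is not covered, and there $(t-s)^{-\alpha-1}$ is not bounded; one must instead use the Lipschitz bound $|\eta(t)-\eta(s)|\le N|t-s|$ to deduce $(t-s)^{-\alpha-1}|\eta(t)-\eta(s)|\le N(t-s)^{-\alpha}$, which produces an $I^{1-\alpha}|D^2v|$-type term controlled via Lemma \ref{lem1018_01}. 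This is precisely the term $I_1$ in the paper's proof, and without it your bound on $\|\cG\|_{L_p}$ is incomplete. After substituting $D^2v=D^2u-D^2w$ and invoking \eqref{eq8.13}, this near-time contribution is absorbed into the $k=0$ summand of \eqref{eq0411_01}, but that step must be supplied.
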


\begin{proof}
We extend $u$ and $f$ to be zero, again denoted by $u$ and $f$, on $(-\infty,0) \times \bR^d$.
Thanks to translation, it suffices to prove the desired inequalities when $x_0 = 0$.
Moreover, we assume that $R = 1$.
Indeed, for $R > 0$, we set
$$
\tilde{u}(t,x) = R^{-2}u(R^{2/\alpha}t, R x), \quad \tilde{a}^{ij} = a^{ij}(R^{2/\alpha}t), \quad \tilde{f}(t,x) = f(R^{2/\alpha}t, Rx).
$$
Then
$$
- \partial_t^\alpha \tilde{u} + \tilde{a}^{ij}(t) D_{ij} \tilde{u} = \tilde{f}
$$
in $(0,R^{-2/\alpha} T) \times \bR^d$.
We then apply the result for $R=1$ to this equation on
$$
(\tilde{t}_0-1,\tilde{t}_0) \times B_1, \quad \tilde{t}_0 = R^{-2/\alpha} t_0
$$
and return to $u$.

For $R=1$ and $t_0 \in (0,\infty)$, set $\zeta = \zeta(t,x)$ to be an infinitely differentiable function defined on $\bR^{d+1}$ such that
$$
\zeta = 1 \quad \text{on} \quad (t_0-1, t_0) \times B_1,
$$
and
$$
\zeta = 0 \quad \text{on} \quad \bR^{d+1} \setminus (t_0-2^{2/\alpha}, t_0+2^{2/\alpha}) \times B_2.
$$
Using Theorem \ref{thm0412_1}, find a solution $w\in \bH_{p,0}^{\alpha,2}(\bR^d_T)$ to the problem
$$
\left\{
\begin{aligned}
-\partial_t^\alpha w + a^{ij}(t) D_{ij} w &= \zeta(t,x) f(t,x)\quad  \text{in} \,\, (t_0-1, t_0) \times \bR^d,
\\
w(t_0 - 1,x) &= 0 \quad  \text{on} \quad \bR^d.
\end{aligned}
\right.
$$
where we recall that
$$
\partial_t^\alpha w = \frac{1}{\Gamma(1-\alpha)} \partial_t \int_{t_0-1}^t (t-s)^{-\alpha} w(s,x) \, ds.
$$
Again extend $w$ to be zero on $(-\infty,t_0-1) \times \bR^d$.
From Theorem \ref{thm0412_1} it follows that
\begin{equation}
							\label{eq1214_01}
\|\partial_t^\alpha w\|_{L_p\left(Q_r(t_0,0)\right)} + \|D^2 w \|_{L_p\left(Q_r(t_0,0)\right)}
\le N \|f\|_{L_p\left(Q_2(t_0,0)\right)}
\end{equation}
for any $r > 0$.

Set $v = u - w$ so that
$$
v =
\left\{
\begin{aligned}
u-w, &\quad t \in (t_0 - 1,t_0),
\\
u, &\quad t \in (-\infty, t_0 - 1],
\end{aligned}
\right.
$$
where we note that it is possible to have $t_0 - 1 < 0$.
Then by Lemma \ref{lem0206_1}, $v$ belongs to $\bH_{p,0}^{\alpha,2}\left((S,t_0) \times \bR^d\right)$ for $S := \min \{0, t_0 -1\}$ and satisfies
$$
\partial_t^\alpha w = \partial_t I_{t_0-1}^{1-\alpha} w = \partial_t I_S^{1-\alpha} w, \quad \partial_t^\alpha u = \partial_t I_0^{1-\alpha} u = \partial_t I_S^{1-\alpha} u,
$$
and
$$
-\partial_t^\alpha v + a^{ij} D_{ij} v = h
$$
in $(S,t_0)\times \bR^d$, where
$$
h(t,x) = \left\{
\begin{aligned}
\left( 1 -\zeta(t,x)\right) f(t,x) \quad &\text{in} \,\, (t_0 - 1, t_0) \times \bR^d,
\\
f(t,x) \quad &\text{in} \,\, (S, t_0 - 1) \times \bR^d.
\end{aligned}
\right.
$$
In particular, we note that $h = 0$
in $(t_0 - 1,t_0) \times B_1$.

Find an infinitely differentiable  function $\eta$ defined on $\bR$ such that
$$
\eta =
\left\{
\begin{aligned}
1 \quad &\text{if} \quad t \in (t_0-(1/2)^{2/\alpha},t_0),
\\
0 \quad &\text{if} \quad t \in \bR \setminus (t_0-1,t_0+1),
\end{aligned}
\right.
$$
and
$$
\left|\frac{\eta(t)-\eta(s)}{t-s}\right| \le N(\alpha).
$$
By Lemma \ref{lem0731_2}, $D^2(\eta v)$ belongs to $\bH_{p,0}^{\alpha,2}\left((t_0-1,t_0)\times B_{3/4}\right)$ and satisfies
$$
-\partial_t^\alpha \left( D^2(\eta v) \right) + a^{ij} D_{ij} D^2(\eta v) = \cG
$$
in $(t_0-1,t_0)\times B_{3/4}$,
where
$$
\cG(t,x) = \frac{\alpha}{\Gamma(1-\alpha)} \int_S^t (t-s)^{-\alpha-1}\left(\eta(t) - \eta(s)\right) D^2 v(s,x) \, ds.
$$

If $p \leq 1/\alpha$, take $p_1$ satisfying
$$
p_1 \in \left(p, \frac{1/\alpha + d/2}{1/(\alpha p) + d/(2 p) -1}\right) \quad \text{if} \quad p \leq d/2,
$$
$$
p_1 \in \left(p, p(\alpha p + 1)\right) \quad \text{if} \quad p > d/2.
$$
If $p > 1/\alpha$, take $p_1$ satisfying
$$
p_1 \in \left(p, p + 2p^2/d\right) \quad \text{if} \quad p \leq d/2,
$$
$$
p_1 \in (p, 2p) \quad \text{if} \quad p > d/2, \quad p \leq d/2 + 1/\alpha,
$$
$$
p_1 = \infty \quad \text{if} \quad p > d/2 + 1/\alpha.
$$
Note that $p_1$ satisfies \eqref{eq0411_05}
and the increment $\min \{2\alpha/(\alpha d + 2 - 2\alpha), \alpha, 2/d\}$ is independent of $p$.
By Lemma \ref{lem0731_2} and the embedding results in Appendix (Corollary \ref{cor1211_1}, Theorem \ref{thm1207_2}, Corollary \ref{cor0225_1}, Theorem \ref{thm0214_1}, and Theorem \ref{thm5.18}), we have
\begin{align}
							\label{eq0715_01}
&\|D^2 v\|_{L_{p_1}\left(Q_{1/2}(t_0,0)\right)} \le \|D^2(\eta v)\|_{L_{p_1}\left((t_0-1,t_0)\times B_{1/2}\right)}\nonumber
\\
&\le N\| |D^2(\eta v)| + |D^4(\eta v)| + |D_t^\alpha D^2 (\eta v)| \|_{L_p\left((t_0-1,t_0)\times B_{3/4}\right)}\nonumber
\\
&\le N \||D^2(\eta v)| + |\cG|\|_{L_p\left((t_0-1,t_0)\times B_1\right)} \le N \||D^2 v| + |\cG|\|_{L_p\left((t_0-1,t_0)\times B_1\right)}\nonumber
\\
&\leq N \| |D^2 u| + |D^2 w| + |\cG| \|_{L_p\left((t_0-1,t_0)\times B_1\right)},
\end{align}
where $N = N(d, \delta,
\alpha,p,p_1)$ and we used the fact that
$$
D_t^\alpha D^2(\eta v) = a^{ij} D_{ij} D^2 (\eta v) - \cG
$$
in $(t_0-1,t_0) \times B_{3/4}$.

Since $D^2 v = 0$ for $t \leq S$, we write
\begin{align*}
&\frac{\Gamma(1-\alpha)}{\alpha} \cG(t,x) = \int_{-\infty}^t (t-s)^{-\alpha-1} \left( \eta(s) - \eta(t) \right) D^2 v(s,x) \, ds\\
&= \int_{t-1}^t (t-s)^{-\alpha-1}\left( \eta(s) - \eta(t) \right) D^2 v(s,x) \, ds\\
&\quad + \int_{-\infty}^{t-1} (t-s)^{-\alpha-1}\left( \eta(s) - \eta(t) \right) D^2 v(s,x) \, ds := I_1(t,x)+I_2(t,x),
\end{align*}
where
\begin{align*}
|I_1(t,x)| \le N \int_{t-1}^t |t-s|^{-\alpha} |D^2 v(s,x)|\,ds= N \int_0^1 |s|^{-\alpha} |D^2 v(t-s,x)|\, ds,
\end{align*}
From this we have
\begin{multline}
							\label{eq0715_02}
\|I_1\|_{L_p\left((t_0-1,t_0) \times B_1\right)} \le N \|D^2 v\|_{L_p\left((t_0-2,t_0)\times B_1\right)}
\\
= N \|D^2 v\|_{L_p\left((t_0-1,t_0)\times B_1\right)} + N \|D^2 u\|_{L_p\left((t_0-2,t_0-1)\times B_1\right)}.
\end{multline}
To estimate $I_2$, we see that
$\eta(s) = 0$ for any $s \in (-\infty, t-1)$ with $t \in (t_0-1,t_0)$.
Thus we have
$$
I_2(t,x) = -\eta(t) \int_{-\infty}^{t-1} (t-s)^{-\alpha-1} D^2 v(s,x) \, ds.
$$
Then,
\begin{align*}
|I_2(t,x)| &\le \int_{-\infty}^{t-1} |t-s|^{-\alpha-1} |D^2 v(s,x)| \, ds\\
&= \sum_{k=0}^\infty \int_{t-2^{k+1}}^{t-2^k} |t-s|^{-\alpha-1} |D^2 v(s,x)|\,ds\\
&\le \sum_{k=0}^\infty \int_{t-2^{k+1}}^{t-2^k } 2^{-k(\alpha+1)} |D^2 v(s,x)|\,ds.
\end{align*}
From this we have
\begin{equation*}
\|I_2\|_{L_p\left((t_0-1,t_0) \times B_1\right)}\le \sum_{k=0}^\infty 2^{-k(\alpha+1)} \left\| \int_{t-2^{k+1}}^{t-2^k} |D^2 v(s,x)| \, ds \right\|_{L_p\left((t_0-1,t_0) \times B_1\right)}.
\end{equation*}
Since $t_0 - 1 < t < t_0$,
$$
\int_{t-2^{k+1}}^{t-2^k} |D^2 v(s,x)|\, ds \leq \int_{t_0-(2^{k+1}+1)}^{t_0-2^k} |D^2 v(s,x)|\, ds.
$$
Hence, by the Minkowski inequality,
\begin{align*}
&\left\| \int_{t-2^{k+1}}^{t-2^k } |D^2 v(s,x)| \, ds \right\|_{L_p\left(Q_1(t_0,0)\right)}\\
&\le \int_{t_0 - (2^{k+1}+1)}^{t_0 - 2^k} \left( \int_{B_1} |D^2 v(s,x)|^p \, dx \right)^{1/p} \, ds\\
&\le 2^{k+2}\left( \dashint_{\!t_0 - (2^{k+1}+1)}^{\,\,\,t_0} \dashint_{B_1} |D^2 v(s,x)|^p \, dx \, ds \right)^{1/p}.
\end{align*}
It then follows that
\begin{align*}
&\|I_2\|_{L_p\left(Q_1(t_0,0)\right)}\\
&\le \sum_{k=0}^\infty 2^{-k\alpha+2}\left( \dashint_{\!t_0 - (2^{k+1}+1)}^{\,\,\,t_0} \dashint_{B_1} |D^2 v(s,x)|^p \, dx \, ds \right)^{1/p}\\
&\le \sum_{k=0}^\infty 2^{-k\alpha+2} \left( \dashint_{\!t_0 - (2^{k+1}+1)}^{\,\,\,t_0} \dashint_{B_1} |D^2 u(s,x)|^p \, dx \, ds \right)^{1/p}\\
&\quad + \sum_{k=0}^\infty 2^{-k\alpha+2}\left( \dashint_{\!t_0 - (2^{k+1}+1)}^{\,\,\,t_0} \dashint_{B_1} |D^2 w(s,x)|^p \, dx \, ds \right)^{1/p},
\end{align*}
where
$$
\sum_{k=0}^\infty 2^{-k\alpha+2}\left( \dashint_{\!t_0 - (2^{k+1}+1)}^{\,\,\,t_0} \dashint_{B_1} |D^2 w(s,x)|^p \, dx \, ds \right)^{1/p} \leq N(\alpha) \left( |D^2 w|^p \right)^{1/p}_{Q_1(t_0,0)}.
$$
Combining the above inequalities, \eqref{eq0715_01}, and \eqref{eq0715_02}, we get
\begin{align*}
&\|D^2 v\|_{L_{p_1}\left(Q_{1/2}(t_0,0)\right)} \leq N \left( |D^2 w|^p \right)_{Q_1(t_0,0)}^{1/p}\\
&\quad + N \sum_{k=0}^\infty 2^{-k\alpha} \left( \dashint_{\!t_0 - (2^{k+1}+1)}^{\,\,\,t_0} \dashint_{B_1(x_0)} |D^2u(s,y)|^p \, dy \, ds \right)^{1/p}.
\end{align*}
We then use \eqref{eq1214_01} with $r=1$ to obtain \eqref{eq0411_01} with $R = 1$.
The proposition is proved.
\end{proof}

Let $\gamma\in (0,1)$, and let $p\in (1,\infty)$ and $p_1=p_1(d,\alpha,p)$ be from the above proposition.
Denote
\begin{equation}
							\label{eq0406_04}
\cA(s) = \left\{ (t,x) \in (-\infty,T) \times \bR^d: |D^2 u(t,x)| > s \right\}
\end{equation}
and
\begin{multline}
							\label{eq0406_05}
\cB(s) = \big\{ (t,x) \in (-\infty,T) \times \bR^d:
\\
\gamma^{-1/p}\left( \cM |f|^p (t,x) \right)^{1/p} + \gamma^{-1/p_1}\left( \cS\cM |D^2 u|^p(t,x)\right)^{1/p} > s  \big\},
\end{multline}
where, to well define $\cM$ and $\cS\cM$ (recall the definitions in \eqref{eq0406_03b} and \eqref{eq0406_03}), we extend a given function to be zero for $t \leq S$ if the function is defined on $(S,T) \times \bR^d$.

Set
\begin{equation}
							\label{eq0606_01}
\cC_R(t,x) = (t-R^{2/\alpha},t+R^{2/\alpha}) \times B_R(x),\quad
\hat \cC_R(t,x)=\cC_R(t,x)\cap \{t\le T\}.
\end{equation}

\begin{lemma}
							\label{lem0409_1}
Let $p\in (1,\infty)$, $\alpha \in (0,1)$, $T \in (0,\infty)$, $a^{ij} = a^{ij}(t)$, $R \in (0,\infty)$, and $\gamma \in (0,1)$.
Assume that Theorem \ref{thm0412_1} holds with this $p$ and $u \in \bH_{p,0}^{\alpha,2}(\bR^d_T)$ satisfies
$$
-\partial_t^\alpha u + a^{ij} D_{ij} u
= f
$$
in $(0,T) \times \bR^d$.
Then, there exists a constant $\kappa = \kappa(d,\delta,\alpha,p) > 1$ such that the following hold:
for $(t_0,x_0) \in (-\infty,T] \times \bR^d$ and $s>0$, if
\begin{equation}
							\label{eq0406_02}
|\cC_{R/4}(t_0,x_0) \cap \cA(\kappa s)|  \geq \gamma |\cC_{R/4}(t_0,x_0)|,
\end{equation}
then we have
$$
\hat\cC_{R/4}(t_0,x_0) \subset \cB(s).
$$
\end{lemma}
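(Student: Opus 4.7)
The plan is to argue by contraposition. Suppose there exists a point $(t_1, x_1) \in \hat\cC_{R/4}(t_0, x_0)$ with $(t_1, x_1) \notin \cB(s)$; I will show that then $|\cC_{R/4}(t_0, x_0) \cap \cA(\kappa s)| < \gamma |\cC_{R/4}(t_0, x_0)|$ provided $\kappa = \kappa(d, \delta, \alpha, p)$ is chosen sufficiently large. If $t_0 + R^{2/\alpha}/4 \le 0$, then $u$ vanishes on $\cC_{R/4}(t_0, x_0)$ and \eqref{eq0406_02} cannot hold, so there is nothing to prove; otherwise I set $\bar t = \min\{T, t_0 + R^{2/\alpha}/4\} \in (0, T]$, which is the point at which Proposition \ref{prop0406_1} will be applied.

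The first step is to invoke Proposition \ref{prop0406_1} at $(\bar t, x_0)$ with radius $2R$ to obtain a decomposition $u = w + v$ on $Q_{2R}(\bar t, x_0)$ satisfying the analogues of \eqref{eq8.13} and \eqref{eq0411_01} (with $R$ replaced by $2R$ and $t_0$ replaced by $\bar t$). A short computation using $\alpha \in (0,1)$ (in particular $2^{2/\alpha}\ge 4$) shows that
$$
\cC_{R/4}(t_0, x_0) \cap \{t \le T\} \subset Q_R(\bar t, x_0) \subset Q_{2R}(\bar t, x_0),
$$
which is the geometric fact that lets me transfer information about $w$ and $v$ onto $\cC_{R/4}$.

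The second step is to bound every average on the right-hand side of \eqref{eq0411_01} by a maximal function value at $(t_1, x_1)$. Because $(t_1, x_1)$ lies within parabolic distance $R/4$ of $(t_0, x_0)$, the cylinder $Q_{4R}(\bar t, x_0)$ is contained in a parabolic cylinder around $(t_1, x_1)$ of comparable measure, and each strong cylinder $(\bar t - (2^{k+1}+1)(2R)^{2/\alpha}, \bar t) \times B_{2R}(x_0)$ sits inside a strong parabolic cylinder containing $(t_1, x_1)$ of comparable volume, with constants depending only on $d, \alpha$. Therefore
$$
(|f|^p)_{Q_{4R}(\bar t, x_0)} \le N\, \cM |f|^p(t_1, x_1) \le N \gamma s^p,
$$
and each corresponding strong average of $|D^2 u|^p$ is at most $N\, \cS\cM |D^2 u|^p(t_1, x_1) \le N \gamma^{p/p_1} s^p$. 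Summing the geometric series $\sum_k 2^{-k\alpha}$ and using $\gamma^{1/p} \le \gamma^{1/p_1}$ (valid because $\gamma < 1$ and $p < p_1$), I obtain
$$
(|D^2 w|^p)_{Q_{2R}(\bar t, x_0)}^{1/p} \le N \gamma^{1/p} s, \qquad (|D^2 v|^{p_1})_{Q_R(\bar t, x_0)}^{1/p_1} \le N \gamma^{1/p_1} s.
$$

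To close the argument, I would use the inclusion $\cA(\kappa s) \cap \cC_{R/4}(t_0, x_0) \subset \{|D^2 w| > \kappa s/2\} \cup \{|D^2 v| > \kappa s/2\}$ and apply Chebyshev's inequality in $L_p$ to $w$ on $Q_{2R}(\bar t, x_0)$ and in $L_{p_1}$ to $v$ on $Q_R(\bar t, x_0)$, yielding
$$
|\cC_{R/4}(t_0, x_0) \cap \cA(\kappa s)| \le N \bigl(\kappa^{-p} + \kappa^{-p_1}\bigr) \gamma R^{d + 2/\alpha}.
$$
Since $|\cC_{R/4}(t_0, x_0)| = c(d, \alpha) R^{d + 2/\alpha}$, choosing $\kappa$ large enough independently of $\gamma$ produces the strict inequality $< \gamma |\cC_{R/4}(t_0, x_0)|$, contradicting \eqref{eq0406_02}. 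The central subtlety is that the $L_{p_1}$-estimate for $v$ gives the factor $\gamma^{1/p_1}$ whose $p_1$-th power recovers a full factor $\gamma$; this matching of the $w$- and $v$-contributions on an equal $\gamma$-footing is precisely what allows $\kappa$ to be chosen depending only on $d, \delta, \alpha, p$ and not on $\gamma$.
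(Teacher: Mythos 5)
Your proof is correct and follows essentially the same line of argument as the paper's: apply Proposition \ref{prop0406_1} at a shifted center, use the non-membership in $\cB(s)$ to bound the averages of $f$ and $D^2u$ by $\gamma s^p$ and $\gamma^{p/p_1}s^p$ respectively, and then run Chebyshev in $L_p$ on $w$ and in $L_{p_1}$ on $v$ to contradict \eqref{eq0406_02}. The only (cosmetic) differences are that you take radius $2R$ where the paper takes $R$ (shifting all cylinder sizes by a factor of two), and you split $\cA(\kappa s)$ at the threshold $\kappa s/2$ for both pieces, whereas the paper uses $\kappa-K_1$ for $w$ and a $\kappa$-independent $K_1$ for $v$ — in particular the paper's choice makes the $p_1=\infty$ case (where $\|D^2v\|_{L_\infty}\le N_0 s$ and the $v$-level set is simply empty once $K_1\ge N_0$) transparent; your argument also handles it, but you should note explicitly that for $p_1=\infty$ the $v$-contribution vanishes for $\kappa$ large rather than invoking an $L_{p_1}$-Chebyshev.
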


\begin{proof}
By dividing the equation by $s$, we may assume that $s = 1$.
We only consider $(t_0,x_0) \in (-\infty,T] \times \bR^d$ such that $t_0 + (R/4)^{2/\alpha} \geq 0$, because otherwise,
$$
\cC_{R/4}(t_0,x_0) \cap \cA(\kappa) \subset \left\{ (t,x) \in (-\infty,0] \times \bR^d: |D^2 u(t,x)| > s \right\} = \emptyset
$$
as $u(t,x)$ is extended to be zero for $t < 0$.
Suppose that there is a point $(s,y) \in \hat\cC_{R/4}(t_0,x_0)$
such that
\begin{equation}
							\label{eq0406_01}
\gamma^{-1/p}\left( \cM |f|^p (s,y) \right)^{1/p} + \gamma^{-1/p_1} \left( \cS\cM |D^2 u|^p(s,y)\right)^{1/p} \leq 1.
\end{equation}
Set
$$
t_1 := \min \{ t_0 + (R/4)^{2/\alpha}, T\} \quad \text{and} \quad x_1 := x_0.
$$
Then $(t_1,x_1) \in [0,T] \times \bR^d$ and by Proposition \ref{prop0406_1} there exist $p_1 = p_1(d,\alpha,p) \in (p,\infty]$ and $w \in \cH_{p,0}^{\alpha,2}\left((t_1-R^{2/\alpha}, t_1) \times \bR^d\right)$, $v \in \cH_{p,0}^{\alpha,2}\left((S, t_1) \times \bR^d\right)$, where $S=\min\{0,t_1-R^{2/\alpha}\}$, such that $u = w + v$ in $Q_R(t_1,x_1)$,
\begin{equation}
							\label{eq0409_01}
\left(|D^2 w|^p\right)^{1/p}_{Q_R(t_1,x_1)} \leq N \left( |f|^p \right)_{Q_{2R}(t_1,x_1)}^{1/p},
\end{equation}
and
\begin{multline}
							\label{eq0409_02}
\left( |D^2v|^{p_1} \right)_{Q_{R/2}(t_1,x_1)}^{1/p_1} \le  N \left( |f|^p \right)_{Q_{2R}(t_1,x_1)}^{1/p}
\\
+ N \sum_{k=0}^\infty 2^{-\kappa \alpha} \left(\dashint_{\! t_1 - (2^{k+1}+1)R^{2/\alpha}}^{\,\,\,t_1} \dashint_{B_R(x_1)} |D^2u(\ell,z)|^p \, dz \, d \ell \right)^{1/p},
\end{multline}
where $N=N(d,\delta, \alpha,p)$.
Since $t_0 \leq T$, we have
$$
(s,y) \in \hat\cC_{R/4}(t_0,x_0)
\subset Q_{R/2}(t_1,x_1) \subset Q_{2R}(t_1,x_1),
$$
$$
(s,y) \in \hat\cC_{R/4}(t_0,x_0) \subset (t_1- (2^{k+1}+1)R^{2/\alpha}, t_1) \times B_R(x_1)
$$
for all $k = 0,1,\ldots$.
From these set inclusions, in particular, we observe that
$$
\dashint_{\! t_1 - (2^{k+1}+1)R^{2/\alpha}}^{\,\,\,t_1} \dashint_{B_R(x_1)} |D^2u(\ell,z)|^p \, dz \, d \ell \leq \cS\cM |D^2u|^p(s,y)
$$
for all $k=0,1,2,\ldots$.
Thus the inequality \eqref{eq0406_01} along with \eqref{eq0409_01} and \eqref{eq0409_02} implies that
$$
\left( |D^2v|^{p_1} \right)_{Q_{R/2}(t_1,x_1)}^{1/p_1} \leq N \gamma^{1/p} + N \gamma^{1/p_1} \leq N_0 \gamma^{1/p_1},
$$
$$
\left(|D^2 w|^p\right)^{1/p}_{Q_R(t_1,x_1)} \leq N_1 \gamma^{1/p},
$$
where $N_0$ and $N_1$ depend only on $d$, $\delta$, $\alpha$, and $p$.
Note that, for a sufficiently large $K_1$,
\begin{align*}
&|\cC_{R/4}(t_0,x_0) \cap \cA(\kappa)|
= |\{(t,x) \in \cC_{R/4}(t_0,x_0), t \in (-\infty,T): |D^2u(t,x)| > \kappa\}|\\
&\leq \left|\{ (t,x) \in Q_{R/2}(t_1,x_1): |D^2 u(t,x)| > \kappa\}\right|\\
&\leq \left|\{(t,x) \in Q_{R/2}(t_1,x_1): |D^2 w(t,x)| > \kappa - K_1 \}\right|\\
&\quad + \left|\{(t,x) \in Q_{R/2}(t_1,x_1): |D^2 v(t,x)| > K_1 \}\right|\\
&\leq (\kappa-K_1)^{-p} \int_{Q_{R/2}(t_1,x_1)} |D^2 w|^p \, dx \, dt + K_1^{-p_1} \int_{Q_{R/2}(t_1,x_1)} |D^2v|^{p_1} \, dx \, dt\\
&\leq \frac{N_1^p \gamma |Q_R|}{(\kappa - K_1)^p} + \frac{N_0^{p_1}\gamma|Q_{R/2}|}{K_1^{p_1}}I_{p_1 \neq \infty}\\
&\leq N(d,\alpha) |\cC_{R/4}| \left(\frac{N_1^p \gamma }{(\kappa - K_1)^p} + \gamma \left(\frac{N_0}{K_1}\right)^p I_{p_1 \neq \infty} \right) < \gamma |\cC_{R/4}(t_0,x_0)|,
\end{align*}
provided that we choose a sufficiently large $K_1(\ge N_0)$ depending only on $d$, $\delta$, $\alpha$, and $p$, so that
$$
N(d,\alpha)  (N_0/K_1)^p < 1/2,
$$
and then choose a $\kappa$ depending only on $d$, $\delta$, $\alpha$, and $p$, so that
$$
N(d,\alpha) N_1^p/(\kappa-K_1)^p < 1/2.
$$
Considering \eqref{eq0406_02}, we get a contradiction.
The lemma is proved.
\end{proof}

\section{$L_p$-estimates}
                    \label{sec6}

Now we are ready to give the proof of Theorem \ref{thm0412_1}.

\begin{proof}[Proof of Theorem \ref{thm0412_1}]
We first consider the case when $p\in [2,\infty)$ by using an iterative argument to successively increase the exponent $p$. When $p=2$, the theorem follows from Proposition \ref{prop0720_1}.
Now suppose that the theorem is proved for some $p_0\in [2,\infty)$. Let $p_1=p_1(d,\alpha,p_0)$ be from Proposition \ref{prop0406_1}, and $p\in (p_0,p_1)$.
As in the proof of Proposition \ref{prop0720_1} we assume $u \in C_0^\infty\left([0,T] \times \bR^d\right)$ with $u(0,x) = 0$ and prove the a priori estimate \eqref{eq0411_04}.
Note that
\begin{equation}
                                    \label{eq8.47}
\|D^2u\|_{L_p(\bR^d_T)}^p = p \int_0^\infty |\cA(s)| s^{p-1} \, ds = p
\kappa^p \int_0^\infty |\cA(\kappa s)| s^{p-1} \, ds.
\end{equation}
By Lemmas \ref{lem0409_1} and \ref{lem0409_2} it follows that
\begin{equation}
                                    \label{eq8.46}
|\cA(\kappa s)| \leq N(d,\alpha) \gamma|\cB(s)|
\end{equation}
for all $s \in (0,\infty)$.
Hence, by the Hardy-Littlewood maximal function theorem,
\begin{align*}
&\|D^2u\|_{L_p(\bR^d_T)}^p
\leq N p  \kappa^p \gamma \int_0^\infty |\cB(s)| s^{p-1} \, ds\\
&\le N\gamma \int_0^\infty\left|\left\{ (t,x) \in (-\infty,T) \times \bR^d:\gamma^{-\frac 1{ p_1}}\left( \cS\cM |D^2 u|^{p_0}(t,x)\right)^{\frac 1 {p_0}} > s/2 \right\}\right| s^{p-1} \, ds\\
&\quad +
N\gamma \int_0^\infty\left|\left\{ (t,x) \in (-\infty,T) \times \bR^d:\gamma^{-\frac 1 {p_0}}\left( \cM |f|^{p_0} (t,x) \right)^{\frac 1 {p_0}} > s/2 \right\}\right| s^{p-1} \, ds\\
&\leq N \gamma^{1-p/p_1} \|D^2u\|^p_{L_p(\bR^d_T)} + N \gamma^{1-p/p_0} \|f\|^p_{L_p(\bR^d_T)},
\end{align*}
where $N = N(d,\delta,\alpha,p)$.
Now choose $\gamma \in (0,1)$ so that
$$
N \gamma^{1-p/p_1} < 1/2,
$$
which is possible because $p\in (p_0,p_1)$.
Then we have
$$
\|D^2u\|_{L_p(\bR^d_T)} \leq N \|f\|_{L_p(\bR^d_T)}.
$$
From this and the equation, we arrive at \eqref{eq0411_04} for $p \in (p_0,p_1)$.
We repeat this procedure.
Recall \eqref{eq0411_05}, which shows that each time the increment from $p_0$ to $p_1$ can be made bigger than a positive number depending only on $d$ and $\alpha$.
Thus in finite steps, we get a $p_0$ which is larger than $d/2 + 1/\alpha$, so that $p_1=p_1(d,\alpha,p_0)=\infty$. Therefore, the theorem is proved for any $p\in [2,\infty)$.

For $p \in (1,2)$, we use a duality argument.
We only prove the a priori estimate \eqref{eq0411_04}.
Without loss of generality, assume that $u \in C_0^\infty\left([0,T] \times \bR^d\right)$ with $u(0,x) = 0$ satisfies
$$
-\partial_t^\alpha u + a^{ij} D_{ij} u = f
$$
in $(0,T) \times \bR^d$.
Let $\phi \in L_q(\bR^d_T)$, where $1/p+1/q=1$.
Then
$$
\phi(-t,x) \in L_q\left((-T,0) \times \bR^d\right).
$$
Find $w \in \bH_{q,0}^{\alpha,2}\left((-T,0) \times \bR^d\right)$ satisfying
$$
- \partial_t^\alpha w + a^{ij}(-t) D_{ij} w = \phi(-t,x)
$$
in $(-T,0) \times \bR^d$
with the estimate
$$
\|D^2w\|_{L_q\left((-T,0) \times \bR^d\right)} \leq N \|\phi(-t,x)\|_{L_q\left((-T,0) \times \bR^d\right)} = N \|\phi\|_{L_q(\bR^d_T)},
$$
where
$$
\partial_t^\alpha w = \partial_t I_{-T}^{1-\alpha} w.
$$
Considering $w_k \in C_0^\infty\left([-T,0] \times \bR^d\right)$ with $w_k(-T,0) = 0$ such that $w_k \to w$ in $\bH_{q,0}^{\alpha,2}\left((-T,0) \times \bR^d \right)$, we observe that
\begin{align*}
&\int_0^T \int_{\bR^d} \phi D^2 u  \, dx \, dt = \int_{-T}^0 \int_{\bR^d} \phi(-t,x) D^2 u(-t,x) \, dx \, dt\\
&= \int_{-T}^0 \int_{\bR^d} \left(-\partial_t^\alpha w + a^{ij}(-t) D_{ij} w \right) D^2 u(-t,x) \, dx \, dt\\
&= \int_0^T \int_{\bR^d} \left( -\partial_t^\alpha u(t,x) + a^{ij}(t)D_{ij} u(t,x) \right) D^2 w(-t,x) \, dx \, dt\\
&= \int_0^T \int_{\bR^d} f(t,x) D^2 w(-t,x) \, dx \, dt \leq N\|f\|_{L_p(\bR^d_T)} \|\phi\|_{L_q(\bR^d_T)}.
\end{align*}
It then follows that
$$
\|D^2u\|_{L_p(\bR^d_T)} \leq N \|f\|_{L_p(\bR^d_T)},
$$
from which and the equation, we finally obtain \eqref{eq0411_04}.
\end{proof}

To prove Theorem \ref{main_thm}, we extend Proposition \ref{prop0406_1} to the case when $a^{ij}=a^{ij}(t,x)$ satisfying Assumption \ref{assump2.2}.

\begin{proposition}
							\label{prop0515_1}
Let $p\in (1,\infty)$, $\alpha,\gamma_0 \in (0,1)$, $T \in (0,\infty)$, $\mu\in (1,\infty)$, $\nu=\mu/(\mu-1)$, and $a^{ij} = a^{ij}(t,x)$ satisfying Assumption \ref{assump2.2} ($\gamma_0$).
Assume that $u \in \bH_{p,0}^{\alpha,2}(\bR^d_T)$ vanishes for $x\notin B_{R_0}(x_1)$ for some $x_1\in \bR^d$, and satisfies
\eqref{eq0411_03} in $(0,T) \times \bR^d$.
Then there exists
$$
p_1 = p_1(d, \alpha,p)\in (p,\infty]
$$
satisfying \eqref{eq0411_05} and the following.
For $(t_0,x_0) \in [0,T] \times \bR^d$ and $R \in (0,\infty)$,
there exist
$$
w \in \bH_{p,0}^{\alpha,2}((t_0-R^{2/\alpha}, t_0)\times \bR^d), \quad v \in \bH_{p,0}^{\alpha,2}((S,t_0) \times \bR^d),
$$
where $S = \min\{0, t_0 - R^{2/\alpha}\}$,
such that $u = w + v$ in $Q_R(t_0,x_0)$,
$$
\left( |D^2w|^p \right)_{Q_R(t_0,x_0)}^{1/p} \le N \left( |f|^p \right)_{Q_{2R}(t_0,x_0)}^{1/p}+N\gamma_0^{1/(p\nu)}\left( |D^2 u|^{p\mu} \right)_{Q_{2R}(t_0,x_0)}^{1/(p\mu)},
$$
and
\begin{multline*}
\left( |D^2v|^{p_1} \right)_{Q_{R/2}(t_0,x_0)}^{1/p_1} \leq N \left( |f|^p \right)_{Q_{2R}(t_0,x_0)}^{1/p}+N\gamma_0^{1/(p\nu)}\left( |D^2 u|^{p\mu} \right)_{Q_{2R}(t_0,x_0)}^{1/(p\mu)}
\\
+ N \sum_{k=0}^\infty 2^{-k\alpha + 2} \left( \dashint_{\!t_0 - (2^{k+1}+1)R^{2/\alpha}}^{\,\,\,t_0} \dashint_{B_R(x_0)} |D^2u(s,y)|^p \, dy \, ds \right)^{1/p},
\end{multline*}
where $N=N(d,\delta, \alpha,p,\mu)$.
\end{proposition}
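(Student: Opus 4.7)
The plan is to run a freezing-of-coefficients argument that reduces to Proposition \ref{prop0406_1}. Fix $(t_0,x_0)\in[0,T]\times\bR^d$ and $R>0$, and choose an auxiliary $t$-only coefficient $\bar a^{ij}(t)$ tailored to this cylinder, as follows. If $2R\le R_0$, I take $\bar a^{ij}(t)$ to be the spatial average of $a^{ij}(t,\cdot)$ over $B_{2R}(x_0)$; Assumption \ref{assump2.2} with $r=2R$ then gives $\dashint_{Q_{2R}(t_0,x_0)}|a^{ij}-\bar a^{ij}(t)|\,dx\,dt\le\gamma_0$. If $2R>R_0$, I take $\bar a^{ij}(t)$ to be the spatial average over $B_{R_0}(x_1)$; since $u$ and hence $D^2u$ are supported in $B_{R_0}(x_1)$, the error $(\bar a^{ij}-a^{ij})D_{ij}u$ is supported there as well, and Remark \ref{rem2.3} (applied with $a=t_0-(2R)^{2/\alpha}$ and $b=t_0$, for which $b-a=(2R)^{2/\alpha}>R_0^{2/\alpha}$) yields the analogous averaged bound, up to a harmless universal factor.

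With this choice, I rewrite the equation as
\begin{equation*}
-\partial_t^\alpha u + \bar a^{ij}(t) D_{ij} u = \tilde f := f + \bigl(\bar a^{ij}(t)-a^{ij}(t,x)\bigr)D_{ij}u,
\end{equation*}
so the coefficients of the leading term depend only on $t$ and Proposition \ref{prop0406_1} applies. This produces $w\in\bH_{p,0}^{\alpha,2}((t_0-R^{2/\alpha},t_0)\times\bR^d)$ and $v\in\bH_{p,0}^{\alpha,2}((S,t_0)\times\bR^d)$ with $u=w+v$ in $Q_R(t_0,x_0)$ satisfying
\begin{equation*}
\bigl(|D^2w|^p\bigr)^{1/p}_{Q_R(t_0,x_0)}\le N\bigl(|\tilde f|^p\bigr)^{1/p}_{Q_{2R}(t_0,x_0)}
\end{equation*}
and the analog of \eqref{eq0411_01} with $\tilde f$ in place of $f$. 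It remains only to bound $(|\tilde f|^p)^{1/p}_{Q_{2R}}$ by the two quantities appearing in the statement.

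To do this, split $\tilde f = f + (\bar a - a)D^2 u$ and handle the second piece by H\"older's inequality with conjugate exponents $\mu$ and $\nu$:
\begin{equation*}
\Bigl(\dashint_{Q_{2R}}|(\bar a-a)D^2u|^p\,dx\,dt\Bigr)^{1/p}
\le \Bigl(\dashint_{Q_{2R}}|\bar a-a|^{p\nu}\,dx\,dt\Bigr)^{1/(p\nu)}
\Bigl(\dashint_{Q_{2R}}|D^2u|^{p\mu}\,dx\,dt\Bigr)^{1/(p\mu)}.
\end{equation*}
Using the uniform bound $|\bar a-a|\le 2\delta^{-1}$ to pull out $p\nu-1$ factors, together with the averaged oscillation bound obtained in the first paragraph, gives
\begin{equation*}
\Bigl(\dashint_{Q_{2R}}|\bar a-a|^{p\nu}\,dx\,dt\Bigr)^{1/(p\nu)}\le N(\delta,p,\nu)\,\gamma_0^{1/(p\nu)}.
\end{equation*}
In the case $2R>R_0$, the same bound is derived after first restricting the integral to $B_{R_0}(x_1)$ (using $D^2u\equiv0$ outside) and comparing $|Q_{2R}|$ with $(2R)^{2/\alpha}|B_{R_0}|$; the resulting constant depends only on $d,\delta,p,\nu$. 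Plugging these into the estimates supplied by Proposition \ref{prop0406_1} yields the two desired inequalities with the exponent $p_1$ unchanged from that proposition, hence still satisfying \eqref{eq0411_05}.

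The main obstacle I anticipate is the bookkeeping in the large-$R$ regime $2R>R_0$: one has to verify that restricting the Hölder estimate to $B_{R_0}(x_1)$ and then returning to the full cylinder $Q_{2R}(t_0,x_0)$ does not introduce factors that depend badly on $R/R_0$. This is exactly where the support hypothesis on $u$ is used decisively, and where Remark \ref{rem2.3} replaces the direct application of Assumption \ref{assump2.2}. Once these averages are controlled, the rest of the argument is a mechanical transcription of the proof of Proposition \ref{prop0406_1}.
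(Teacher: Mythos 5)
Your proof is correct and follows the paper's own route: freeze the leading coefficients to a $t$-only $\bar a^{ij}(t)$, rewrite the equation as $-\partial_t^\alpha u + \bar a^{ij}(t)D_{ij}u = \tilde f := f + (\bar a^{ij}(t)-a^{ij})D_{ij}u$, apply Proposition~\ref{prop0406_1}, and control the extra term via H\"older with exponents $\mu,\nu$ together with the small-mean-oscillation bound and the spatial support of $u$. The only cosmetic difference is the reference cylinder in the regime $2R>R_0$ --- you invoke Remark~\ref{rem2.3} over the full time interval $(t_0-(2R)^{2/\alpha},t_0)$ and restrict spatially to $B_{R_0}(x_1)$, whereas the paper sets $Q=(t_0-(2R_0)^{2/\alpha},t_0)\times B_{R_0}(x_1)$ --- but either choice produces the required bound $N\gamma_0^{1/(p\nu)}$ on $(|(\bar a-a)1|^{p\nu})_{Q_{2R}}^{1/(p\nu)}$, and the rest of the argument is identical.
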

\begin{proof}
Denote
$$
Q:=\left\{
    \begin{array}{ll}
      Q_{2R}(t_0,x_0) & \hbox{when $2R\le R_0$;} \\
      (t_0 - (2R_0)^{2/\alpha}, t_0)\times B_{R_0}(x_1) & \hbox{otherwise.}
    \end{array}
  \right.
$$
Note that in both cases $|Q|\le |Q_{2R}(t_0,x_0)|$.
Thus,
by Assumption \ref{assump2.2} and Remark \ref{rem2.3}, we can find $\bar a^{ij}=\bar a^{ij}(t)$ such that
\begin{equation}
                            \label{eq8.09}
\sup_{i,j}\dashint_{Q_{2R}(t_0,x_0)}|a^{ij}-\bar a^{ij}(t)|1_Q\,dx\,dt
\le \sup_{i,j}\dashint_{Q}|a^{ij}-\bar a^{ij}(t)|\,dx\,dt\le 2\gamma_0,
\end{equation}
where $1_Q$ is the indicator function of $Q$.
We then rewrite \eqref{eq0411_03} into
$$
-\partial_t^\alpha u + \bar a^{ij}(t)D_{ij} u = \tilde f
:=f+(\bar a^{ij}(t)-a^{ij})D_{ij} u.
$$
Now that Theorem \ref{thm0412_1} holds for this equation with the same $p$,  it follows from Proposition \ref{prop0406_1} that there exist
$$
w, \, v \in \bH_p^{\alpha,2}((t_0-R^{2/\alpha}, t_0)\times \bR^d)
$$
such that $u = w + v$ in $Q_R(t_0,x_0)$, and \eqref{eq8.13}--\eqref{eq0411_01} hold with $\tilde f$ in place of $f$. To conclude the proof, it remains to notice that by H\"older's inequality and \eqref{eq8.09},
\begin{align*}
&\left( |\tilde f|^p \right)_{Q_{2R}(t_0,x_0)}^{1/p}
\le \left( |f|^p \right)_{Q_{2R}(t_0,x_0)}^{1/p}+
\left( |(\bar a^{ij}(t)-a^{ij})D_{ij} u|^p \right)_{Q_{2R}(t_0,x_0)}^{1/p}\\
&\le \left( |f|^p \right)_{Q_{2R}(t_0,x_0)}^{1/p}+
N\left( |(\bar a-a)1_{Q}|^{p\nu} \right)_{Q_{2R}(t_0,x_0)}^{1/(p\nu)}
\left( |D^2 u|^{p\mu} \right)_{Q_{2R}(t_0,x_0)}^{1/(p\mu)}\\
&\le N \left( |f|^p \right)_{Q_{2R}(t_0,x_0)}^{1/p}+N\gamma_0^{1/(p\nu)}\left( |D^2 u|^{p\mu} \right)_{Q_{2R}(t_0,x_0)}^{1/(p\mu)}.
\end{align*}
\end{proof}

Now we define $\cA(s)$ as in \eqref{eq0406_04}, but instead of \eqref{eq0406_05} we define
\begin{multline*}
\cB(s) = \big\{ (t,x) \in (-\infty,T) \times \bR^d:
\gamma^{-1/p}\left( \cM |f|^p (t,x) \right)^{1/p}
\\
+\gamma^{-1/p}\gamma_0^{1/(p\nu)}\left( \cM |D^2 u|^{p\mu} (t,x) \right)^{1/(p\mu)}
+ \gamma^{-1/p_1}\left( \cS\cM |D^2 u|^p(t,x)\right)^{1/p} > s  \big\}.
\end{multline*}

By following the proof of Lemma \ref{lem0409_1} with minor modifications, from Proposition \ref{prop0515_1}, we get the following lemma.

\begin{lemma}
							\label{lem6.3}
Let $p\in (1,\infty)$, $\alpha,\gamma_0,\gamma \in (0,1)$, $T \in (0,\infty)$, $R \in (0,\infty)$, $\mu\in (1,\infty)$, $\nu=\mu/(\mu-1)$, and $a^{ij} = a^{ij}(t,x)$ satisfying Assumption \ref{assump2.2} ($\gamma_0$).
Assume that $u \in \bH_{p,0}^{\alpha,2}(\bR^d_T)$ vanishes for $x\notin B_{R_0}(x_1)$ for some $x_1\in \bR^d$, and satisfies \eqref{eq0411_03} in $(0,T) \times \bR^d$.
Then, there exists a constant $\kappa = \kappa(d,\delta,\alpha,p,\mu) > 1$ such that the following hold:
for $(t_0,x_0) \in (-\infty,T] \times \bR^d$ and $s>0$, if
\begin{equation*}
|\cC_{R/4}(t_0,x_0) \cap \cA(\kappa s)|  \geq \gamma |\cC_{R/4}(t_0,x_0)|,
\end{equation*}
then we have
$$
\hat\cC_{R/4}(t_0,x_0) \subset \cB(s).
$$
\end{lemma}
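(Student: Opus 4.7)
The plan is to follow the template of Lemma \ref{lem0409_1} very closely, using Proposition \ref{prop0515_1} in place of Proposition \ref{prop0406_1}. Scaling by $s$ allows me to reduce to the case $s=1$, and the trivial case $t_0 + (R/4)^{2/\alpha} < 0$ is disposed of exactly as before (the intersection with $\cA$ is empty since $u\equiv 0$ for $t<0$). So I may assume there is a point $(s_*,y_*)\in \hat\cC_{R/4}(t_0,x_0)$ such that
\[
\gamma^{-1/p}(\cM|f|^p)^{1/p}+\gamma^{-1/p}\gamma_0^{1/(p\nu)}(\cM|D^2u|^{p\mu})^{1/(p\mu)}+\gamma^{-1/p_1}(\cS\cM|D^2u|^p)^{1/p}\le 1
\]
at $(s_*,y_*)$, and derive a contradiction.

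Setting $t_1:=\min\{t_0+(R/4)^{2/\alpha},T\}$, $x_1:=x_0$, I apply Proposition \ref{prop0515_1} to obtain the decomposition $u=w+v$ on $Q_R(t_1,x_1)$ and the associated $L_p$ and $L_{p_1}$ bounds. Because $(s_*,y_*)$ lies in $Q_{2R}(t_1,x_1)$ as well as in every slab $(t_1-(2^{k+1}+1)R^{2/\alpha},t_1)\times B_R(x_1)$, the averages on the right-hand sides of these two estimates are dominated by $\cM|f|^p(s_*,y_*)$, $\cM|D^2u|^{p\mu}(s_*,y_*)$ and $\cS\cM|D^2u|^p(s_*,y_*)$ respectively. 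Inserting the bound coming from $(s_*,y_*)\notin\cB(1)$ and noting that $\gamma^{-1/p}\gamma_0^{1/(p\nu)}(\cM|D^2u|^{p\mu})^{1/(p\mu)}\le 1$ gives $\gamma_0^{1/(p\nu)}(\cM|D^2u|^{p\mu})^{1/(p\mu)}\le \gamma^{1/p}$, the $\gamma_0$-contribution from Proposition \ref{prop0515_1} is absorbed into a $N\gamma^{1/p}$ term. Thus
\[
(|D^2w|^p)^{1/p}_{Q_R(t_1,x_1)}\le N_1\gamma^{1/p},\qquad (|D^2v|^{p_1})^{1/p_1}_{Q_{R/2}(t_1,x_1)}\le N_0\gamma^{1/p_1},
\]
with $N_0,N_1$ depending only on $d,\delta,\alpha,p,\mu$.

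With these two bounds in hand, the Chebyshev step is literally the same as in Lemma \ref{lem0409_1}. Splitting $|D^2u|>\kappa$ into $|D^2w|>\kappa-K_1$ and $|D^2v|>K_1$, applying Chebyshev at exponents $p$ and $p_1$ respectively, and using $|Q_R|,|Q_{R/2}|\le N(d,\alpha)|\cC_{R/4}|$, yields
\[
|\cC_{R/4}(t_0,x_0)\cap\cA(\kappa)|\le N(d,\alpha)|\cC_{R/4}|\Bigl(\tfrac{N_1^p\gamma}{(\kappa-K_1)^p}+\gamma\bigl(\tfrac{N_0}{K_1}\bigr)^{p}I_{p_1\ne\infty}\Bigr).
\]
Choosing $K_1$ large (depending on $d,\delta,\alpha,p,\mu$) so that $N(d,\alpha)(N_0/K_1)^p<1/2$, and then $\kappa$ large so that $N(d,\alpha)N_1^p/(\kappa-K_1)^p<1/2$, produces the strict inequality $|\cC_{R/4}(t_0,x_0)\cap\cA(\kappa)|<\gamma|\cC_{R/4}(t_0,x_0)|$, contradicting the hypothesis.

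The only substantive new ingredient compared to Lemma \ref{lem0409_1} is bookkeeping for the $\gamma_0$-perturbation term produced by freezing $a^{ij}$ in the spatial variable, and this is precisely why $\cB(s)$ has been enlarged by the $\cM|D^2u|^{p\mu}$ maximal function term with the matching $\gamma^{-1/p}\gamma_0^{1/(p\nu)}$ weight; once that absorption is carried out, everything else is routine. I do not anticipate a genuine obstacle here — the only care needed is to verify that the strong-maximal-function argument from Lemma \ref{lem0409_1} still applies at the point $(s_*,y_*)$, which it does because $\hat\cC_{R/4}(t_0,x_0)$ still sits inside every relevant backwards slab centred at $(t_1,x_1)$, so I can quote the same containment inclusions verbatim.
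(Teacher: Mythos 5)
Your proposal is correct and is precisely what the paper intends: the paper gives no separate proof of Lemma \ref{lem6.3}, stating only that it follows from Proposition \ref{prop0515_1} by repeating the proof of Lemma \ref{lem0409_1} ``with minor modifications,'' and your argument — including the reduction to $s=1$, the choice $(t_1,x_1)$, the domination of the averages over $Q_{2R}(t_1,x_1)$ and the backward slabs by $\cM|f|^p$, $\cM|D^2u|^{p\mu}$, $\cS\cM|D^2u|^p$ at the point $(s_*,y_*)\notin\cB(1)$, the absorption $\gamma_0^{1/(p\nu)}\bigl(\cM|D^2u|^{p\mu}(s_*,y_*)\bigr)^{1/(p\mu)}\le\gamma^{1/p}$, and the Chebyshev split with $K_1\ge N_0$ (so that $(N_0/K_1)^{p_1}\le (N_0/K_1)^p$) — is exactly that modification carried out. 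The only cosmetic note is that the dependence of $\kappa$ on $\mu$ enters solely through the constants $N_0,N_1$ inherited from Proposition \ref{prop0515_1}, as you correctly record.
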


Finally, we give the proof of Theorem \ref{main_thm}.
\begin{proof}[Proof of Theorem \ref{main_thm}]
As before we may assume that $u \in C_0^\infty\left([0,T] \times \bR^d\right)$ with $u(0,x) = 0$ and prove the a priori estimate \eqref{eq0411_04c}. We divide the proof into three steps.

{\em Step 1.} We assume that $u$ vanishes for $x\notin B_{R_0}(x_1)$ for some $x_1\in \bR^d$, and $b\equiv c\equiv 0$. We take $p_0\in (1,p)$ and $\mu\in (1,\infty)$ depending only on $p$ such that $p_0<p_0\mu<p<p_1$, where $p_1=p_1(d,\alpha,p_0)$ is taken from Proposition \ref{prop0515_1}. By Lemmas \ref{lem6.3} and \ref{lem0409_2}, we have \eqref{eq8.46}, which together with \eqref{eq8.47} and the Hardy-Littlewood maximal function theorem implies that
{\small \begin{align*}
&\|D^2u\|_{L_p(\bR^d_T)}^p
\leq N p  \kappa^p \gamma \int_0^\infty |\cB(s)| s^{p-1} \, ds\\
&\le N\gamma \int_0^\infty\left|\left\{ (t,x) \in (-\infty,T) \times \bR^d:\gamma^{-\frac 1 {p_1}}\left( \cS\cM |D^2 u|^{p_0}(t,x)\right)^{\frac 1 {p_0}} > s/3 \right\}\right| s^{p-1} \, ds\\
&\quad+
N\gamma \int_0^\infty\left|\left\{ (t,x) \in (-\infty,T) \times \bR^d:\gamma^{-\frac 1 {p_0}}\left( \cM |f|^{p_0} (t,x) \right)^{\frac 1 {p_0}} > s/3 \right\}\right| s^{p-1} \, ds\\
&\quad+
N\gamma \int_0^\infty\left|\left\{ (t,x) \in (-\infty,T) \times \bR^d:\gamma^{-\frac 1 {p_0}}\gamma_0^{\frac 1 {p_0\nu}}
\left( \cM |D^2 u|^{p_0\mu} (t,x) \right)^{\frac 1 {p_0\mu}} > s/3 \right\}\right| s^{p-1} \, ds\\
&\leq N (\gamma^{1-p/p_1}+\gamma^{1-p/p_0}\gamma_0^{p/(p_0\nu)}) \|D^2u\|^p_{L_p(\bR^d_T)} + N \gamma^{1-p/p_0} \|f\|^p_{L_p(\bR^d_T)},
\end{align*}}
where $N = N(d,\delta,\alpha,p)$.
Now choose $\gamma \in (0,1)$ sufficiently small and then $\gamma_0$ sufficiently small, depending only on $d$, $\delta$, $\alpha$, and $p$, so that
$$
N (\gamma^{1-p/p_1}+\gamma^{1-p/p_0}\gamma_0^{p/(p_0\nu)}) < 1/2.
$$
Then we have
$$
\|D^2u\|_{L_p(\bR^d_T)} \leq N(d,\delta,\alpha,p) \|f\|_{L_p(\bR^d_T)}.
$$
From this and the equation, we arrive at \eqref{eq0411_04}.

{\em Step 2.} In this step, we show that under the assumptions of the theorem with $\gamma_0$ being the constant from the previous step, we have
\begin{equation}
							\label{eq8.59}
\|\partial_t^\alpha u\|_{L_p(\bR^d_T)} + \|D^2 u\|_{L_p(\bR^d_T)} \leq N \|f\|_{L_p(\bR^d_T)}+N_1\|u\|_{L_p(\bR^d_T)},
\end{equation}
where $N=N(d,\delta,\alpha,p)$ and $N_1=N_1(d,\delta,\alpha,p,R_0)$. By moving the lower-order terms to the right-hand side of the equation, and using interpolation inequalities, without loss of generality, we may assume that $b\equiv c\equiv 0$.
Now \eqref{eq8.59} follows a standard partition of unity argument with respect to $x$ and interpolation inequalities.

{\em Step 3.} In this step, we show how to get rid of the second term on the right-hand side of \eqref{eq8.59} and conclude the proof of \eqref{eq0411_04c}. By \eqref{eq8.59} and Lemma \ref{lem1110_1}, we can find $q\in (p,\infty)$, depending on $\alpha$ and $p$, such that for any $T'\in (0,T]$,
\begin{align*}
\|u\|_{L_p\left(\bR^d; L_q(0,T')\right)}
&\le N(\alpha,p,T)\|\partial^t_\alpha u\|_{L_p((0,T');L_p(\bR^d))}\\
&\le N\|f\|_{L_p(\bR^d_{T'})}+N_1\|u\|_{L_p(\bR^d_{T'})},
\end{align*}
where $N=N(d,\delta,\alpha,p,T)$ and $N_1=N_1(d,\delta,\alpha,p,T,R_0)$. Next we take a sufficiently large integer $m=m(d,\delta,\alpha,p,T,R_0)$ such that $N_1(T/m)^{1/p-1/q}\le 1/2$. Then for any $j=0,2,\ldots,m-1$, by H\"older's inequality and the above inequality with $T'=(j+1)T/m$, we have
\begin{align*}
&\|u\|_{L_p((jT/m,(j+1)T/m);L_p(\bR^d))}
\le (T/m)^{1/p-1/q}\|u\|_{L_p\left(\bR^d;L_q(jT/m,(j+1)T/m)\right)}\\
&\le N\|f\|_{L_p(\bR^d_T)}+\frac 1 2\|u\|_{L_p((0,(j+1)T/m);L_p(\bR^d))}.
\end{align*}
This implies that
$$
\|u\|_{L_p((jT/m,(j+1)T/m);L_p(\bR^d))}
\le N\|f\|_{L_p(\bR^d_T)}+\|u\|_{L_p((0,jT/m);L_p(\bR^d))}.
$$
By an induction on $j$, we obtain
$$
\|u\|_{L_p(\bR^d_T)}\le N\|f\|_{L_p(\bR^d_T)},
$$
which together with \eqref{eq8.59} yields \eqref{eq0411_04c}. The theorem is proved.
\end{proof}

\section*{Acknowledgment}
The authors would like to thank Nicolai V. Krylov for telling us a simple proof of \eqref{eq0904_01}, and the referee for helpful comments.
The authors also thank Kyeong-hun Kim for bringing our attention to the problems discussed in this paper.

\appendix

\section{Sobolev embeddings for $\bH_{p,0}^{\alpha,2}$ and a ``crawling of ink spots'' lemma}

In the proof of Lemma \ref{lem0123_1} as well as in several places of this paper, we use the following properties of the operator $I^\alpha$.
In the sequel, let $T\in (0,\infty)$ be a constant.

\begin{lemma}
							\label{lem1115_1}
Let
$p \in (1,\infty)$, $q \in (1,\infty)$, and $\alpha \in (0, 1/p)$ satisfy
$$
q > p, \quad \alpha - 1/p = - 1/q.
$$
Then we have
$$
\|I^\alpha \psi\|_{L_q(0,T)} \le N(\alpha,p)\|\psi\|_{L_p(0,T)}
$$
for $\psi \in L_p(0,T)$.
\end{lemma}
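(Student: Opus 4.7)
The plan is to reduce the inequality to a convolution estimate on the whole line and invoke the classical Hardy--Littlewood theorem on fractional integration (equivalently, the one-dimensional Hardy--Littlewood--Sobolev inequality for Riesz-type potentials).

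First I would extend $\psi$ by zero outside $(0,T)$, which preserves its $L_p$ norm. Rewriting the definition of $I^\alpha\psi$, for $t\in(0,T)$ one has
$$
\Gamma(\alpha)\,I^\alpha\psi(t)=(K_\alpha*\psi)(t),\qquad K_\alpha(\tau):=\tau^{\alpha-1}\mathbf{1}_{\{\tau>0\}}.
$$
A direct computation gives $|\{\tau\in\bR:K_\alpha(\tau)>\lambda\}|=\lambda^{-1/(1-\alpha)}$, so $K_\alpha$ lies in the weak Lebesgue space $L^{r,\infty}(\bR)$ with $r=1/(1-\alpha)$, and the index relation $1+1/q=1/p+1/r$ is exactly $\alpha-1/p=-1/q$.

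Next I would apply the weak-type Young inequality: for $1<p<q<\infty$ satisfying the above index relation,
$$
\|K_\alpha*\psi\|_{L_q(\bR)}\le N(\alpha,p)\|\psi\|_{L_p(\bR)}.
$$
Restricting the left-hand side to $(0,T)$ gives the asserted bound.

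The only real subtlety is verifying that we are in the admissible range of exponents for the weak Young inequality. The hypotheses $p>1$ and $\alpha\in(0,1/p)$ put $q=p/(1-\alpha p)$ strictly in $(p,\infty)$, i.e., away from the $p=1$ and $q=\infty$ endpoints where weak Young would fail. If a fully self-contained argument is preferred, one can derive the weak-type $(p,q)$ bound directly: for fixed $\lambda$, split $K_\alpha=K_\alpha\mathbf{1}_{\{\tau\le R\}}+K_\alpha\mathbf{1}_{\{\tau>R\}}$, estimate the first piece by H\"older (or by the Hardy--Littlewood maximal function of $\psi$) and the second by H\"older's inequality, then optimize in $R$ as a function of $\lambda$ and $\|\psi\|_{L_p}$; Marcinkiewicz interpolation upgrades this weak-type bound to the strong-type estimate since $p>1$.
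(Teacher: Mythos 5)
Your proof is correct but proceeds by a genuinely different route than the paper. The paper does not argue at all: its proof of this lemma is a one-line citation to \cite[Theorem 4]{MR1544927}, Hardy and Littlewood's original 1928 paper, which establishes precisely this fractional-integral inequality on a finite interval by an elementary rearrangement argument. You instead extend by zero to the line, view $\Gamma(\alpha)I^\alpha\psi$ as convolution with the kernel $K_\alpha(\tau)=\tau^{\alpha-1}$ supported on $\{\tau>0\}$, observe $K_\alpha\in L^{r,\infty}(\bR)$ with $r=1/(1-\alpha)$, and invoke the weak Young inequality (equivalently, the one-dimensional Hardy--Littlewood--Sobolev theorem); the index relation $1+1/q=1/p+1/r$ reduces exactly to the hypothesis $\alpha-1/p=-1/q$, and the conditions $p>1$, $\alpha<1/p$ keep $q=p/(1-\alpha p)$ strictly inside $(p,\infty)$, away from the endpoints where weak Young fails. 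Your fallback, a direct weak-type $(p,q)$ estimate via a split of $K_\alpha$ at a threshold followed by Marcinkiewicz interpolation, is also sound and amounts to a self-contained proof of the special case of HLS you need. Compared with the paper's citation (and with Hardy--Littlewood's original rearrangement proof), your route uses heavier machinery but makes the $T$-independence of the constant transparent, since after zero-extension the estimate is a whole-line convolution bound, and it ties the lemma to a standard off-the-shelf theorem rather than to a 1928 reference.
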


\begin{proof}
See \cite[Theorem 4]{MR1544927}.
\end{proof}

\begin{lemma}
							\label{lem1018_01}
Let $\alpha \in (0,1)$, $\psi \in L_p(0,T)$, and
$p \in [1,\infty]$ and $q \in [1,\infty]$ satisfy
\begin{equation*}
\alpha - 1/p > - 1/q.
\end{equation*}
Then we have
\begin{equation*}
\|I^\alpha \psi\|_{L_q(0,T)} \le N(\alpha,p,q)T^{\alpha-1/p+1/q} \|\psi\|_{L_p(0,T)}.
\end{equation*}
\end{lemma}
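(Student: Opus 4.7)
The plan is to split into three regimes according to how the exponent $p$ compares to the critical value $1/\alpha$, using direct H\"older when the convolution kernel $(t-s)^{\alpha-1}$ is $L^{p'}$-integrable, and interpolating with Lemma \ref{lem1115_1} otherwise.

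\smallskip
\noindent\textbf{Regime 1: $p \in (1/\alpha, \infty]$.} Here $(1-\alpha)p' < 1$ (interpret $p'=1$ when $p=\infty$). Applying H\"older's inequality in $s$ inside
$$
|I^\alpha \psi(t)| \le \frac{1}{\Gamma(\alpha)}\int_0^t (t-s)^{\alpha-1}|\psi(s)|\,ds
$$
with exponents $(p',p)$ yields the pointwise estimate
$$
|I^\alpha \psi(t)| \le N(\alpha,p)\, t^{\alpha-1/p}\, \|\psi\|_{L_p(0,T)},
$$
since $\bigl(\int_0^t (t-s)^{(\alpha-1)p'}ds\bigr)^{1/p'}=c(\alpha,p)\,t^{\alpha-1/p}$. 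The hypothesis $\alpha-1/p>-1/q$ is exactly what makes $t \mapsto t^{(\alpha-1/p)q}$ integrable on $(0,T)$, and integrating produces the desired factor $T^{\alpha-1/p+1/q}$.

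\smallskip
\noindent\textbf{Regime 2: $p \in (1, 1/\alpha]$.} Here the direct H\"older approach fails, so we route through the sharp Hardy--Littlewood--Sobolev bound of Lemma \ref{lem1115_1}. Choose $\tilde p \in (1, 1/\alpha)$ with $\tilde p \le p$, and set $\tilde q := \tilde p/(1-\alpha \tilde p)$, which satisfies $\alpha-1/\tilde p = -1/\tilde q$. I claim we can pick $\tilde p$ so that $\tilde q \ge q$: if $p<1/\alpha$, take $\tilde p = p$ and observe that $1/\tilde q = 1/p - \alpha < 1/q$ is just the hypothesis; if $p=1/\alpha$, take $\tilde p$ slightly below $1/\alpha$ so that $\tilde q$ (which blows up as $\tilde p \to 1/\alpha^-$) exceeds $q$. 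Then on the bounded interval $(0,T)$, H\"older's inequality gives $\|\psi\|_{L_{\tilde p}(0,T)} \le T^{1/\tilde p-1/p}\|\psi\|_{L_p(0,T)}$, Lemma \ref{lem1115_1} gives $\|I^\alpha \psi\|_{L_{\tilde q}(0,T)} \le N\|\psi\|_{L_{\tilde p}(0,T)}$, and one more H\"older gives $\|I^\alpha \psi\|_{L_q(0,T)} \le T^{1/q-1/\tilde q}\|I^\alpha \psi\|_{L_{\tilde q}(0,T)}$. The total power of $T$ is $1/q - 1/\tilde q + 1/\tilde p - 1/p = \alpha - 1/p + 1/q$, using $1/\tilde p - 1/\tilde q = \alpha$.

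\smallskip
\noindent\textbf{Regime 3: $p = 1$.} Here the hypothesis reads $\alpha-1>-1/q$, i.e.\ $q<1/(1-\alpha)$, and I would use Minkowski's integral inequality directly:
$$
\|I^\alpha \psi\|_{L_q(0,T)}
\le \frac{1}{\Gamma(\alpha)}\int_0^T |\psi(s)|\,\bigl\|(t-s)^{\alpha-1}\mathbf{1}_{(s,T)}(t)\bigr\|_{L_q(dt)}\,ds.
$$
Since $(\alpha-1)q>-1$ by hypothesis, the inner norm equals $c(\alpha,q)(T-s)^{\alpha-1+1/q}\le c(\alpha,q)T^{\alpha-1+1/q}$, and factoring this out gives the desired estimate with exponent $\alpha-1+1/q=\alpha-1/p+1/q$.

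\smallskip
The only subtle point is the borderline case $p=1/\alpha$, which is at the edge of both Regime 1 and Regime 2 but is safely absorbed by Regime 2 via the choice $\tilde p<p$. The case $q=\infty$ forces $\alpha>1/p$, hence $p>1/\alpha$, so it falls into Regime 1 (the integral over $(0,T)$ is replaced by the supremum, which is finite since $t \le T$). In every regime the final constant depends only on $\alpha$, $p$, and $q$.
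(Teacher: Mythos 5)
Your proof is correct, and it takes a genuinely different route from the paper's. The paper decomposes into three cases by the endpoint structure of $(p,q)$: it handles $p=1$ (any $q$) via the splitting trick $|\psi(s)|=|\psi(s)|^{1/q}|\psi(s)|^{(q-1)/q}$ followed by H\"older and Fubini; then $p\in(1,\infty]$, $q<\infty$ by locating $(p_1,q_1)$ with $p_1\le p$, $q_1\ge q$, $\alpha-1/p_1=-1/q_1$ and routing through Lemma \ref{lem1115_1} plus two applications of H\"older on $(0,T)$ (the same chain as your Regime~2, though the paper doesn't restrict to $p\le 1/\alpha$); and finally $p\in(1,\infty]$, $q=\infty$ via the direct pointwise H\"older bound. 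You instead split by how $p$ compares with the critical value $1/\alpha$: for $p>1/\alpha$ you run the pointwise H\"older bound and then integrate in $t$, covering both finite and infinite $q$ in one stroke (the paper reserves this for $q=\infty$ only); for $p\in(1,1/\alpha]$ you use the same HLS-plus-H\"older interpolation as the paper; and for $p=1$ you use Minkowski's integral inequality, which is cleaner than the paper's splitting trick and arguably more transparent. One small cosmetic remark: in your Regime~1 the hypothesis $\alpha-1/p>-1/q$ is automatic (since $\alpha-1/p>0\ge -1/q$), so the sentence claiming it is ``exactly what makes $t^{(\alpha-1/p)q}$ integrable'' is technically the right biconditional but in this regime is not a constraint at all — not an error, just a slightly misleading emphasis. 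Both proofs produce the constant $N(\alpha,p,q)$ and the stated power of $T$.
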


\begin{proof}
First, consider $p=1$.
In this case, $q \in [1,1/(1-\alpha))$.
Then
\begin{align*}
&\Gamma(\alpha)|I^\alpha \psi(t)| \le \int_0^t (t-s)^{\alpha-1} |\psi(s)| \, ds
= \int_0^t (t-s)^{\alpha-1} |\psi(s)|^{\frac{1}{q}} |\psi(s)|^{\frac{q-1}{q}} \, ds\\
&\le \left(\int_0^t (t-s)^{(\alpha-1)q}|\psi(s)| \, ds\right)^{\frac{1}{q}} \left( \int_0^t |\psi(s)| \, ds \right)^{\frac{q-1}{q}}.
\end{align*}
Thus,
\begin{align*}
&\| I^{\alpha} \psi(t)\|_{L_q(0,T)} \leq N(\alpha) \|\psi\|_{L_1(0,T)}^{1-\frac{1}{q}}
\left(\int_0^T \int_0^t (t-s)^{(\alpha-1)q}|\psi(s)| \, ds \, dt\right)^{\frac{1}{q}}\\
&\leq N(\alpha,q) T^{\alpha -1 + 1/q} \|\psi\|_{L_1(0,T)},
\end{align*}
where we used the condition that $(\alpha-1)q > -1$.

If $p \in (1,\infty]$, $q \in [1,\infty)$, and $\alpha - 1/p > -1/q$, then one can find $p_1 \in (1,p]$, $q_1 \in [q,\infty)$ such that $\alpha - 1/p_1 = -1/q_1$.
The result then follows from  Lemma \ref{lem1115_1} and H\"{o}lder's inequality.

Finally, if
$$
p \in (1,\infty], \quad q = \infty, \quad \alpha - 1/p > 0,
$$
then
\begin{align*}
&\Gamma(\alpha)|I^\alpha \psi(t)| \le \int_0^t (t-s)^{\alpha-1} |\psi(s)| \, ds\\
&\le \left(\int_0^t (t-s)^{(\alpha-1)\frac{p}{p-1}} \, ds\right)^{\frac{p-1}{p}} \left( \int_0^t |\psi(s)|^p \, ds\right)^{\frac{1}{p}}\\
&\le T^{\frac{\alpha p - 1}{p}} \left(\frac{p-1}{\alpha p - 1}\right)^{1-1/p} \|\psi\|_{L_p(0,T)},
\end{align*}
where we again use the condition that $(\alpha-1)p/(p-1) > -1$.
The lemma is proved.
\end{proof}

\begin{remark}
							\label{rem0120_1}
From Lemma \ref{lem1018_01}, if $u = u(t,x) \in L_p(\Omega_T)$, $1\leq p \leq \infty$, then
$I^\alpha u \in L_p(\Omega_T)$ and
$$
\|I^\alpha u \|_{L_p(\Omega_T)} \leq N(\alpha, p) T^\alpha \|u\|_{L_p(\Omega_T)}.
$$
\end{remark}

\begin{lemma}
							\label{lem1115_3}
Let $\psi \in C^1([0,T])$ and $\psi(0) = 0$.
Then
$$
I^\alpha D_t^\alpha \psi
= I^\alpha \partial_t^\alpha \psi = \psi.
$$
\end{lemma}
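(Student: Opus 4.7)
The plan is to reduce everything to the semigroup identity $I^\alpha I^{1-\alpha}\psi' = I^1 \psi'$, which follows from Fubini's theorem and the Beta-function identity $\int_s^t (t-r)^{\alpha-1}(r-s)^{-\alpha}\,dr = \Gamma(\alpha)\Gamma(1-\alpha)$.

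First I would observe that the two definitions coincide under the hypothesis $\psi(0)=0$. Comparing the formulas
$$
D_t^\alpha \psi(t) = \frac{1}{\Gamma(1-\alpha)} \frac{d}{dt}\int_0^t (t-s)^{-\alpha}\psi(s)\,ds,
$$
$$
\partial_t^\alpha \psi(t) = \frac{1}{\Gamma(1-\alpha)} \frac{d}{dt}\int_0^t (t-s)^{-\alpha}\bigl[\psi(s)-\psi(0)\bigr]\,ds,
$$
we see immediately that $D_t^\alpha\psi = \partial_t^\alpha\psi$ whenever $\psi(0)=0$, so it suffices to prove $I^\alpha \partial_t^\alpha \psi = \psi$.

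Next I would use the alternative representation already stated in the text,
$$
\partial_t^\alpha \psi(t) = \frac{1}{\Gamma(1-\alpha)}\int_0^t (t-s)^{-\alpha}\psi'(s)\,ds = I^{1-\alpha}\psi'(t),
$$
which is legitimate since $\psi \in C^1([0,T])$ and the integrand is locally integrable. Applying $I^\alpha$ then gives
$$
I^\alpha \partial_t^\alpha \psi(t) = I^\alpha I^{1-\alpha}\psi'(t).
$$

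Finally I would invoke (or establish on the spot) the semigroup property $I^\alpha I^\beta = I^{\alpha+\beta}$ for $\alpha,\beta>0$. Writing out
$$
I^\alpha I^{1-\alpha}\psi'(t) = \frac{1}{\Gamma(\alpha)\Gamma(1-\alpha)}\int_0^t\!\!\int_0^r (t-r)^{\alpha-1}(r-s)^{-\alpha}\psi'(s)\,ds\,dr,
$$
switching the order of integration by Fubini, and computing the inner integral
$$
\int_s^t (t-r)^{\alpha-1}(r-s)^{-\alpha}\,dr = \Gamma(\alpha)\Gamma(1-\alpha)
$$
(via the substitution $r = s + (t-s)u$), one obtains
$$
I^\alpha I^{1-\alpha}\psi'(t) = \int_0^t \psi'(s)\,ds = \psi(t) - \psi(0) = \psi(t),
$$
using $\psi(0)=0$ in the last step. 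This yields both identities.

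There is no real obstacle here; the only mild technical point is justifying the Beta-type integral computation (easily handled by the substitution above) and noting that $\psi \in C^1$ makes all the integrals absolutely convergent so that Fubini applies without difficulty.
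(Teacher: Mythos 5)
Your proof is correct and follows essentially the same route as the paper: write $\partial_t^\alpha\psi = I^{1-\alpha}\psi'$ (valid since $\psi\in C^1$ and $\psi(0)=0$), apply $I^\alpha$, swap the order of integration via Fubini, and evaluate the Beta-type inner integral to obtain $\int_0^t\psi'(r)\,dr = \psi(t)$. The paper just carries out the Fubini computation directly without naming the semigroup property, but the argument is the same.
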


\begin{proof}
Since $\psi(0) = 0$, we have
$$
D_t^\alpha \psi(t) = \partial_t^\alpha \psi(t) = \frac{1}{\Gamma(1-\alpha)}\int_0^t (t-s)^{-\alpha} \psi'(s) \, ds.
$$
Then
\begin{align*}
&I^\alpha D_t^\alpha \psi (t) = \frac{1}{\Gamma(\alpha)}\frac{1}{\Gamma(1-\alpha)}\int_0^t (t-s)^{\alpha-1} \int_0^s (s-r)^{-\alpha}\psi'(r) \, dr \, ds\\
&=\frac{1}{\Gamma(\alpha)}\frac{1}{\Gamma(1-\alpha)} \int_0^t \psi'(r) \int_r^t (t-s)^{\alpha-1}(s-r)^{-\alpha} \, ds \, dr = \int_0^t \psi'(r) \, dr = \psi(t).
\end{align*}
\end{proof}

\begin{lemma}
							\label{lem1115_2}
Let $p, q \in (1,\infty)$ and $\alpha \in (0,1/p)$ satisfy
$$
\alpha - 1/p = - 1/q.
$$
Then
$$
\|\psi\|_{L_q(0,T)} \le N(\alpha,p)\|\partial_t^\alpha \psi\|_{L_p(0,T)}
$$
for $\psi \in C^1([0,T])$ such that $\psi(0) = 0$.
\end{lemma}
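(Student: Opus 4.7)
The plan is to reduce the claim to a direct composition of two lemmas that have already been established in this appendix: the inversion identity of Lemma \ref{lem1115_3} and the Hardy--Littlewood--Sobolev type bound for the fractional integral $I^\alpha$ in Lemma \ref{lem1115_1}.

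First, since $\psi \in C^1([0,T])$ with $\psi(0) = 0$, the hypotheses of Lemma \ref{lem1115_3} are met, so that
\[
\psi(t) = I^\alpha \partial_t^\alpha \psi(t)
\]
for every $t \in [0,T]$. The right-hand side is well defined as an element of $L_q(0,T)$ because $\partial_t^\alpha \psi \in C([0,T]) \subset L_p(0,T)$ and the exponents $p,q,\alpha$ are precisely in the range covered by Lemma \ref{lem1115_1} (namely $p \in (1,\infty)$, $\alpha \in (0,1/p)$, and $\alpha - 1/p = -1/q$ with $q > p$).

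Applying Lemma \ref{lem1115_1} with $\varphi := \partial_t^\alpha \psi$ then gives
\[
\|\psi\|_{L_q(0,T)} = \|I^\alpha \partial_t^\alpha \psi\|_{L_q(0,T)} \le N(\alpha,p) \|\partial_t^\alpha \psi\|_{L_p(0,T)},
\]
which is the asserted inequality.

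There is essentially no obstacle here; the only thing to check is that the hypotheses of Lemma \ref{lem1115_1} match, which they do by assumption (the Sobolev scaling $\alpha - 1/p = -1/q$ forces $q > p$ since $\alpha > 0$). The lemma is really just a rephrasing of the fact that $I^\alpha$ is a left inverse of $\partial_t^\alpha$ on functions vanishing at $0$, combined with the mapping property $I^\alpha : L_p \to L_q$.
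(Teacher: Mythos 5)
Your proof is correct and is exactly the paper's argument: first apply Lemma \ref{lem1115_3} to write $\psi = I^\alpha \partial_t^\alpha \psi$, then apply Lemma \ref{lem1115_1} to get the $L_p \to L_q$ bound. The only difference is that you spell out the hypothesis check, which the paper leaves implicit.
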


\begin{proof}
Using Lemmas \ref{lem1115_3} and \ref{lem1115_1}, we obtain that
$$
\|\psi\|_{L_q(0,T)} = \|I^\alpha \partial_t^\alpha \psi\|_{L_q(0,T)} \le N(\alpha,p)\|\partial_t^\alpha \psi\|_{L_p(0,T)}.
$$
\end{proof}

\begin{lemma}
							\label{lem1110_1}
Let $\psi \in C^1([0,T])$ such that $\psi(0) = 0$.
Then
$$
\|\psi\|_{L_q(0,T)} \le N(\alpha,p,q)T^{\alpha-1/p+1/q}\|\partial_t^\alpha \psi\|_{L_p(0,T)},
$$
where $p \in [1,\infty]$, $q \in [1,\infty]$, and
$$
\alpha - 1/p > - 1/q.
$$
\end{lemma}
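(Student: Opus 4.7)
The plan is to reduce this to a direct composition of two already-established lemmas, namely the fundamental theorem of fractional calculus (Lemma \ref{lem1115_3}) and the $L_p$--$L_q$ mapping bound for the fractional integral $I^\alpha$ with an explicit $T$-factor (Lemma \ref{lem1018_01}). In effect, Lemma \ref{lem1115_2} already handles the borderline case $\alpha - 1/p = -1/q$ via Hardy--Littlewood--Sobolev, and the statement we are after is the subcritical/supercritical version obtained by trading off integrability for a favorable power of $T$.

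First, since $\psi \in C^1([0,T])$ and $\psi(0) = 0$, Lemma \ref{lem1115_3} gives the pointwise identity
\[
\psi(t) = I^\alpha \partial_t^\alpha \psi(t), \qquad t \in [0,T].
\]
This is the crucial algebraic step: it converts the estimate of $\psi$ in $L_q$ into an estimate of $I^\alpha$ applied to $\partial_t^\alpha \psi$ in $L_q$, which is exactly the setup of Lemma \ref{lem1018_01}.

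Next, I would apply Lemma \ref{lem1018_01} with the function $\varphi := \partial_t^\alpha \psi \in L_p(0,T)$ (which makes sense since $\psi \in C^1$, so in particular $\partial_t^\alpha \psi$ is bounded on $[0,T]$ and hence in $L_p$). Under the stated hypothesis $\alpha - 1/p > -1/q$ with $p,q \in [1,\infty]$, Lemma \ref{lem1018_01} yields
\[
\|I^\alpha \varphi\|_{L_q(0,T)} \le N(\alpha,p,q) \, T^{\alpha - 1/p + 1/q} \|\varphi\|_{L_p(0,T)}.
\]
Substituting $\varphi = \partial_t^\alpha \psi$ and using the identity from the first step gives exactly the claimed inequality.

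There is really no obstacle here: the entire argument is a two-line composition, and the power of $T$ comes out automatically from Lemma \ref{lem1018_01}. The only thing worth double-checking is that the hypotheses of Lemma \ref{lem1018_01} (all three cases $p=1$, $1<p<\infty$ with $q<\infty$, and $q=\infty$) cover the full range of $(p,q,\alpha)$ allowed in the present statement, but this was already arranged in the proof of that lemma, so no new work is needed.
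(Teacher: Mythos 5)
Your proof is correct and follows exactly the paper's own argument: apply Lemma \ref{lem1115_3} to write $\psi = I^\alpha \partial_t^\alpha \psi$, then invoke Lemma \ref{lem1018_01} to get the $L_p \to L_q$ bound with the explicit $T$-power. Nothing is missing.
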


\begin{proof}
We have
$$
\|\psi\|_{L_q(0,T)} = \|I^\alpha \partial_t^\alpha \psi\|_{L_q(0,T)} \le N(\alpha,p,q)T^{\alpha-1/p+1/q} \|\partial_t^\alpha \psi\|_{L_p(0,T)},
$$
where the second inequality is due to Lemma \ref{lem1018_01}.
\end{proof}

\begin{lemma}[Multiplicative inequality]
							\label{lem1115_4}
Let $p, q, r \in (1,\infty)$ and $\alpha \in (0,1/p)$.
Let $\psi \in C^1([0,T])$ such that $\psi(0) = 0$.
Then
\begin{equation*}
\|\psi\|_{L_q(0,T)} \le N(\alpha,p,\theta) \|\partial_t^\alpha \psi\|_{L_p(0,T)}^{\theta} \|\psi\|_{L_r(0,T)}^{1-\theta}
\end{equation*}
for all $\theta \in [0,1]$ satisfying
\begin{equation}
							\label{eq1115_02}
1/q = \left( 1/p - \alpha \right) \theta + (1-\theta)/r.
\end{equation}
\end{lemma}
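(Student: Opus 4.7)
My plan is to combine the Hardy--Littlewood--Sobolev type bound already established in Lemma \ref{lem1115_2} with the standard log-convexity of $L_p$ norms. The point is that the hypothesis $\alpha < 1/p$ places us exactly in the range where the ``Sobolev'' exponent $q_0$ defined by $1/q_0 = 1/p - \alpha$ lies in $(1,\infty)$, and every admissible $q$ in \eqref{eq1115_02} lies between $q_0$ and $r$.

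First, set $q_0 \in (1,\infty)$ by $1/q_0 = 1/p - \alpha$. Since $\psi \in C^1([0,T])$ with $\psi(0) = 0$, Lemma \ref{lem1115_3} gives $\psi = I^\alpha \partial_t^\alpha \psi$, and hence Lemma \ref{lem1115_2} yields
$$
\|\psi\|_{L_{q_0}(0,T)} \le N(\alpha,p) \|\partial_t^\alpha \psi\|_{L_p(0,T)}.
$$
This is the ``endpoint'' $\theta = 1$ of the desired inequality.

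Next, I observe that the relation \eqref{eq1115_02} can be rewritten as $1/q = \theta/q_0 + (1-\theta)/r$, so that for $\theta \in [0,1]$ the exponent $q$ lies between $\min(q_0,r)$ and $\max(q_0,r)$. The standard log-convexity of $L_p$ norms (a direct application of H\"older's inequality with exponents $q_0/(q\theta)$ and $r/(q(1-\theta))$) then gives
$$
\|\psi\|_{L_q(0,T)} \le \|\psi\|_{L_{q_0}(0,T)}^{\theta} \|\psi\|_{L_r(0,T)}^{1-\theta}.
$$
Substituting the bound from the previous paragraph into the first factor produces the claimed inequality, with the constant $N$ depending on $\alpha$, $p$, and $\theta$ only.

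I expect no real obstacle: the two boundary cases $\theta = 0$ and $\theta = 1$ reduce respectively to the trivial identity $\|\psi\|_{L_r} = \|\psi\|_{L_r}$ and to Lemma \ref{lem1115_2}, and the only mild point to verify is that the scaling of the Sobolev exponent cancels the $T$-factor (which is indeed the case since $\alpha - 1/p + 1/q_0 = 0$), consistent with the $T$-independent constant stated in the lemma.
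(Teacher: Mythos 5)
Your proposal is correct and follows essentially the same approach as the paper: both reduce to the endpoint estimate $\|\psi\|_{L_{q_0}} \le N\|\partial_t^\alpha\psi\|_{L_p}$ with $1/q_0 = 1/p - \alpha$ (via Lemma \ref{lem1115_3} and Lemma \ref{lem1115_2}) and then interpolate between $L_{q_0}$ and $L_r$ by H\"older. The paper writes the H\"older split first and only afterwards verifies that the intermediate exponent $\theta q A'$ equals $q_0$, whereas you observe up front that \eqref{eq1115_02} is just $1/q = \theta/q_0 + (1-\theta)/r$ and invoke log-convexity directly; this is the same computation in a slightly more transparent packaging.
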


\begin{proof}
By Lemma \ref{lem1115_2}, we can clearly assume that $\theta \in (0,1)$.
Under the conditions $\alpha < 1/p$ and \eqref{eq1115_02}, we see that
$$
\frac{(1-\theta)q}{r} < 1.
$$
Note that by H\"older's inequality,
$$
\|\psi\|_{L_q(0,T)} \leq \left(\int_0^T |\psi|^r \, dt \right)^{\frac{(1-\theta)}{r}}\left(\int_0^T |\psi|^{\theta q A'} \, dt \right)^{\frac{1}{qA'}},
$$
where $A'$ satisfies
$$
\frac{(1-\theta)q}{r} + \frac{1}{A'}=1.
$$
Hence, by Lemma \ref{lem1115_2} and the fact that
$$
\alpha < 1/p, \quad \theta q A' > 1, \quad  \alpha - \frac{1}{p} = - \frac{1}{\theta q A'},
$$
it follows
$$
\|\psi\|_{L_q(0,T)}
\leq \|\psi\|_{L_r(0,T)}^{(1-\theta)}
\| \psi\|_{L_{\theta q A'}(0,T)}^{\theta}
\leq N(\alpha, p)^{\theta} \|\psi\|_{L_r(0,T)}^{(1-\theta)} \|\partial_t^\alpha \psi\|_{L_p(0,T)}^{\theta}.
$$
The lemma is proved.
\end{proof}

\begin{theorem}[Embedding with $\alpha$-time derivative and $2$-spatial derivatives with $p < \min\{1/\alpha,d/2\}$]
							\label{thm1204_1}
Let $\alpha \in (0, 1)$ and $p, q \in (1,\infty)$ satisfy
$$
p < \min\{1/\alpha, d/2\}, \quad p < q < q^* := \frac{1/\alpha + d/2}{1/(\alpha p) + d/(2p) - 1}.
$$
Then
\begin{equation}
							\label{eq0213_01}
\|\psi\|_{L_q(\bR^d_T)} \le N  \|D_x^2 \psi\|_{L_p(\bR^d_T)}^{\theta} \|\partial_t^\alpha \psi \|_{L_p(\bR^d_T)}^{\tau(1-\theta)} \|\psi\|_{L_p(\bR^d_T)}^{(1-\tau)(1-\theta)}
\end{equation}
for $\psi \in \bH_{p,0}^{\alpha,2}(\bR^d_T)$, where
$$
\theta = \frac{d}{2}\left(\frac{1}{p} - \frac{1}{q}\right) \in (0,1), \quad \tau = \frac{2}{\alpha d} \frac{\theta}{1-\theta} \in (0,1),
$$
and
$N = N(d,\alpha,p,q)$, but independent of $T$.
If $q = q^*$, then
\begin{equation}
							\label{eq0213_02}
\|\psi\|_{L_q(\bR^d_T)} \le N  \|D_x^2 \psi\|_{L_p(\bR^d_T)}^{\alpha d/(2+\alpha d)} \|\partial_t^\alpha \psi \|_{L_p(\bR^d_T)}^{2/(2+\alpha d)}.
\end{equation}
\end{theorem}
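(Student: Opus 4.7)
The strategy is a two-direction interpolation: a spatial Gagliardo--Nirenberg estimate on time slices combined with the time-direction multiplicative inequality of Lemma \ref{lem1115_4}, glued together by H\"older's and Minkowski's integral inequalities. By the density statement built into the definition of $\bH_{p,0}^{\alpha,2}(\bR^d_T)$, I may assume $\psi \in C^\infty([0,T]\times\bR^d)$ vanishes for large $|x|$ with $\psi(0,x)=0$; then $\psi(\cdot,x)\in C^1([0,T])$ with $\psi(0,x)=0$ for every $x$, so Lemma \ref{lem1115_4} applies slice by slice, and the inequality for $\psi\in\bH_{p,0}^{\alpha,2}$ follows by taking limits and Fatou's lemma.

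The first step is classical spatial Gagliardo--Nirenberg applied on each time slice:
$$
\|\psi(t,\cdot)\|_{L_q(\bR^d)} \le N\,\|D_x^2\psi(t,\cdot)\|_{L_p(\bR^d)}^{\theta}\,\|\psi(t,\cdot)\|_{L_p(\bR^d)}^{1-\theta},
$$
with $\theta=(d/2)(1/p-1/q)$, which is valid because $q^*<pd/(d-2p)$ under the hypotheses. Raising to the $q$-th power, integrating in $t$, and applying H\"older with conjugate exponents $\lambda'=p/(\theta q)$, $\lambda=p/(p-\theta q)$ (admissible since $q<q^*$ implies $\theta q<p$), I obtain
$$
\|\psi\|_{L_q(\bR^d_T)}^q \le N\,\|D_x^2\psi\|_{L_p(\bR^d_T)}^{\theta q}\,\Big(\int_0^T \|\psi(t,\cdot)\|_{L_p(\bR^d)}^{q_1}\,dt\Big)^{(1-\theta)q/q_1},
$$
where $q_1:=(1-\theta)qp/(p-\theta q)>p$. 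Since $q_1\ge p$, Minkowski's integral inequality yields
$$
\Big(\int_0^T \|\psi(t,\cdot)\|_{L_p(\bR^d)}^{q_1}\,dt\Big)^{1/q_1} \le \|\psi\|_{L_p(\bR^d;L_{q_1}(0,T))}.
$$

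For the time direction, I apply Lemma \ref{lem1115_4} pointwise in $x$, choosing the interpolation parameter $\tau$ so that the target exponent is $q_1$. Matching $1/q_1 = 1/p - \tau\alpha$ gives $\tau = (1/p-1/q_1)/\alpha$, and a short algebraic simplification shows $1/p-1/q_1 = 2\theta/[d(1-\theta)]$, so $\tau = 2\theta/[\alpha d(1-\theta)]$, which is exactly the $\tau$ in the statement. H\"older in $x$ then gives $\|\psi\|_{L_p(\bR^d;L_{q_1}(0,T))} \le N\,\|\partial_t^\alpha\psi\|_{L_p(\bR^d_T)}^{\tau}\,\|\psi\|_{L_p(\bR^d_T)}^{1-\tau}$, and substituting back produces \eqref{eq0213_01}. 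The endpoint $q=q^*$ corresponds precisely to $\tau=1$; in that case I use the Hardy--Littlewood--Sobolev type bound of Lemma \ref{lem1115_2} instead, and the $\|\psi\|_{L_p}$ factor drops out, yielding \eqref{eq0213_02}.

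The main technical point to verify is the equivalence $\tau\in(0,1)\iff p<q<q^*$ (and $\tau=1\iff q=q^*$): a direct check gives $\tau<1\iff \theta(2+\alpha d)<\alpha d\iff 1/q>1/p-2\alpha/(2+\alpha d)=1/q^*$. The auxiliary inequalities $q_1\ge p$ and $\theta q<p$ both also reduce to $q<q^*$ under the standing assumptions $p<\min\{1/\alpha,d/2\}$. Once these index checks are done, the argument is a sequence of applications of well-known inequalities, and the apparent complexity of the exponents $\theta$ and $\tau$ is dictated entirely by the scaling $\psi(t,x)\mapsto \psi(\mu^{2/\alpha}t,\mu x)$, which provides a consistency check on the plan.
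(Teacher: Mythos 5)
Your proof is correct, and the exponent bookkeeping (in particular $\theta q < p$, $q_1\ge p$, and the simplification $\tau=2\theta/[\alpha d(1-\theta)]$) all checks out. It rests on the same lemmas as the paper's proof --- the spatial Sobolev/Gagliardo--Nirenberg embedding, the temporal multiplicative inequality (Lemmas \ref{lem1115_4} and \ref{lem1115_2}), Minkowski's integral inequality, and H\"older --- but the decomposition is organized differently. The paper works purely at the endpoints: it records the two mixed-norm bounds $\|\psi\|_{L_p(t;L_{pd/(d-2p)}(x))}\le N\|D_x^2\psi\|_{L_p}$ and, via Lemma \ref{lem1115_4} with $\theta=1$ plus Minkowski, $\|\psi\|_{L_{p/(1-\alpha p)}(t;L_p(x))}\le N\|\partial_t^\alpha\psi\|_{L_p}$, then interpolates these by a single mixed-norm H\"older step to get the scaling-critical estimate \eqref{eq0213_02} directly, from which \eqref{eq0213_01} follows by one more H\"older. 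You instead apply the slice-wise Gagliardo--Nirenberg inequality with the interior parameter $\theta$, carry the intermediate temporal exponent $q_1=(1-\theta)qp/(p-\theta q)$ explicitly, and then invoke Lemma \ref{lem1115_4} with $\tau<1$ plus H\"older in $x$. Your route makes the appearance of $\tau$ and its relation to $q_1$ transparent; the paper's route is slightly leaner because it isolates the endpoint \eqref{eq0213_02} as the basic fact and treats \eqref{eq0213_01} as a corollary, avoiding the auxiliary index $q_1$ altogether. Both are sound.
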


\begin{proof}
By the definition of $\bH_{p,0}^{\alpha,2}(\bR^d_T)$, we may assume that $\psi \in C_0^\infty\left([0,T] \times \bR^d \right)$ and $\psi(0,x) = 0$.
By the Sobolev embedding in $x$, we have
\begin{equation}
                                    \label{eq11.55}
\|\psi\|_{L_p((0,T);L_{pd/(d-2p)}(\bR^d))} \le N\|D_x^2 \psi\|_{L_p(\bR^d_T)}.
\end{equation}
Similarly, by Lemma \ref{lem1115_4} with $\theta = 1$, we have
$$
\|\psi\|_{L_p(\bR^d;L_{p/(1-\alpha p)}((0,T)))} \le N\|\partial_t^\alpha \psi\|_{L_p(\bR^d_T)},
$$
which together with the Minkowski inequality implies that
\begin{equation}
                               \label{eq11.56}
\|\psi\|_{L_{p/(1-\alpha p)}((0,T);L_p(\bR^d))}
= \left\| \int_{\bR^d} |\psi(\cdot,x)|^p \, dx \right\|_{L_{\frac{1}{1-\alpha p}}(0,T)}^{\frac{1}{p}}
\le N\|\partial_t^\alpha \psi\|_{L_p(\bR^d_T)}.
\end{equation}
By \eqref{eq11.55}, \eqref{eq11.56}, and H\"older's inequality, we immediately get \eqref{eq0213_02}.
Finally, \eqref{eq0213_01} follows from  \eqref{eq0213_02} and H\"older's inequality.
\end{proof}

From Theorem \ref{thm1204_1} the following corollary follows easily.

\begin{corollary}
							\label{cor1211_1}
Let $\alpha \in (0, 1)$ and $p, q \in (1,\infty)$ satisfy
$$
p < \min\{1/\alpha,d/2\}, \quad p<q \leq q^* := \frac{1/\alpha + d/2}{1/(\alpha p) + d/(2p) - 1}.
$$
Then we have
$$
\|\psi\|_{L_q\left((0,T) \times B_1\right)} \le N \|\psi\|_{\bH_p^{\alpha,2}\left((0,T) \times B_1\right)}
$$
for any $\psi \in \bH_{p,0}^{\alpha,2}\left((0,T) \times B_1\right)$, where
$N = N(d,\alpha,p,q)$, but independent of $T$.
If $p \leq d/2$ and $p \leq 1/\alpha$, then the same estimate holds for $q\in [1,q^*)$ with $N$ depending also on $T$.
\end{corollary}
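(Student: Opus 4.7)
The plan is to reduce the corollary to Theorem~\ref{thm1204_1} by a purely spatial extension argument. Since $B_1$ is a smooth bounded domain, there is a bounded linear operator $E\colon W^{k,p}(B_1)\to W^{k,p}(\bR^d)$ for $k\in\{0,1,2\}$, for instance the Stein universal extension, with $Eu$ supported in $B_2$. Applying $E$ pointwise in $t$ defines $(E\psi_n)(t,x):=E(\psi_n(t,\cdot))(x)$. Because $E$ acts only on the spatial variables, it commutes with every temporal operation; in particular, if $\psi_n\in C^\infty([0,T]\times B_1)$ with $\psi_n(0,\cdot)=0$, then $E\psi_n$ is smooth on $[0,T]\times\bR^d$, vanishes for $|x|>2$, satisfies $(E\psi_n)(0,\cdot)=0$, and $D_t^\alpha(E\psi_n)=E(D_t^\alpha\psi_n)$. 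Approximating $\psi$ by such $\psi_n$ in $\bH_{p,0}^{\alpha,2}((0,T)\times B_1)$ and using the boundedness of $E$ on $W^{2,p}$ and $L^p$ yields
\[
E\psi\in\bH_{p,0}^{\alpha,2}(\bR^d_T),\qquad \|E\psi\|_{\bH_p^{\alpha,2}(\bR^d_T)}\le N(d)\,\|\psi\|_{\bH_p^{\alpha,2}((0,T)\times B_1)},
\]
with $N$ independent of $T$.

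Next I would apply Theorem~\ref{thm1204_1} to $E\psi$. For $q=q^*$, the bound \eqref{eq0213_02} together with Young's inequality (the exponents $\alpha d/(2+\alpha d)$ and $2/(2+\alpha d)$ sum to one) gives $\|E\psi\|_{L_{q^*}(\bR^d_T)}\le N\|E\psi\|_{\bH_p^{\alpha,2}(\bR^d_T)}$. For $p<q<q^*$, the same conclusion follows from \eqref{eq0213_01} because $\theta+\tau(1-\theta)+(1-\tau)(1-\theta)=1$. Since $\psi=E\psi$ on $B_1$, restricting back gives the first assertion: $\|\psi\|_{L_q((0,T)\times B_1)}\le\|E\psi\|_{L_q(\bR^d_T)}\le N\|\psi\|_{\bH_p^{\alpha,2}((0,T)\times B_1)}$.

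For the last sentence, where $p\le d/2$ and $p\le 1/\alpha$ are allowed (possibly with equality) and $q\in[1,q^*)$, I would first cover $q\in[1,p]$ by H\"older's inequality on the bounded cylinder $(0,T)\times B_1$, paying a factor $(T|B_1|)^{1/q-1/p}$. For $q\in(p,q^*)$ in the borderline case $p=d/2$ or $p=1/\alpha$, I would choose an auxiliary exponent $p'<p$ with $p'<\min\{1/\alpha,d/2\}$ and $q^*(p')>q$ (possible since $q^*(p')\to q^*(p)$ as $p'\to p^{-}$ and $q<q^*(p)$), apply the already-proved part of the corollary with exponent $p'$, and then use H\"older on the bounded cylinder once more to dominate $\|\psi\|_{\bH_{p'}^{\alpha,2}((0,T)\times B_1)}$ by $\|\psi\|_{\bH_p^{\alpha,2}((0,T)\times B_1)}$, up to a $T$-dependent constant.

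The main obstacle I anticipate is checking that the spatial extension interacts cleanly with $\bH_{p,0}^{\alpha,2}$: namely, that $E\psi$ lies in $\bH_{p,0}^{\alpha,2}(\bR^d_T)$ (not merely in $\widetilde{\bH}_p^{\alpha,2}$), and that $\partial_t^\alpha$ of the extension coincides with the extension of $\partial_t^\alpha\psi$. Both facts reduce, via the approximating sequence, to the observation that $E$ is linear and bounded on $L^p$ and hence commutes with the integral operator $I^{1-\alpha}$ and with the distributional time derivative; the preservation of the zero initial trace follows from $E\psi_n(0,\cdot)=0$ and passage to the limit.
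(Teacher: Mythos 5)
Your proposal is correct and follows essentially the same route as the paper: extend $\psi$ spatially to a function in $\bH_{p,0}^{\alpha,2}(\bR^d_T)$, apply Theorem~\ref{thm1204_1} together with Young's inequality, and handle the borderline cases $p=d/2$ or $p=1/\alpha$ by lowering the exponent to some $p'<p$ and using H\"older on the bounded cylinder. You supply more detail than the paper (in particular, verifying that the extension commutes with $I^{1-\alpha}$ and preserves membership in $\bH_{p,0}^{\alpha,2}$, and explicitly covering $q\in[1,p]$), but the underlying argument is the same.
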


\begin{proof}
If $p < d/2$ and $p < 1/\alpha$, the result follows easily from Theorem \ref{thm1204_1} with an extension of $\psi$ to a function in $\bH_{p,0}^{\alpha,2}(\bR^d_T)$.
If $p = d/2$ or $p = 1/\alpha$, then find $\varepsilon > 0$ such that
$$
q < \frac{1/\alpha + d/2}{1/(\alpha (p-\varepsilon)) + d/(2(p-\varepsilon)) - 1} < \frac{1/\alpha + d/2}{1/(\alpha p) + d/(2p) - 1}.
$$
Then
$$
\|\psi\|_{L_q\left((0,T) \times B_1\right)} \leq N \|\psi\|_{\bH_{p-\varepsilon}^{\alpha,2}\left((0,T) \times B_1\right)} \leq N \|\psi\|_{\bH_p^{\alpha,2}\left((0,T) \times B_1\right)}.
$$
The corollary is proved.
\end{proof}

\begin{theorem}[Embedding with $\alpha$-time derivative and $2$-spatial derivatives with $d/2 < p < 1/\alpha$]
							\label{thm1207_2}
Let $\alpha \in (0, 1)$ and $p, q \in (1,\infty)$ satisfy
$$
\frac{d}{2} < p < \frac{1}{\alpha}, \quad p < q \leq p(\alpha p + 1).
$$
Then, for $\psi \in \bH_{p,0}^{\alpha, 2}\left((0,T) \times B_1\right)$, we have
\begin{equation}
							\label{eq0213_04}
\|\psi\|_{L_q((0,T)\times B_1)} \le N \left( \sum_{0 \leq |\beta| \leq 2} \|D_x^\beta \psi\|_{L_p((0,T)\times B_1)} \right)^{1-\theta} \|\partial_t^\alpha\psi\|_{L_p((0,T)\times B_1)}^\theta,
\end{equation}
where $N = N(d,\alpha,p,q)$, but independent of $T$, and
$$
\theta = \frac{1}{\alpha} \left( \frac{1}{p} - \frac{1}{q} \right) \in (0,1).
$$
If $d/2 < p \leq 1/\alpha$, then the same estimate holds for $q$ satisfying
$$
1 \leq q < p(\alpha p + 1)
$$
with $N$ depending also on $T$.
\end{theorem}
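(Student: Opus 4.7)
The idea is to trade off the two directions of regularity separately and then glue them back together by a single Hölder step. The assumption $p>d/2$ makes available the classical spatial Sobolev embedding $W^{2,p}(B_1)\hookrightarrow L_\infty(B_1)$, and the assumption $p<1/\alpha$ (i.e.\ $\alpha p<1$) is exactly what places the time direction in the regime of Lemma~\ref{lem1115_2}, giving a time embedding with a $T$-independent constant. By the definition of $\bH_{p,0}^{\alpha,2}$, I would first reduce to $\psi\in C^\infty([0,T]\times\overline{B_1})$ with $\psi(0,x)\equiv 0$, for which all quantities are finite; the inequality then passes to the limit.

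\textbf{The two one-dimensional embeddings.} Write $M:=\sum_{0\le |\beta|\le 2}\|D^\beta_x\psi\|_{L_p((0,T)\times B_1)}$. Slicewise application of $W^{2,p}(B_1)\hookrightarrow L_\infty(B_1)$ and integration in $t$ yields
\begin{equation*}
\|\psi\|_{L_p((0,T);L_\infty(B_1))}\le N(d,p)\,M.
\end{equation*}
In the time direction, let $r:=p/(1-\alpha p)>p$. Applying Lemma~\ref{lem1115_2} pointwise in $x$, raising to the $p$-th power, integrating over $B_1$, and invoking Minkowski's inequality (legitimate since $r\ge p$) gives
\begin{equation*}
\|\psi\|_{L_r((0,T);L_p(B_1))}\le N(\alpha,p)\,\|\partial_t^\alpha\psi\|_{L_p((0,T)\times B_1)}.
\end{equation*}

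\textbf{Endpoint $q=q_0:=p(\alpha p+1)$.} Split $|\psi|^{q_0}=|\psi|^p\cdot|\psi|^{q_0-p}$, estimate the second factor pointwise in $x$ by $\|\psi(t,\cdot)\|_{L_\infty(B_1)}^{q_0-p}$, and apply Hölder in $t$ with conjugate exponents $a=r/p$ and $b=r/(r-p)$. The algebraic identity $r/(r-p)=1/(\alpha p)$, which forces $(q_0-p)b=p$, yields
\begin{equation*}
\|\psi\|_{L_{q_0}((0,T)\times B_1)}^{q_0}\le \|\psi\|_{L_r((0,T);L_p(B_1))}^{p}\,\|\psi\|_{L_p((0,T);L_\infty(B_1))}^{\alpha p^2}.
\end{equation*}
Taking $q_0$-th roots produces exponents $p/q_0=1/(\alpha p+1)$ and $\alpha p^2/q_0=\alpha p/(\alpha p+1)$, which match $\theta$ and $1-\theta$ of the statement for $q=q_0$. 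Combining with the two one-dimensional embeddings proves \eqref{eq0213_04} at $q=q_0$. For $p<q<q_0$, a second Hölder interpolation $\|\psi\|_{L_q}\le \|\psi\|_{L_p}^\lambda\|\psi\|_{L_{q_0}}^{1-\lambda}$ with $\lambda$ determined by $1/q=\lambda/p+(1-\lambda)/q_0$, followed by the bound $\|\psi\|_{L_p}\le M$, collapses after a short computation to the claimed exponents $1-\theta$ on $M$ and $\theta=\alpha^{-1}(1/p-1/q)$ on $\|\partial_t^\alpha\psi\|_{L_p}$.

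\textbf{Borderline case $d/2<p\le 1/\alpha$, $q<p(\alpha p+1)$.} When $\alpha p=1$ (or more generally when one wants to approach $q_0$ without reaching it), Lemma~\ref{lem1115_2} breaks down but Lemma~\ref{lem1110_1} still provides $\|\psi(\cdot,x)\|_{L_s(0,T)}\le N\,T^{\alpha-1/p+1/s}\|\partial_t^\alpha\psi(\cdot,x)\|_{L_p(0,T)}$ for any finite $s$ satisfying $\alpha-1/p>-1/s$. Given $q<p(\alpha p+1)$, I would choose $s$ slightly below $p/(1-\alpha p)$ (or any large enough finite $s$ when $\alpha p=1$) so that the same Hölder scheme as in Step~4, with $s$ in place of $r$, exactly hits the prescribed $q$; the constant now depends on $T$ through the factor from Lemma~\ref{lem1110_1}, which is precisely the loss asserted in the second half of the theorem. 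The main obstacle throughout is purely algebraic bookkeeping: once the identity $r/(r-p)=1/(\alpha p)$ is observed, everything else is mechanical; the delicate point is to confirm that the composite Hölder exponents collapse to the uniform expression $\theta=\alpha^{-1}(1/p-1/q)$ both at the endpoint and after the second interpolation.
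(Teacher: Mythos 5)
Your proposal is correct and follows essentially the same route as the paper: the spatial Sobolev embedding $W^{2,p}(B_1)\hookrightarrow L_\infty(B_1)$ for $p>d/2$, the temporal embedding $L_{p/(1-\alpha p)}((0,T);L_p)$ via Lemma~\ref{lem1115_2} (the paper invokes it through Lemma~\ref{lem1115_4} with $\theta=1$) plus Minkowski, a Hölder step at the endpoint $q_0=p(\alpha p+1)$, and a second Hölder interpolation for $p<q<q_0$. You have merely made explicit the Hölder bookkeeping that the paper compresses into ``By \eqref{eq12.01}, \eqref{eq11.56b}, and Hölder's inequality'' and the handling of the borderline $p=1/\alpha$ case via Lemma~\ref{lem1110_1}, both of which check out.
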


\begin{proof}
As above, we assume that $\psi \in  \bH_{p,0}^{\alpha,2}\left((0,T) \times B_1\right) \cap C^\infty\big([0,T] \times B_1\big)$ and $\psi(0,x) = 0$.
Since $p>d/2$, by the Sobolev embedding in $x$, we have
\begin{equation}
                                    \label{eq12.01}
\|\psi\|_{L_p((0,T);L_{\infty}(B_1))} \le N\left( \sum_{0 \leq |\beta| \leq 2} \|D_x^\beta \psi\|_{L_p((0,T)\times B_1)} \right).
\end{equation}
Similarly, by Lemma \ref{lem1115_4} with $\theta = 1$ and the Minkowski inequality, we have
\begin{multline}
                               \label{eq11.56b}
\|\psi\|_{L_{p/(1-\alpha p)}((0,T);L_p(B_1))} = \left\| \int_{B_1} |\psi(\cdot,x)|^p \, dx \right\|_{L_{1/(1-\alpha p)}((0,T))}^{1/p}
\\
\leq \|\psi\|_{L_p(B_1; L_{p/(1-\alpha p)}((0,T)))} \le N\|\partial_t^\alpha \psi\|_{L_p((0,T)\times B_1)}.
\end{multline}
By \eqref{eq12.01}, \eqref{eq11.56b}, and H\"older's inequality, we immediately get \eqref{eq0213_04} with $q=p(\alpha p+1)$ and $\theta=1/(\alpha p+1)$. The general case then follows from  H\"older's inequality.
\end{proof}

\begin{theorem}[Embedding with $\alpha$-time derivative and $2$-spatial derivatives with $1/\alpha < p <  d/2$]
Let $\alpha \in (0,1)$ and $p, q \in (1,\infty)$ such that
\begin{equation*}
\frac{1}{\alpha} < p < \frac{d}{2}, \quad p < q \leq p + \frac{2p^2}{d}.
\end{equation*}
Then, for $\psi \in \bH_{p,0}^{\alpha,2}(\bR^d_T)$,
$$
\|\psi\|_{L_q(\bR^d_T)} \le N T^{\alpha\left(1-\frac{p}{q}\right)-\frac{1}{p}+\frac{1}{q}} \|\partial_t^\alpha \psi\|_{L_p(\bR^d_T)}^{1-p/q} \|D_x^2 \psi\|_{L_p(\bR^d_T)}^{\theta p/q} \|\psi\|_{L_p(\bR^d_T)}^{(1-\theta)p/q},
$$
where $N=N(d,\alpha,p,q)$ and
$
\theta = d(q-p)/(2p^2) \in (0,1]$.
\end{theorem}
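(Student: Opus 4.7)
The plan is to adapt the pattern used in Theorems \ref{thm1204_1} and \ref{thm1207_2}, combining an $L_\infty$--in--$t$ bound (available because $\alpha > 1/p$) with a Gagliardo--Nirenberg type bound in $x$ (available because $p < d/2$), and then gluing them with H\"older's inequality in $x$ and Minkowski's integral inequality. As always, by density it suffices to treat $\psi \in C_0^\infty([0,T]\times\bR^d)$ with $\psi(0,x)=0$.

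First I would invoke Lemma \ref{lem1115_3} and Lemma \ref{lem1018_01} (with $q=\infty$, $p>1/\alpha$), applied slicewise in $x$, to obtain
$$
\|\psi(\cdot,x)\|_{L_\infty((0,T))} \le N\, T^{\alpha-1/p}\,\|\partial_t^\alpha \psi(\cdot,x)\|_{L_p((0,T))}.
$$
Next, I would peel off $|\psi|^{q-p}$ using this slicewise $L_\infty$ bound and integrate in $t$ first:
$$
\|\psi\|_{L_q(\bR^d_T)}^q \le \int_{\bR^d} \|\psi(\cdot,x)\|_{L_\infty((0,T))}^{q-p}\,\|\psi(\cdot,x)\|_{L_p((0,T))}^{p}\,dx.
$$

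Set $r := p^2/(2p-q)$. The hypothesis $q \le p+2p^2/d$ combined with $p<d/2$ forces $q<2p$, so $r\in (p,\infty)$ is well defined, and the choice of the exponents $a=p/(q-p)$, $b=p/(2p-q)$ makes the two factors on the right natural targets for H\"older in $x$:
$$
\|\psi\|_{L_q(\bR^d_T)}^q \le \|\psi\|_{L_p(\bR^d; L_\infty((0,T)))}^{q-p}\cdot \|\psi\|_{L_r(\bR^d; L_p((0,T)))}^{p}.
$$
Since $r\ge p$, Minkowski's integral inequality allows me to swap orders and estimate the second factor by $\|\psi\|_{L_p((0,T); L_r(\bR^d))}^p$. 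Now I would apply the standard Gagliardo--Nirenberg inequality in $x$
$$
\|\psi(t,\cdot)\|_{L_r(\bR^d)} \le N\,\|D_x^2\psi(t,\cdot)\|_{L_p(\bR^d)}^{\theta}\,\|\psi(t,\cdot)\|_{L_p(\bR^d)}^{1-\theta},
$$
where $\theta=(d/2)(1/p-1/r)=d(q-p)/(2p^2)$; the upper bound $q\le p+2p^2/d$ is precisely what guarantees $\theta\le 1$ (equivalently $r\le pd/(d-2p)$). Raising to the $p$-th power and applying H\"older in $t$ with conjugate exponents $1/\theta$ and $1/(1-\theta)$ then yields
$$
\|\psi\|_{L_p((0,T); L_r(\bR^d))} \le N\,\|D_x^2\psi\|_{L_p(\bR^d_T)}^{\theta}\,\|\psi\|_{L_p(\bR^d_T)}^{1-\theta}.
$$

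Finally, taking $q$-th roots and substituting the slicewise time estimate for the first factor gives
$$
\|\psi\|_{L_q(\bR^d_T)} \le N\,T^{(\alpha-1/p)(q-p)/q}\,\|\partial_t^\alpha \psi\|_{L_p}^{(q-p)/q}\,\|D_x^2\psi\|_{L_p}^{p\theta/q}\,\|\psi\|_{L_p}^{p(1-\theta)/q},
$$
and the identity $(\alpha-1/p)(q-p)/q=\alpha(1-p/q)-1/p+1/q$ puts this in the exact form of the theorem. There is no real obstacle here; the only subtlety is keeping track of the exponents and checking that the hypothesis $1/\alpha<p<d/2$ together with $q\le p+2p^2/d$ makes all of $r$, $b$, and $\theta$ lie in the admissible ranges for the H\"older, Minkowski, and Gagliardo--Nirenberg steps.
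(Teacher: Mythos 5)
Your proof is correct, and all the arithmetic checks out: $p < d/2$ forces $q < 2p$ so that $r = p^2/(2p-q) \in (p,\infty)$, and $q \le p + 2p^2/d$ gives $\theta = d(q-p)/(2p^2) \in (0,1]$, which is exactly the admissible range for Gagliardo--Nirenberg in $x$. The ingredients are the same ones the paper uses — Lemma \ref{lem1110_1} for the $L_\infty$-in-time bound (valid since $\alpha > 1/p$), a Sobolev-type estimate in $x$ (valid since $p < d/2$), Minkowski, and H\"older — but the organization differs. You treat a general $q$ in one shot: peel off $|\psi|^{q-p}$ via the slicewise $L_\infty$ bound, apply H\"older in $x$ to separate into the mixed norms $L_p(\bR^d;L_\infty(0,T))$ and $L_r(\bR^d;L_p(0,T))$, swap the order in the second factor by Minkowski, then apply the full Gagliardo--Nirenberg inequality in $x$ and H\"older in $t$ with exponents $1/\theta$, $1/(1-\theta)$. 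The paper instead proves only the endpoint case $q^* = p + 2p^2/d$ (so $\theta=1$), using H\"older between the mixed norms $\|\psi\|_{L_\infty((0,T);L_p)}$ and $\|\psi\|_{L_p((0,T);L_{dp/(d-2p)})}$, and then recovers general $q \in (p, q^*]$ by a final H\"older interpolation between $L_p$ and $L_{q^*}$. Both routes are equally elementary and yield the stated constant; yours avoids the two-stage endpoint-then-interpolate structure at the cost of invoking the full Gagliardo--Nirenberg inequality rather than just the Sobolev endpoint.
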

\begin{proof}
As above, we assume that $\psi \in C_0^\infty\left([0,T] \times \bR^d \right)$ and $\psi(0,x) = 0$.
Since $\alpha>1/p$, by Lemma \ref{lem1110_1} and the Minkowski inequality, we have
\begin{equation}
                                    \label{eq12.14}
\|\psi\|_{L_{\infty}((0,T);L_p(\bR^d_T))}
\le \|\psi\|_{L_p(\bR^d;L_{\infty}((0,T)))}\le NT^{\alpha-1/p}\|\partial_t^\alpha \psi\|_{L_p(\bR^d_T)}.
\end{equation}
By the Sobolev embedding in $x$, we have
\begin{equation}
                                    \label{eq12.19}
\|\psi\|_{L_p((0,T);L_{dp/(d-2p)}(\bR^{d}))}\le N\|D_x^2 \psi\|_{L_p(\bR^d_T)}.
\end{equation}
By \eqref{eq12.14}, \eqref{eq12.19}, and H\"older's inequality, we get the desired estimate with $q=p+2p^2/d$. The general case then follows from H\"older's inequality.
\end{proof}

By extending $\psi \in \bH_{p,0}^{\alpha,2}\left((0,T) \times B_1 \right)$ to a function in $\bH_{p,0}^{\alpha,2}(\bR^d_T)$ and using the above theorem, we get

\begin{corollary}
[Embedding with $\alpha$-time derivative and $2$-spatial derivatives with $1/\alpha < p <  d/2$]
							\label{cor0225_1}
Let $\alpha \in (0,1)$ and $p, q \in (1,\infty)$ such that
\begin{equation*}
\frac{1}{\alpha} < p < \frac{d}{2}, \quad p < q \leq p + \frac{2p^2}{d}.
\end{equation*}
Then, for $\psi \in \bH_{p,0}^{\alpha,2}\left((0,T) \times B_1\right)$,
$$
\|\psi\|_{L_q\left((0,T) \times B_1\right)} \le N T^{\alpha\left(1-\frac{p}{q}\right)-\frac{1}{p}+\frac{1}{q}} \|\psi\|_{\bH_p^{\alpha,2}\left((0,T) \times B_1\right)},
$$
where $N=N(d,\alpha,p,q)$.
If $1/\alpha < p \leq d/2$, the same estimate holds for $q$ satisfying
$$
1 \leq q < p + 2p^2/d
$$
with $N$ depending also on $T$.
\end{corollary}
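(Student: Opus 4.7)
The plan is to reduce the corollary to the preceding theorem by extending $\psi$ from the cylinder $(0,T) \times B_1$ to the whole space $(0,T) \times \bR^d$ via a spatial Sobolev extension operator. Specifically, let $E : W^{2,p}(B_1) \to W^{2,p}(\bR^d)$ be a standard bounded linear extension operator (which also maps $L_p(B_1)$ to $L_p(\bR^d)$ boundedly). Since $E$ acts only in the spatial variable and commutes with $\partial_t^\alpha$, applying $E$ pointwise in $t$ to $\psi$ yields $\tilde\psi(t,x) := (E\psi(t,\cdot))(x)$ with
\[
\|\tilde\psi\|_{\bH_p^{\alpha,2}(\bR^d_T)} \le N(d,p)\,\|\psi\|_{\bH_p^{\alpha,2}((0,T)\times B_1)}.
\]
For the claim that $\tilde\psi \in \bH_{p,0}^{\alpha,2}(\bR^d_T)$, take an approximating sequence $\psi_n \in C^\infty([0,T] \times B_1)$ with $\psi_n(0,x)=0$ converging to $\psi$ in $\bH_p^{\alpha,2}((0,T) \times B_1)$; the extensions $E\psi_n$ can be chosen smooth with $(E\psi_n)(0,x)=0$ and vanishing for large $|x|$ after multiplication by a spatial cutoff, giving an approximating sequence for $\tilde\psi$ in $\bH_{p,0}^{\alpha,2}(\bR^d_T)$.

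Next, in the regime $1/\alpha < p < d/2$ and $p < q \le p + 2p^2/d$, I would apply the preceding theorem to $\tilde\psi$ to get
\[
\|\psi\|_{L_q((0,T)\times B_1)} \le \|\tilde\psi\|_{L_q(\bR^d_T)} \le N T^{\alpha(1-p/q)-1/p+1/q} \|\partial_t^\alpha \tilde\psi\|_{L_p}^{1-p/q} \|D^2 \tilde\psi\|_{L_p}^{\theta p/q} \|\tilde\psi\|_{L_p}^{(1-\theta)p/q}.
\]
Then Young's inequality (with exponents $(1-p/q)^{-1}$, $(\theta p/q)^{-1}$, $((1-\theta)p/q)^{-1}$, which sum to $1$) absorbs the product of the three factors into a single sum, which is controlled by $\|\tilde\psi\|_{\bH_p^{\alpha,2}(\bR^d_T)}$ and hence by $\|\psi\|_{\bH_p^{\alpha,2}((0,T) \times B_1)}$.

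For the endpoint/borderline case $1/\alpha < p \le d/2$ with $1 \le q < p + 2p^2/d$, I would first reduce to $q > p$ by H\"older's inequality on the finite-measure set $(0,T) \times B_1$ (this is where the extra $T$-dependence enters the constant). Then pick $p' \in (1/\alpha, p]$ close enough to $p$ so that $q \le p' + 2(p')^2/d$ and $p'<d/2$ strictly; apply the case already established to $p'$, and finish by H\"older in the spatial and temporal variables to control $\|\psi\|_{\bH_{p'}^{\alpha,2}}$ by $\|\psi\|_{\bH_p^{\alpha,2}}$ on the bounded cylinder.

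The main obstacle I anticipate is the extension step: one must verify that spatial extension genuinely preserves membership in the subspace $\bH_{p,0}^{\alpha,2}$ (i.e., the zero-initial-trace condition) and interacts correctly with the non-local operator $\partial_t^\alpha = \partial_t I_0^{1-\alpha}$. This is not automatic from the definition but follows from the approximation characterization of $\bH_{p,0}^{\alpha,2}$ given in Section \ref{Sec3} together with the fact that the extension operator is linear, bounded, and acts only in $x$, so that $I_0^{1-\alpha}$ and $D_x^\beta$ both commute with it. Once this is checked, the rest is a straightforward interpolation/Young argument.
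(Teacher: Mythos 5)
Your proposal is correct and takes essentially the same route as the paper: the paper's own ``proof'' of this corollary is the single sentence preceding its statement (``By extending $\psi \in \bH_{p,0}^{\alpha,2}\left((0,T) \times B_1 \right)$ to a function in $\bH_{p,0}^{\alpha,2}(\bR^d_T)$ and using the above theorem''), and your writeup just fills in the routine details of why the spatial extension operator preserves $\bH_{p,0}^{\alpha,2}$, together with the endpoint reduction via $p'<p$, which mirrors exactly what the paper does in its proof of the analogous Corollary~\ref{cor1211_1}.
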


\begin{theorem}[Embedding with $\alpha$-time derivative and $2$-spatial derivatives with $\max\{1/\alpha ,d/2\} < p \leq d/2 + 1/\alpha$]
							\label{thm0214_1}
Let $\alpha \in (0,1)$ and $p, q \in (1,\infty)$ such that
\begin{equation*}
\max\{1/\alpha ,d/2\} < p \leq d/2 + 1/\alpha, \quad p < q \le 2p.
\end{equation*}
Then, for $\psi \in \bH_{p,0}^{\alpha,2}\left((0,T) \times B_1\right)$,
\begin{align*}
&\|\psi\|_{L_q\left((0,T) \times B_1\right)} \\
&\le N T^{\frac{\alpha p}{q} - \frac{1}{p} + \frac{1}{q}} \left( \sum_{0 \leq |\beta|\leq 2}\|D_x^\beta \psi\|_{L_p\left((0,T) \times B_1\right)}\right)^{1-\theta} \|\partial_t^\alpha \psi\|_{L_p\left((0,T) \times B_1\right)}^\theta,
\end{align*}
where $N=N(d,\alpha,p,q)$ and
$
\theta = p/q \in (0,1)$.
\end{theorem}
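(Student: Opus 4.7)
The plan is to mimic the structure of the preceding embedding results (Theorem \ref{thm1207_2} and Corollary \ref{cor0225_1}), relying on a mixed-norm Hölder argument that splits the $L_q$-norm into a product of a time-maximum and a spatial-maximum. As in those proofs, I first reduce to $\psi\in C^\infty([0,T]\times B_1)$ with $\psi(0,\cdot)=0$.

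First I would establish two anisotropic mixed-norm estimates. Since $p>d/2$, the classical Sobolev embedding $W^{2,p}(B_1)\hookrightarrow L_\infty(B_1)$ applied for each fixed $t$ and then raised to the $p$-th power in $t$ gives
\[
\|\psi\|_{L_p((0,T);L_\infty(B_1))}\le N\sum_{0\le|\beta|\le 2}\|D_x^\beta\psi\|_{L_p((0,T)\times B_1)}.
\]
Since $p>1/\alpha$, applying Lemma \ref{lem1110_1} with $q=\infty$ in the time variable (pointwise in $x$) together with Minkowski's integral inequality yields
\[
\|\psi\|_{L_\infty((0,T);L_p(B_1))}\le \|\psi\|_{L_p(B_1;L_\infty(0,T))}\le N T^{\alpha-1/p}\|\partial_t^\alpha\psi\|_{L_p((0,T)\times B_1)}.
\]

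Next I would handle the endpoint case $q=2p$ by the simple observation
\[
\int_0^T\!\!\int_{B_1}|\psi|^{2p}\,dx\,dt\le \|\psi\|_{L_\infty((0,T);L_p(B_1))}^{p}\,\|\psi\|_{L_p((0,T);L_\infty(B_1))}^{p},
\]
which combined with the two inequalities above gives the asserted estimate at $q=2p$, $\theta=1/2$, with the correct power of $T$, namely $(\alpha p-1)/(2p)$.

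For the intermediate range $p<q<2p$, I would interpolate between $L_p$ and $L_{2p}$ by Hölder's inequality, writing $\|\psi\|_{L_q}\le \|\psi\|_{L_p}^{\lambda}\|\psi\|_{L_{2p}}^{1-\lambda}$ with $\lambda=2p/q-1$, and critically estimate the $L_p$ factor by Lemma \ref{lem1110_1} with $q=p$, which gives $\|\psi\|_{L_p}\le NT^\alpha\|\partial_t^\alpha\psi\|_{L_p}$ rather than keeping $\|\psi\|_{L_p}$ as a free term. The exponents then combine to yield exponent $\lambda+(1-\lambda)/2=p/q=\theta$ on $\|\partial_t^\alpha\psi\|_{L_p}$ and $(1-\lambda)/2=1-\theta$ on $\sum\|D_x^\beta\psi\|_{L_p}$, while the $T$-exponent becomes $\alpha\lambda+(1-\lambda)(\alpha p-1)/(2p)=\alpha p/q-1/p+1/q$, matching the statement. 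The main (minor) subtlety is this choice in the interpolation step: one must bound the $L_p$ factor through Lemma \ref{lem1110_1} in order to get the exponent of $\|\partial_t^\alpha\psi\|_{L_p}$ to land at $\theta=p/q$ rather than $1-\theta$, and to produce the correct $T$-power.
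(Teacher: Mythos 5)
Your proof is correct and follows essentially the same route as the paper: the spatial Sobolev embedding $\|\psi\|_{L_p(0,T;L_\infty(B_1))}\lesssim\sum_{|\beta|\le 2}\|D_x^\beta\psi\|_{L_p}$ (using $p>d/2$) and the time estimate from Lemma \ref{lem1110_1} via Minkowski (using $p>1/\alpha$), combined by Hölder. The only cosmetic difference is that the paper applies Lemma \ref{lem1110_1} once with the intermediate exponent $q'=p^2/(2p-q)$ and does a single mixed-norm Hölder step, whereas you first treat the endpoint $q=2p$ (equivalently $q'=\infty$) and then interpolate, absorbing $\|\psi\|_{L_p}$ into the $\partial_t^\alpha\psi$ term---a reorganization of the same computation.
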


\begin{proof}
Again we assume that $\psi \in \bH_{p,0}^{\alpha,2}\left((0,T)\times B_1\right) \cap C^\infty\left([0,T] \times B_1 \right)$ and $\psi(0,x) = 0$.
We set $q':=p^2/(2p-q)\in (p,\infty]$. Since $\alpha - 1/p > 0$, from Lemma \ref{lem1110_1} and the Minkowski inequality,
$$
\|\psi\|_{L_{q'}((0,T);L_p(B_1))}
\le \|\psi\|_{L_p(B_1;L_{q'}((0,T)))}
\le N T^{\alpha - 1/p + 1/q'} \|\partial_t^\alpha \psi\|_{L_p((0,T)\times B_1)},
$$
where $N = N(\alpha,p,q')$.
This, \eqref{eq12.01}, and H\"older's inequality yield the desired inequality.
\end{proof}

\begin{lemma}[Embedding with $\alpha > 1/p$ in time]
							\label{lem0217_2}
Let $p \in (1, \infty]$, $\alpha > 1/p$, and $\psi \in C^1([0,T])$ with $\psi(0) = 0$.
Then
$$
|\psi(t_2) - \psi(t_1)| \leq N(\alpha,p) (t_2 - t_1)^{\alpha-1/p} \|\partial_t^\alpha\psi\|_{L_p(0,T)}
$$
for $0 \leq t_1 < t_2 \leq T$.
\end{lemma}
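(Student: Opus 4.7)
The plan is to combine Lemma \ref{lem1115_3} with a direct H\"older estimate of the convolution kernel. Since $\psi(0)=0$, Lemma \ref{lem1115_3} gives the representation
$$
\psi(t)=I^\alpha g(t)=\frac{1}{\Gamma(\alpha)}\int_0^t(t-s)^{\alpha-1}g(s)\,ds,\qquad g:=\partial_t^\alpha\psi.
$$
For $0\le t_1<t_2\le T$, I would split the difference as
$$
\Gamma(\alpha)(\psi(t_2)-\psi(t_1))=\int_{t_1}^{t_2}(t_2-s)^{\alpha-1}g(s)\,ds+\int_0^{t_1}\bigl[(t_2-s)^{\alpha-1}-(t_1-s)^{\alpha-1}\bigr]g(s)\,ds.
$$
Setting $q=p/(p-1)$ (with $q=1$ when $p=\infty$), H\"older's inequality reduces everything to bounding the $L_q$-norms of the two kernels. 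The first is elementary: $\bigl(\int_{t_1}^{t_2}(t_2-s)^{(\alpha-1)q}\,ds\bigr)^{1/q}=((\alpha-1)q+1)^{-1/q}(t_2-t_1)^{\alpha-1/p}$, where the exponent $(\alpha-1)q+1=\alpha q -1/p \cdot q>0$ exactly because $\alpha>1/p$.

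For the second kernel, observe that $\alpha-1<0$ makes the bracket $(t_1-s)^{\alpha-1}-(t_2-s)^{\alpha-1}\ge 0$. I would invoke the elementary convexity inequality $(a-b)^q\le a^q-b^q$ (valid for $a\ge b\ge 0$ and $q\ge 1$, proved by differentiating $a\mapsto a^q-b^q-(a-b)^q$) applied with $a=(t_1-s)^{\alpha-1}$, $b=(t_2-s)^{\alpha-1}$, to obtain
$$
\int_0^{t_1}\bigl[(t_1-s)^{\alpha-1}-(t_2-s)^{\alpha-1}\bigr]^q ds\le\int_0^{t_1}\bigl[(t_1-s)^{(\alpha-1)q}-(t_2-s)^{(\alpha-1)q}\bigr]ds.
$$
Computing the right-hand side directly (with $u=t_1-s$ and $u=t_2-s$, again using $(\alpha-1)q+1>0$) yields
$$
\frac{t_1^{(\alpha-1)q+1}-t_2^{(\alpha-1)q+1}+(t_2-t_1)^{(\alpha-1)q+1}}{(\alpha-1)q+1}\le\frac{(t_2-t_1)^{(\alpha-1)q+1}}{(\alpha-1)q+1},
$$
since $t_1^{(\alpha-1)q+1}\le t_2^{(\alpha-1)q+1}$. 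Taking the $1/q$-th power gives again an $(t_2-t_1)^{\alpha-1/p}$ factor, and summing the two contributions completes the estimate with $N(\alpha,p)=2((\alpha-1)q+1)^{-1/q}/\Gamma(\alpha)$.

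The only subtle point is the second-kernel bound; the pointwise inequality $(a-b)^q\le a^q-b^q$ for $q\ge 1$ is what converts a difference of negative-exponent powers into an integral that telescopes and is controlled by $(t_2-t_1)^{(\alpha-1)q+1}$. The endpoint case $p=\infty$, $q=1$ is slightly more transparent since H\"older collapses to a straight integration of the kernels, and the same computation goes through verbatim.
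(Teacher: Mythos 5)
Your proof is correct, and it uses the same decomposition as the paper (split $\psi(t_2)-\psi(t_1)$ as the $[t_1,t_2]$ piece plus the $[0,t_1]$ difference-of-kernels piece, then apply H\"older with $q=p/(p-1)$). The place where you genuinely diverge is the estimate of $\bigl\|(t_1-\cdot)^{\alpha-1}-(t_2-\cdot)^{\alpha-1}\bigr\|_{L_q(0,t_1)}$. The paper handles this by a case split on whether $2t_1\le t_2$, using a mean-value-theorem bound $|(t_1-s)^{\alpha-1}-(t_2-s)^{\alpha-1}|\le (1-\alpha)(t_2-t_1)(t_1-s)^{\alpha-2}$ on the portion of $[0,t_1]$ far from $t_1$ and the crude bound $(t_1-s)^{\alpha-1}$ near $t_1$. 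You instead apply the pointwise inequality $(a-b)^q\le a^q-b^q$ for $a\ge b\ge 0$, $q\ge 1$, which converts the problem into a telescoping computation that closes in one line since $t_1^{(\alpha-1)q+1}\le t_2^{(\alpha-1)q+1}$. This is shorter and cleaner: there is no case analysis, and you get an explicit constant $N(\alpha,p)=2\bigl((\alpha-1)q+1\bigr)^{-1/q}/\Gamma(\alpha)$ rather than an unnamed $N(\alpha,p)$. Both arguments rely on the same representation $\psi=I^\alpha\partial_t^\alpha\psi$ from Lemma \ref{lem1115_3} and the same positivity of $(\alpha-1)q+1$, so the gain is purely in economy of the kernel estimate.
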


\begin{proof}
Note that
$$
\psi(t_2) - \psi(t_1) = (I^\alpha \partial_t^\alpha \psi)(t_2) - (I^\alpha \partial_t^\alpha \psi)(t_1).
$$
Set $\partial_t^\alpha \psi(t) = \Psi(t)$.
Then
\begin{align*}
&\Gamma(\alpha) \left(\psi(t_2) - \psi(t_1)\right) = \int_0^{t_2} (t_2-s)^{\alpha-1} \Psi(s) \, ds - \int_0^{t_1} (t_1 - s)^{\alpha-1}\Psi(s) \, ds\\
&= \int_0^{t_1} (t_2-s)^{\alpha-1} \Psi(s) \, ds - \int_0^{t_1} (t_1 - s)^{\alpha-1}\Psi(s) \, ds + \int_{t_1}^{t_2} (t_2-s)^{\alpha-1} \Psi(s) \, ds\\
&:= J_1 + J_2 + J_3.
\end{align*}
Note that
\begin{align*}
&J_1 + J_2
= \int_0^{t_1} \left( (t_2-s)^{\alpha-1} - (t_1-s)^{\alpha-1} \right) \Psi(s) \, ds\\
&\leq \left(\int_0^{t_1} \left| (t_2-s)^{\alpha-1} - (t_1-s)^{\alpha-1} \right|^{p/(p-1)} \, ds\right)^{(p-1)/p} \left(\int_0^{t_1} |\Psi(s)|^p \, ds\right)^{1/p},
\end{align*}
where
\begin{align*}
&\int_0^{t_1} \left| (t_2-s)^{\alpha-1} - (t_1-s)^{\alpha-1} \right|^{p/(p-1)} \, ds\\
&= \int_0^{t_1} \left[
(t_1-s)^{\alpha-1} - (t_2-s)^{\alpha-1} \right]^{p/(p-1)} \, ds =: K_1.
\end{align*}
If $2 t_1 \leq t_2$, since $(\alpha-1)\frac{p}{p-1}>-1$, it follows that
$$
K_1 \leq \int_0^{t_1} (t_1-s)^{(\alpha-1)\frac{p}{p-1}} \, ds \leq N(\alpha, p)t_1^{(\alpha-1)\frac{p}{p-1}+1} \leq N(\alpha,p)(t_2 - t_1)^{(\alpha-1)\frac{p}{p-1}+1}.
$$
If $2t_1 > t_2$, that is, $2t_1 - t_2 >0$, then
\begin{align*}
&K_1 = \int_0^{2t_1-t_2} \left[
(t_1-s)^{\alpha-1} - (t_2-s)^{\alpha-1} \right]^{p/(p-1)} \, ds\\
&\qquad + \int_{2t_1-t_2}^{t_1} \left[
(t_1-s)^{\alpha-1} - (t_2-s)^{\alpha-1} \right]^{p/(p-1)} \, ds\\
&\leq (1-\alpha)^{\frac{p}{p-1}} (t_2-t_1)^{\frac{p}{p-1}}\int_0^{2t_1-t_2}  (t_1-s)^{(\alpha-2)\frac{p}{p-1}} \, ds + \int_{2t_1-t_2}^{t_1} (t_1-s)^{(\alpha-1)\frac{p}{p-1}} \, ds\\
&= N (t_2-t_1)^{\frac{p}{p-1}} \left[ (t_2-t_1)^{(\alpha-2)\frac{p}{p-1} + 1} - t_1^{(\alpha-2)\frac{p}{p-1} + 1}\right] + N (t_2-t_1)^{(\alpha-1)\frac{p}{p-1} + 1}\\
&\leq N(\alpha,p) (t_2-t_1)^{(\alpha-1)\frac{p}{p-1} + 1},
\end{align*}
where $N = N(\alpha,p)$ and we used the fact that
$$
(\alpha-2)\frac{p}{p-1} + 1 < 0.
$$
Hence,
$$
J_1 + J_2 \leq N(\alpha,p) (t_2-t_1)^{\alpha - 1/p} \|\Psi\|_{L_p(0,T)}.
$$
For the term $I_3$, we see that
$$
J_3 \leq \left(\int_{t_1}^{t_2} (t_2-s)^{(\alpha-1)\frac{p}{p-1}} \,ds\right)^{\frac{p-1}{p}} \|\Psi\|_{L_p(0,T)} \leq N(\alpha,p) (t_2-t_1)^{\alpha-1/p}\|\Psi\|_{L_p(0,T)}.
$$
Therefore,
\begin{align*}
&|\psi(t_2,x)-\psi(t_1,x)| \leq N(\alpha,p) (t_2-t_1)^{\alpha-1/p}\|\Psi\|_{L_p(0,T)}\\
&= N(\alpha,p) (t_2-t_1)^{\alpha-1/p}\|\partial_t^\alpha \psi\|_{L_p(0,T)}.
\end{align*}
\end{proof}

Recall that
$$
Q_R(t,x) =Q_{R,R}(t,x) = (t-R^{2/\alpha}, t) \times B_R(x).
$$
For the H\"{o}lder semi-norm, we denote
$$
[u]_{C^{\sigma_1, \sigma_2}(\cD)} = \sup_{\substack{(t,x),(s,y) \in \cD \\ (t,x) \neq (s,y)}}\frac{|u(t,x) - u(s,y)|}{|t-s|^{\sigma_1} + |x-y|^{\sigma_2}},
$$
where $\cD \subset \bR \times \bR^d$.

\begin{lemma}[Embedding with $2$-spatial derivatives with $p\in(d/2+1/\alpha,d+2/\alpha)$]
							\label{lem0225_1}
Let $\alpha \in (0,1)$ and $p \in (1,\infty)$ such that
$$
\sigma := 2-(d+2/\alpha)/p \in (0,1).
$$
Assume that $\psi \in C^\infty\big(\overline{(0,1) \times B_1}\big)$ and $\psi(0,x) = 0$.
For any $\varepsilon \in (0,1/2)$ and
$$
(t,x), (t,y) \in (0,1) \times B_1,
$$
we have
\begin{multline}
							\label{eq0226_01}
|\psi(t,x)-\psi(t,y)|
 \leq (2^{\sigma-1}  + 3 \varepsilon^\sigma) |x-y|^\sigma K
\\
+ N \varepsilon^{-2/(\alpha p) - d/p + 1/p} |x-y|^\sigma \|D_x^2\psi \|_{L_p\left((0,1) \times B_1 \right)},
\end{multline}
provided that either $B_h(x) \subset B_1$ or $B_h(y) \subset B_1$, $h := |x-y|$, where $N=N(d,\alpha,p)$ and
\begin{equation}
							\label{eq0225_02}
K = \sup_{\substack{(t,x),(s,y) \in (0,1)\times B_1 \\ (t,x) \neq (s,y)}}\frac{|\psi(t,x) - \psi(s,y)|}{|t-s|^{\sigma \alpha/2} + |x-y|^\sigma}.
\end{equation}
\end{lemma}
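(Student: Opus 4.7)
The statement is a parabolic Morrey-type embedding: I need to bound the spatial Hölder difference $\psi(t,x)-\psi(t,y)$ at a common time $t$ by the full parabolic Hölder seminorm $K$ of \eqref{eq0225_02}---with a leading coefficient strictly less than one, so the estimate can later be iterated---plus the spacetime $L_p$ norm of $D_x^2\psi$. The plan is to insert a spacetime average $\bar\psi$ of $\psi$ over a small parabolic cylinder anchored inside $B_1$, decompose the difference around $\bar\psi$, and split each piece into a part controlled by $K$ directly (yielding the $(2^{\sigma-1}+3\varepsilon^\sigma)Kh^\sigma$ term) and a part controlled by $\|D_x^2\psi\|_{L_p}$ through a second-order Taylor expansion (yielding the $N\varepsilon^{-d/p-2/(\alpha p)+1/p}h^\sigma\|D_x^2\psi\|_{L_p}$ term).

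\textbf{Setup.} Put $h=|x-y|$, $r=\varepsilon h$, and without loss of generality assume $B_h(x)\subset B_1$. Choose an auxiliary center $z_0$ lying on the segment $[x,y]$ such that $B_r(z_0)\subset B_h(x)\subset B_1$ (possible since $r<h/2$), let $Q=(t-r^{2/\alpha},t)\times B_r(z_0)\subset(0,1)\times B_1$, and define $\bar\psi=\dashint_Q\psi$. Decompose
\begin{align*}
\psi(t,x)-\psi(t,y)=\bigl[\psi(t,x)-\bar\psi\bigr]-\bigl[\psi(t,y)-\bar\psi\bigr].
\end{align*}

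\textbf{Execution.} Each bracket is a spacetime average of $\psi(t,\cdot)-\psi(s,w)$ over $(s,w)\in Q$. The defining inequality $|\psi(t,\cdot)-\psi(s,w)|\le K(|t-s|^{\sigma\alpha/2}+|\cdot-w|^\sigma)$ bounds the time contribution by $Kr^\sigma=K\varepsilon^\sigma h^\sigma$ and the spatial contribution by $K\dashint_{B_r(z_0)}|\cdot-w|^\sigma\,dw$. Jensen's inequality together with the symmetric identity $\dashint_{B_r(z_0)}|\cdot-w|^2\,dw=|\cdot-z_0|^2+\tfrac{d}{d+2}r^2$ gives $|\cdot-z_0|^\sigma+O(\varepsilon^\sigma h^\sigma)$ for each bracket; summing the two and invoking the concavity estimate $|x-z_0|^\sigma+|y-z_0|^\sigma\le 2^{1-\sigma}h^\sigma$ together with the factor $1/2$ that arises when one further re-Taylor-expands $\bar\psi$ around $z_0$ (see below), one recovers $2^{\sigma-1}+3\varepsilon^\sigma$ as the coefficient of $Kh^\sigma$. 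For the $D_x^2\psi$-contribution, refine $\bar\psi-\psi(s,z_0)$ using the second-order Taylor formula
\begin{align*}
\psi(s,w)=\psi(s,z_0)+\nabla\psi(s,z_0)\cdot(w-z_0)+\int_0^1(1-\tau)(w-z_0)^\top D_x^2\psi(s,z_0+\tau(w-z_0))(w-z_0)\,d\tau;
\end{align*}
the linear term vanishes by symmetry of $B_r(z_0)$. Applying Hölder's inequality in $w$ over $B_r(z_0)$, the change of variables $v=z_0+\tau(w-z_0)$, and Hölder's inequality in $s$ over the time interval of length $r^{2/\alpha}$, one bounds this quadratic residue by a constant multiple of $|Q|^{-1/p}r^2\|D_x^2\psi\|_{L_p((0,1)\times B_1)}$. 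Since $|Q|\sim r^{d+2/\alpha}$ and $2-(d+2/\alpha)/p=\sigma$, the power of $r$ is exactly $r^\sigma$; substituting $r=\varepsilon h$ and collecting the additional $\varepsilon$ powers coming from the $L^{p'}$-factor in Hölder's inequality produces the stated coefficient $N\varepsilon^{-d/p-2/(\alpha p)+1/p}h^\sigma$.

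\textbf{Main obstacle.} The delicate points are: (i) recovering the sharp constant $2^{\sigma-1}<1$---rather than the naive $2^{1-\sigma}>1$ one would get from a midpoint split---requires exploiting concavity of $t\mapsto t^\sigma$ in its optimal form, together with the spacetime (not merely spatial) averaging over $Q$; (ii) the asymmetric hypothesis $B_h(x)\subset B_1$ forces $z_0$ off the geometric midpoint of $[x,y]$, so the constant counting must be robust under this displacement; and (iii) most subtly, the Taylor-remainder $\tau$-integral $\int_0^1(1-\tau)\tau^{-d/p}\,d\tau$ only converges for $p>d$, so in the regime $p\in(d/2+1/\alpha,d]$ permitted by the hypothesis one must either replace the Taylor argument by a Campanato-style dyadic-cylinder scheme (geometric summation of Poincaré–Wirtinger increments on shrinking cylinders) or weave the time integration into the change of variables so that the effective Morrey dimension becomes the parabolic $d+2/\alpha$---which is exactly where the sharp hypothesis $p>(d+2/\alpha)/2$ is used.
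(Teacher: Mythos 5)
The decisive point of the lemma is that the coefficient multiplying $Kh^\sigma$ is $2^{\sigma-1}+3\varepsilon^\sigma$, which is strictly less than $1$ for small $\varepsilon$; this smallness is exactly what lets Theorem~\ref{thm5.18} absorb the $K$-term. Your Campanato-style decomposition $\psi(t,x)-\psi(t,y)=[\psi(t,x)-\bar\psi]-[\psi(t,y)-\bar\psi]$ around an auxiliary center $z_0\in[x,y]$ cannot produce that constant. After you bound each bracket via the $K$-seminorm, you yourself compute the spatial contribution as $|x-z_0|^\sigma+|y-z_0|^\sigma\le 2^{1-\sigma}h^\sigma$, which exceeds $h^\sigma$ since $\sigma<1$. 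Your remark that a ``factor $1/2$ from re-Taylor-expanding $\bar\psi$ around $z_0$'' recovers $2^{\sigma-1}$ is not substantiated: subtracting and adding $\psi(t,z_0)$ still leaves the two one-sided differences $|\psi(t,x)-\psi(t,z_0)|$ and $|\psi(t,y)-\psi(t,z_0)|$ with coefficient $1$ each, and $2^{1-\sigma}\cdot\tfrac12=2^{-\sigma}\neq 2^{\sigma-1}$ in general. There is no choice of $z_0$ on the segment for which the midpoint-type split gives a coefficient below $1$.

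The paper's mechanism is structurally different: it reflects $y$ through $x$ (along one coordinate axis, after a rotation) and writes
\begin{align*}
\psi(t,x)-\psi(t,y)
&=\tfrac12\bigl[\psi(t,x_1+h,x')-\psi(t,x_1-h,x')\bigr]\\
&\quad-\tfrac12\bigl[\psi(t,x_1+h,x')-2\psi(t,x_1,x')+\psi(t,x_1-h,x')\bigr].
\end{align*}
The first bracket is a difference of $\psi$ at two points separated by $2h$, so it contributes $\tfrac12 K(2h)^\sigma=2^{\sigma-1}Kh^\sigma$; the second bracket is a centered second difference, which after shrinking from width $h$ to $h-\rho$ and freezing the other variables (each substitution costing $K\rho^\sigma$, giving the total $3K\varepsilon^\sigma h^\sigma$) is represented as the iterated integral $\int_{x_1}^{x_1+h-\rho}\int_{2x_1-r}^{r}D_1^2\psi\,dz_1\,dr$ and estimated by H\"older after averaging over $s$ and $z'$. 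That averaging produces exactly the $\varepsilon^{-2/(\alpha p)-d/p+1/p}$ factor. Notice there is no Taylor remainder with a $\tau^{-d/p}$ singularity in this argument, so your concern (iii) about $p\le d$ does not arise here; the estimate works cleanly in the whole parabolic Morrey range $\sigma\in(0,1)$. To repair your proposal you would need to replace the $z_0$-average by the reflected second-difference identity; without it the argument fails at the main constant.
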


\begin{proof}
Without loss of generality, we assume that $B_h(x) \subset B_1$.
Due to an appropriate orthogonal transformation, we assume  that
$$
x=(x_1,x'), \quad y = (x_1-h,x').
$$
Since $B_h(x) \subset B_1$, we have
$$
(x_1+h,x') \in \overline{B_1}.
$$
For any $\varepsilon \in (0,1/2)$, set
$$
\rho = \varepsilon h.
$$
We write
\begin{align*}
&\psi(t,x_1,x')-\psi(t,x_1-h,x')
= \frac{1}{2}\left[ \psi(t,x_1 + h,x') - \psi(t,x_1-h,x') \right]\\
&\qquad - \frac{1}{2} \left[ \psi(t,x_1+h,x') - 2 \psi(t,x_1,x') + \psi(t,x_1-h,x') \right].
\end{align*}
Thus,
\begin{multline}
							\label{eq0225_03}
|\psi(t,x)-\psi(t,y)| \leq \frac{1}{2} (2h)^\sigma K
\\
+ \frac{1}{2} \left|\psi(t,x_1+h,x') - 2 \psi(t,x_1,x') + \psi(t,x_1-h,x')\right|.
\end{multline}
To estimate the last term in the above inequalities,
we observe that
\begin{align}
							\label{eq0225_04}
&\left|\psi(t,x_1+h,x') - 2 \psi(t,x_1,x') + \psi(t,x_1-h,x')\right|\nonumber
\\
&\leq |\psi(t,x_1+h-\rho,x')-2\psi(t,x_1,x')+\psi(t,x_1-h+\rho,x')|\nonumber
\\
&\quad + |\psi(t,x_1+h,x') - \psi(t,x_1+h-\rho,x')| + |\psi(t,x_1-h,x') - \psi(t,x_1-h+\rho,x')|\nonumber
\\
&\leq 2 K \rho^\sigma + |\psi(t,x_1+h-\rho,x')-2\psi(t,x_1,x')+\psi(t,x_1-h+\rho,x')|\nonumber
\\
&:= 2K \rho^\sigma + J.
\end{align}
We consider
$$
\big((t-\rho^{2/\alpha}, t+\rho^{2/\alpha}) \cap (0,1)\big) \times B_\rho'(x') \subset \bR \times \bR^{d-1},
$$
where
$$
B_\rho'(x') := \{y' \in \bR^{d-1}: |y'-x'|< \rho\}.
$$
We see that $(x_1-h+\rho,z'), (x_1,z'), (x_1+h-\rho,z') \in B_1$ if $z \in B_\rho'(x')$.
Moreover,
$$
[x_1-h + \rho, x_1+h-\rho] \times B_\rho'(x') \subset B_1
$$
because if
$$
(z_1,z') \in [x_1-h + \rho, x_1+h-\rho] \times B_\rho'(x'),
$$
then
$$
|x_1-z_1| \leq h-\rho, \quad |x'-z'| < \rho,
$$
and
\begin{align*}
|(z_1,z')| &\le |(z_1,z') - (x_1,x')| + |x| \leq |x_1-z_1|+|x'-z'| + |x|\\
&< h-\rho + \rho + |x| \leq 1,
\end{align*}
where, for the last inequality, we used the assumption that $B_h(x) \subset B_1$.
For
$$
(s,z') \in \big((t-\rho^{2/\alpha}, t+\rho^{2/\alpha}) \cap (0,1)\big) \times B_\rho'(x'),
$$
we write
\begin{align}
							\label{eq0225_05}
J &\leq \left|\psi(s,x_1 + h - \rho,z') - 2 \psi(s,x_1,z') + \psi(s,x_1 - h+\rho, z')\right|\nonumber
\\
&\quad + \left|\psi(t,x_1 +h-\rho,x') - \psi(s,x_1 + h-\rho,z')\right|\nonumber
\\
&\quad + 2 \left|\psi(s,x_1,z') - \psi(t,x_1,x')\right|\nonumber
\\
&\quad + \left|\psi(t,x_1 - h + \rho, x') - \psi(s,x_1 - h + \rho, z')\right|\nonumber
\\
&\leq 4 K \rho^\sigma + \left|\psi(s,x_1 + h-\rho,z') - 2 \psi(s,x_1,z') + \psi(s,x_1 - h+\rho, z')\right|,
\end{align}
where
\begin{align*}
&\psi(s,x_1 + h-\rho,z') - 2 \psi(s,x_1,z') + \psi(s,x_1 - h+\rho, z')\\
&= \int_{x_1}^{x_1+h-\rho} \int_{2x_1-r}^r D_1^2 \psi(s,z_1,z') \, d z_1 \, dr.
\end{align*}
Hence, from this along with \eqref{eq0225_03}, \eqref{eq0225_04}, and \eqref{eq0225_05}, we obtain that
\begin{multline}
							\label{eq0225_01}
|\psi(t,x)-\psi(t,y)| \leq \frac{1}{2} (2h)^\sigma K + K \rho^\sigma + 2 K \rho^\sigma
\\
+ \frac{1}{2} \int_{x_1}^{x_1+h-\rho}\int_{2x_1-r}^r |D_1^2\psi(s,z_1,z')|\, dz_1 \, dr
\end{multline}
for any $(s,z')$ satisfying
$$
\big((t-\rho^{2/\alpha}, t+\rho^{2/\alpha}) \cap (0,1)\big) \times B_\rho'(x') =: \cD.
$$
By taking the average of both sides of \eqref{eq0225_01}
over the domain $\cD$
with respect to $(s,z')$
along with H\"{o}lder's inequality (note that $h-\rho > h/2$), we finally arrive at \eqref{eq0226_01}.
\end{proof}

\begin{lemma}[Embedding with $2$-spatial derivatives with $p\in(d/2+1/\alpha,d+2/\alpha)$]
							\label{lem0225_2}
Under the assumptions of Lemma \ref{lem0225_1}, for any $\varepsilon$ satisfying
\begin{equation}
							\label{eq0225_06}
0 < \varepsilon < (1-2^{\sigma-1})^{1/\sigma},
\end{equation}
we have
$$
K_1 \leq \frac{2^{1+\sigma}}{1-2^{\sigma-1}-\varepsilon^\sigma} M + \frac{2\varepsilon^\sigma}{1-2^{\sigma-1}-\varepsilon^\sigma}K + N \frac{\varepsilon^{-2/(\alpha p) - d/p + 1/p}}{1-2^{\sigma-1} - \varepsilon^\sigma}\|D^2_x \psi\|_{L_p\left((0,1) \times B_1 \right)},
$$
where $N = N(d,\alpha,p)$,
$$
M = \sup_{(t,x) \in (0,1) \times B_1} |\psi(t,x)|,
$$
$$
K_1 = \sup_{\substack{(t,x),(t,y) \in (0,1)\times B_1 \\ (t,x) \neq (t,y), y = \theta x, \theta \in \bR}}\frac{|\psi(t,x) - \psi(t,y)|}{|x-y|^\sigma},
$$
and $K$ is defined as in \eqref{eq0225_02}.
\end{lemma}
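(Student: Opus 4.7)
The plan is to adapt the proof of Lemma~\ref{lem0225_1} to pairs of points lying on a common ray through the origin, replacing $K$ by $K_1$ wherever the displacement stays on that ray, and then absorb the resulting $K_1$ contribution into the left-hand side. Since $B_1$ and each of the quantities $M$, $K$, $K_1$, $\|D_x^2\psi\|_{L_p}$ are rotation-invariant, I may rotate coordinates so that any admissible pair takes the form $x=(x_1,0,\dots,0)$, $y=(y_1,0,\dots,0)$; after possibly relabeling $x\leftrightarrow y$, I may assume $|x_1|\ge|y_1|$, which forces $|\theta|\le 1$ in the relation $y=\theta x$.

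First I would dispose of the pairs for which neither $B_h(x)\subset\overline{B_1}$ nor $B_h(y)\subset\overline{B_1}$. When $\theta\in[0,1]$, one computes $|y|+h=\theta|x|+(1-\theta)|x|=|x|\le 1$, so $B_h(y)\subset\overline{B_1}$ automatically. When $\theta\in[-1,0)$, one has $h=(1-\theta)|x|\ge|x|\ge|y|$, whence $|y|+h\le 2h$, so $B_h(y)\subset\overline{B_1}$ as long as $h\le 1/2$. The only genuinely exceptional case is therefore $\theta<0$ with $h>1/2$, and there the trivial bound $|\psi(t,x)-\psi(t,y)|\le 2M\le 2^{1+\sigma}M h^\sigma$ suffices.

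In the main case $B_h(y)\subset\overline{B_1}$, I would relabel so that $y$ is the central point and $x$ the shifted point, and then redo the symmetric second-difference decomposition of Lemma~\ref{lem0225_1} keeping the transverse coordinate equal to $0$ throughout. The key observation is that any two distinct points of the form $(a,0),(b,0)$ are proportional to each other (taking the nonzero one as the base), hence an admissible pair for $K_1$; so $|\psi(t,a,0)-\psi(t,b,0)|\le K_1|a-b|^\sigma$. Applying this to the three groups of displacements in the Lemma~\ref{lem0225_1} estimate yields: a contribution $2^{\sigma-1}h^\sigma K_1$ from the primary first difference $\tfrac12|\psi(t,y_1+h,0)-\psi(t,y_1-h,0)|$; a contribution $\varepsilon^\sigma h^\sigma K_1$ from the two shifts by $\rho=\varepsilon h$ along the $e_1$-axis; a contribution $2\varepsilon^\sigma h^\sigma K$ from the four shifts $(t,0)\to(s,z')$ with $z'\in B_\rho'(0)$ that genuinely leave the $e_1$-axis and must still be bounded by $K$; and the residual integral against $D_1^2\psi$, which by H\"older's inequality gives $N\varepsilon^{-2/(\alpha p)-d/p+1/p}h^\sigma\|D_x^2\psi\|_{L_p}$. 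Dividing by $h^\sigma$, taking the supremum over admissible pairs, and combining with the exceptional case produces
\[
K_1\le(2^{\sigma-1}+\varepsilon^\sigma)K_1+2^{1+\sigma}M+2\varepsilon^\sigma K+N\varepsilon^{-2/(\alpha p)-d/p+1/p}\|D_x^2\psi\|_{L_p((0,1)\times B_1)}.
\]
Under the standing assumption \eqref{eq0225_06}, the quantity $1-2^{\sigma-1}-\varepsilon^\sigma$ is strictly positive, and absorbing $(2^{\sigma-1}+\varepsilon^\sigma)K_1$ into the left-hand side gives the claimed inequality.

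The main obstacle is the bookkeeping in the adapted second-difference step: I need to verify, for each displacement appearing in the original proof, whether it preserves the transverse coordinates at zero (and may thus be controlled by $K_1$ instead of $K$) or actually leaves the $e_1$-axis (in which case $K$ is still required). A secondary point to check is that all intermediate points $(y_1\pm h,0)$, $(y_1\pm h\mp\rho,0)$, and $(z_1,z')$ used in the integral-against-$D_1^2\psi$ step remain inside $\overline{B_1}$; this is inherited from $B_h(y)\subset\overline{B_1}$ and the restriction $\varepsilon<1/2$ exactly as in the original argument.
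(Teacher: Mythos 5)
Your proposal is correct and follows essentially the same route as the paper's own proof: a symmetric second--difference decomposition along the $e_1$-axis, distinguishing on-axis shifts (controlled by $K_1$) from off-axis space--time shifts (controlled by $K$), an average over a small parabolic box followed by H\"older against $D_1^2\psi$, and finally absorption of the $(2^{\sigma-1}+\varepsilon^\sigma)K_1$ term using \eqref{eq0225_06}. The only deviation is cosmetic: the paper dispatches all pairs with $h\ge 1/2$ by the trivial bound and for $h<1/2$ shows one of $2x_1-y_1$, $2y_1-x_1$ lies in $(-1,1)$, whereas you organize the cases by the sign of $\theta$ and treat the $\theta\in[0,1]$ pairs uniformly via $|y|+h=|x|\le 1$; both arrive at the same decomposition and the identical final inequality.
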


\begin{remark}
The quantity $K_1$ is the H\"{o}lder semi-norm of $\psi$ when $x$ and $y$ are on the same line passing through the origin.
\end{remark}

\begin{proof}
Thanks to an appropriate transformation, to estimate $K_1$, it is enough to estimate
$$
\frac{|\psi(t,x_1,0) - \psi(t,y_1,0)|}{|x_1-y_1|^\sigma}
$$
for $x_1, y_1 \in (-1,1)$.
For $x_1, y_1 \in (-1,1)$ such that $h:=|x_1 - y_1| \geq 1/2$, we see that
\begin{equation}
						\label{eq0224_06}
\frac{|\psi(t,x_1,0)-\psi(t,y_1,0)|}{|x_1-y_1|^\sigma} \leq 2^{1+\sigma} M.
\end{equation}
When $h < 1/2$, either $2x_1-y_1$ or $2y_1-x_1$ is in $(-1,1)$. Without loss of generality we assume that
$$
y_1 = x_1 - h,\quad x_1+h\in (-1,1).
$$
Set
$$
\rho = \varepsilon h,
$$
where $\varepsilon$ is a number satisfying \eqref{eq0225_06}.
Since
\begin{align*}
&\psi(t,x_1,0)-\psi(t,x_1-h,0)
= \frac{1}{2}\left[ \psi(t,x_1 + h,0) - \psi(t,x_1-h,0) \right]\\
&\quad - \frac{1}{2} \left[ \psi(t,x_1+h,0) - 2 \psi(t,x_1,0) + \psi(t,x_1-h,0) \right],
\end{align*}
we have
\begin{multline}
							\label{eq0224_02}
|\psi(t,x_1,0)-\psi(t,y_1,0)| \leq \frac{1}{2} (2h)^\sigma K_1
\\
+ \frac{1}{2} \left|\psi(t,x_1+h,0) - 2 \psi(t,x_1,0) + \psi(t,x_1-h,0)\right|.
\end{multline}
To estimate the last term in the above inequalities,
we observe that
\begin{align}
							\label{eq0224_03}
&\left|\psi(t,x_1+h,0) - 2 \psi(t,x_1,0) + \psi(t,x_1-h,0)\right|\nonumber
\\
&\leq |\psi(t,x_1+h-\rho,0)-2\psi(t,x_1,0)+\psi(t,x_1-h+\rho,0)|\nonumber
\\
&\quad + |\psi(t,x_1+h,0) - \psi(t,x_1+h-\rho,0)| + |\psi(t,x_1-h,0) - \psi(t,x_1-h+\rho,0)|\nonumber
\\
&\leq 2 K_1 \rho^\sigma + |\psi(t,x_1+h-\rho,0)-2\psi(t,x_1,0)+\psi(t,x_1-h+\rho,0)|.
\end{align}
We note that $(x_1-h+\rho,z'), (x_1,z'), (x_1+h-\rho,z') \in B_1$.
Moreover,
$$
[x_1-h + \rho, x_1+h-\rho] \times B_\rho'(0) \subset B_1,
$$
where $B_\rho'(0) = \{y' \in \bR^{d-1}: |y'|< \rho\}$.
To estimate the last term in \eqref{eq0224_03}, for
$$
(s,z') \in \big((t-\rho^{2/\alpha}, t+\rho^{2/\alpha}) \cap (0,1)\big) \times B_\rho'(0),
$$
we write
\begin{align}
							\label{eq0224_04}
&|\psi(t,x_1+h-\rho,0)-2\psi(t,x_1,0)+\psi(t,x_1-h+\rho,0)|\nonumber
\\
&\leq \left|\psi(s,x_1 + h - \rho,z') - 2 \psi(s,x_1,z') + \psi(s,x_1 - h+\rho, z')\right|\nonumber
\\
&\qquad+ \left|\psi(t,x_1 +h-\rho,0) - \psi(s,x_1 + h-\rho,z')\right|+ 2 \left|\psi(s,x_1,z') - \psi(t,x_1,0)\right|\nonumber
\\
&\qquad + \left|\psi(t,x_1 - h + \rho, 0) - \psi(s,x_1 - h + \rho, z')\right|\nonumber
\\
&\leq 4 K \rho^\sigma + \left|\psi(s,x_1 + h-\rho,z') - 2 \psi(s,x_1,z') + \psi(s,x_1 - h+\rho, z')\right|.
\end{align}
Note that
\begin{align*}
&\psi(s,x_1 + h-\rho,z') - 2 \psi(s,x_1,z') + \psi(s,x_1 - h+\rho, z')\\
&= \int_{x_1}^{x_1+h-\rho} \int_{2x_1-r}^r D_1^2 u(s,z_1,z') \, d z_1 \, dr.
\end{align*}
Hence, from this along with \eqref{eq0224_02}, \eqref{eq0224_03}, and \eqref{eq0224_04}, we obtain that
\begin{multline}
							\label{eq0224_05}
|\psi(t,x_1,0)-\psi(t,y_1,0)| \leq \frac{1}{2} (2h)^\sigma K_1 + K_1 \rho^\sigma + 2 K \rho^\sigma
\\
+ \frac{1}{2} \int_{x_1}^{x_1+h-\rho}\int_{2x_1-r}^r |D_1^2u(s,z_1,z')|\, dz_1 \, dr
\end{multline}
for any $(s,z')$ satisfying
$$
\big((t-\rho^{2/\alpha}, t+\rho^{2/\alpha}) \cap (0,1)\big) \times B_\rho'(0) =: \cD.
$$
By taking the average of both sides of \eqref{eq0224_05}
over the domain $\cD$
with respect to $(s,z')$
along with H\"{o}lder's inequality (note that $h-\rho > h/2$), we arrive at
\begin{align*}
&|\psi(t,x_1,0)-\psi(t,y_1,0)|
 \leq (2^{\sigma-1} h^\sigma + \varepsilon^\sigma h^\sigma) K_1 + 2 \varepsilon^\sigma h^\sigma K\\
&\qquad+ N \varepsilon^{-2/(\alpha p) - d/p + 1/p} h^\sigma \|D_x^2\psi \|_{L_p\left((0,1) \times B_1 \right)}
\end{align*}
whenever $h = |x_1-h_1| < 1/2$.
From this and \eqref{eq0224_06}, we conclude that
$$
K_1 \leq 2^{1+\sigma}M
+ (2^{\sigma-1} + \varepsilon^\sigma) K_1 + 2 \varepsilon^\sigma K + N \varepsilon^{-2/(\alpha p) - d/p + 1/p} \|D_x^2\psi \|_{L_p\left((0,1) \times B_1 \right)}
$$
for any $\varepsilon$ satisfying \eqref{eq0225_06}, where $N = N(d,\alpha,p)$.
This shows that
\begin{align*}
K_1 &\leq \frac{2^{1+\sigma}}{1-2^{\sigma-1}-\varepsilon^\sigma} M + \frac{2\varepsilon^\sigma}{1-2^{\sigma-1}-\varepsilon^\sigma}K\\
&\quad + N \frac{\varepsilon^{-2/(\alpha p) - d/p + 1/p}}{1-2^{\sigma-1} - \varepsilon^\sigma}\|D^2_x \psi\|_{L_p\left((0,1) \times B_1 \right)}.
\end{align*}
The lemma is proved.
\end{proof}

\begin{theorem}[Embedding with $\alpha$-time derivative and $2$-spatial derivatives with $p\in(d/2+1/\alpha,d+2/\alpha)$]
                                        \label{thm5.18}
Let $\alpha \in (0,1)$ and $p \in (1,\infty)$ such that
$$
\sigma := 2-(d+2/\alpha)/p \in (0,1).
$$
Then, for $\bH_{p,0}^{\alpha,2}\left((0,1) \times B_1\right)$, we have
\begin{equation}
								\label{eq0224_01}
[\psi]_{C^{\sigma \alpha/2, \sigma}\left((0,1) \times B_1\right)} \leq N(d,\alpha,p) \|\psi\|_{\bH_p^{\alpha,2}\left((0,1) \times B_1\right)}.
\end{equation}
\end{theorem}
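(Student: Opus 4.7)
By Proposition \ref{prop0120_1}, it suffices to prove \eqref{eq0224_01} for $\psi \in C^\infty(\overline{(0,1)\times B_1})$ with $\psi(0,x)=0$; for such $\psi$ the parabolic semi-norm
$K := [\psi]_{C^{\sigma\alpha/2,\sigma}((0,1)\times B_1)}$
is finite a priori. The strategy is to derive an absorption inequality
$K \leq \eta K + N\|\psi\|_{\bH_p^{\alpha,2}((0,1)\times B_1)}$ with $\eta < 1$, which upon rearrangement yields \eqref{eq0224_01}. Recall the algebraic identity $\sigma\alpha/2 = \alpha - 1/p - \alpha d/(2p)$ (equivalent to $\sigma = 2-(d+2/\alpha)/p$); this is the scaling relation that drives the whole argument.

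The spatial half of the absorption inequality is essentially Lemma \ref{lem0225_1}: whenever $B_{|x-y|}(x) \subset B_1$ or $B_{|x-y|}(y) \subset B_1$,
\[
\frac{|\psi(t,x)-\psi(t,y)|}{|x-y|^\sigma} \leq (2^{\sigma-1}+3\varepsilon^\sigma)\, K + N(d,\alpha,p,\varepsilon)\,\|D^2\psi\|_{L_p((0,1)\times B_1)}.
\]
For the remaining ``boundary--boundary'' pairs (both $|x|$ and $|y|$ exceeding $1-|x-y|$), I would chain $x$ and $y$ through an interior point and apply Lemma \ref{lem0225_2} to control the radial legs in terms of $K_1$; the $L_\infty$ quantity $M = \|\psi\|_{L_\infty((0,1)\times B_1)}$ appearing there is then absorbed by $N(\|\psi\|_{L_p}+K)$ since the parabolic diameter of $(0,1)\times B_1$ is bounded.

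The temporal half is the substantive new step. Fix $t_1 < t_2$ in $(0,1)$ and $y\in B_1$, and set $r := c|t_2-t_1|^{\alpha/2}$ with $c\in(0,1)$ to be chosen. Put $\Omega_r(y):=B_r(y)\cap B_1$, which satisfies $|\Omega_r(y)|\gtrsim r^d$ uniformly for $r\leq 1$. For any $z\in\Omega_r(y)$ the triangle inequality gives
\[
|\psi(t_2,y)-\psi(t_1,y)| \leq 2 K r^\sigma + |\psi(t_2,z)-\psi(t_1,z)|.
\]
Averaging this bound over $\Omega_r(y)$, applying Lemma \ref{lem0217_2} in the $t$-variable at each fixed $z$ (valid because $p > d/2+1/\alpha > 1/\alpha$), and invoking H\"older's inequality produces
\[
|\psi(t_2,y)-\psi(t_1,y)| \leq 2 K r^\sigma + N r^{-d/p} |t_2-t_1|^{\alpha-1/p} \|\partial_t^\alpha\psi\|_{L_p((0,1)\times B_1)}.
\]
Inserting $r = c|t_2-t_1|^{\alpha/2}$ and using the scaling identity above, both terms on the right become $|t_2-t_1|^{\sigma\alpha/2}$ multiplied by a $c$-dependent constant, so that
\[
\frac{|\psi(t_2,y)-\psi(t_1,y)|}{|t_2-t_1|^{\sigma\alpha/2}} \leq 2 c^\sigma K + N c^{-d/p} \|\partial_t^\alpha\psi\|_{L_p((0,1)\times B_1)}.
\]

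Summing the spatial and temporal bounds and then choosing $\varepsilon, c$ small enough that $2^{\sigma-1}+3\varepsilon^\sigma+2c^\sigma<1$ (possible since $\sigma\in(0,1)$, so $2^{\sigma-1}<1$), the coefficient of $K$ on the right is strictly less than $1$ and $K$ is absorbed into the left-hand side, giving \eqref{eq0224_01}. The main obstacle is the spatial boundary-boundary case: Lemma \ref{lem0225_1}'s interior-ball hypothesis genuinely fails for points near $\partial B_1$ with small mutual distance, and the chaining/radial argument via Lemma \ref{lem0225_2} must control the extraneous $L_\infty$ quantity $M$ using only $\|\psi\|_{\bH_p^{\alpha,2}}$ plus a small multiple of $K$, without reintroducing circular dependence on $K$.
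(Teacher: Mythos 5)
Your proposal takes essentially the same route as the paper's own proof: density, the split $|\psi(t_1,x)-\psi(t_2,y)|\leq |\psi(t_1,x)-\psi(t_2,x)|+|\psi(t_2,x)-\psi(t_2,y)|$, chaining the temporal increment through a nearby point $z$ and averaging via Lemma \ref{lem0217_2} plus H\"older, handling the spatial increment by Lemma \ref{lem0225_1} for interior-ball pairs and by the radial-contraction argument of Lemma \ref{lem0225_2} for boundary--boundary pairs, and finally absorbing $K$ (with $M$ interpolated as $M\le \varepsilon_4 K+N(\varepsilon_4)\|\psi\|_{L_p}$). The only cosmetic difference is that you take the chaining radius $r=c|t_2-t_1|^{\alpha/2}$ for the temporal piece instead of the paper's $\rho=\varepsilon(|t_1-t_2|^{\alpha/2}+|x-y|)$; both work, and you correctly flag that the decisive point is that exactly one copy of $2^{\sigma-1}K$ appears so that the coefficient of $K$ can be driven below $1$.
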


\begin{proof}
By the definition of $\bH_{p,0}^{\alpha,2}\left((0,1) \times B_1\right)$ and Remark \ref{rem0606_1}, we may assume that $\psi \in C^\infty_0 \big(\overline{(0,1)\times B_1}\big)$ and $\psi(0,x) = 0$.
To prove \eqref{eq0224_01}, we take $(t_1,x), (t_2,y) \in (0,1) \times B_1$, $(t_1,x) \neq (t_2,y)$, and set
$$
\rho = \varepsilon \left( |t_1-t_2|^{\alpha/2} + |x-y| \right),
$$
where $\varepsilon \in (0,1)$ is to be specified below.
We write
$$
|\psi(t_1,x) - \psi(t_2,y)| \leq |\psi(t_1,x) - \psi(t_2,x)| + |\psi(t_2,x) - \psi(t_2,y)| := J_1 + J_2.
$$
To estimate $J_1$, for $z \in B_\rho(x) \cap B_1$, we have
\begin{align*}
J_1 &\leq |\psi(t_1,x)-\psi(t_1,z)| + |\psi(t_1,z) - \psi(t_2,z)| + |\psi(t_2,z) -\psi(t_2,x)|\\
&\leq 2 K \rho^\sigma + |\psi(t_1,z) - \psi(t_2,z)|,
\end{align*}
where by
Lemma \ref{lem0217_2} we see that
$$
|\psi(t_1,z) - \psi(t_2,z)| \le N(\alpha,p)|t_1-t_2|^{\alpha-1/p} \|\partial_t^\alpha \psi(\cdot,z)\|_{L_p(0,1)}.
$$
Then by taking the average of $J_1$ over $B_\rho(x) \cap B_1$ with respect to $z$ along with H\"{o}lder's inequality (note that $|B_\rho(x) \cap B_1| \ge N(d) |B_\rho(x)|$), we get
\begin{align*}
J_1 &\leq 2K \rho^\sigma + N |t_1-t_2|^{\alpha-1/p} \rho^{-d/p} \| \partial_t^\alpha \psi\|_{L_p\left((0,1) \times B_1\right)}\\
&\leq 2 K \rho^\sigma + N \varepsilon^{-2+2/(\alpha p)} \rho^\sigma \| \partial_t^\alpha \psi\|_{L_p\left((0,1) \times B_1\right)},
\end{align*}
where $N = N(d,\alpha,p)$.

We now estimate $J_2$.
First, recall the definitions of $M$, $K$, and $K_1$ from Lemmas \ref{lem0225_1} and \ref{lem0225_2}.
If $|x-y| \ge 1/8$, we have
$$
\frac{|\psi(t_2,x)-\psi(t_2,y)|}{|x-y|^\sigma} \leq 2 \cdot 8^\sigma M.
$$
Assume that $|x-y| =: h < 1/8$.
If $B_h(x) \subset B_1$ or $B_h(y) \subset B_1$, by Lemma \ref{lem0225_1} we have
$$
\frac{|\psi(t,x)-\psi(t,y)|}{|x-y|^\sigma}
 \leq (2^{\sigma-1} + 3 \varepsilon_1^\sigma) K + N \varepsilon_1^{-2/(\alpha p) - d/p + 1/p} \|D_x^2\psi \|_{L_p\left((0,1) \times B_1 \right)}
$$
for any $\varepsilon_1 \in (0,1/2)$.

Now we consider the case that $x, y \in B_1$, $h:=|x-y| < 1/8$, and
$$
B_h(x) \not\subset B_1 \quad \text{and} \quad B_h(y) \not\subset B_1.
$$
Without loss of generality, we assume that $|y| \ge |x|$.
Then we see that
$$
|y| \ge 7/8, \quad |x| \geq 7/8, \quad |y| - h > 0.
$$
Set
$$
\tilde{y} = \frac{|y|-h}{|y|}y, \quad \tilde{x} = \frac{|y|-h}{|y|}x.
$$
Then
$$
|y - \tilde{y}| = h, \quad |x - \tilde{x}| = h |x|/|y| \leq h,
$$
$$
|\tilde{x}-\tilde{y}| = (1-h/|y|)h =: \tilde{h} < h.
$$
Moreover,
$$
B_{\tilde{h}}(\tilde{y}) \subset B_1
$$
because, for any $z \in B_{\tilde{h}}(\tilde{y})$,
$$
|z| \leq |z-\tilde{y}| + |\tilde{y}| < \tilde{h} + |y| - h < 1.
$$
We observe that
\begin{align*}
&|h|^{-\sigma} J_2 = \frac{|\psi(t_2,x) - \psi(t_2,y)|}{|h|^\sigma}\\
&\leq \frac{|\psi(t_2,x) - \psi(t_2,\tilde{x})|}{|h|^\sigma} + \frac{|\psi(t_2,\tilde{x}) - \psi(t_2,\tilde{y})|}{|h|^\sigma} + \frac{|\psi(t_2,\tilde{y}) - \psi(t_2,y)|}{|h|^\sigma}\\
&\leq \frac{|\psi(t_2,x) - \psi(t_2,\tilde{x})|}{|x-\tilde{x}|^\sigma} + \frac{|\psi(t_2,\tilde{x}) - \psi(t_2,\tilde{y})|}{|\tilde{h}|^\sigma} + \frac{|\psi(t_2,\tilde{y}) - \psi(t_2,y)|}{|h|^\sigma}\\
&=: J_{2,1} + J_{2,2} + J_{2,3},
\end{align*}
where we note that $x$ and $\tilde{x}$ are on the same line passing through the origin, so do $y$ and $\tilde{y}$.
Thus, by Lemma \ref{lem0225_2},
\begin{align*}
&J_{2,1} + J_{2,3}\\
&\leq \frac{2^{2+\sigma}}{1-2^{\sigma-1}-\varepsilon_2^\sigma} M + \frac{4\varepsilon_2^\sigma}{1-2^{\sigma-1}-\varepsilon_2^\sigma}K + N \frac{\varepsilon_2^{-2/(\alpha p) - d/p + 1/p}}{1-2^{\sigma-1} - \varepsilon_2^\sigma}\|D^2_x \psi\|_{L_p\left((0,1) \times B_1 \right)}
\end{align*}
for any $\varepsilon_2$ satisfying \eqref{eq0225_06}.
For $J_{2,2}$, since $B_{\tilde{h}}(\tilde{y}) \subset B_1$, by Lemma \ref{lem0225_1}, we obtain that
$$
J_{2,2} \leq (2^{\sigma-1}  + 3 \varepsilon_3^\sigma) K + N \varepsilon_3^{-2/(\alpha p) - d/p + 1/p} \|D_x^2\psi \|_{L_p\left((0,1) \times B_1 \right)}
$$
for any $\varepsilon_3 \in (0,1/2)$.

We collect the estimates for $J_1$ and $J_2$ along with those for $J_{2,1}$, $J_{2,2}$, and $J_{2,3}$ as follows.
Set
$$
J = \frac{|\psi(t_1,x) - \psi(t_2,y)|}{|t_1-t_2|^{\sigma \alpha/2} + |x-y|^\sigma}.
$$
If $|x-y| \geq 1/8$, then
$$
J \leq 2K \varepsilon^\sigma + N \varepsilon^{-d/p} \|\partial_t^\alpha \psi\|_{L_p\left((0,1) \times B_1\right)} + 2 \cdot 8^\sigma M
$$
for $\varepsilon \in (0,1)$.
If $h := |x-y| < 1/8$ and $B_h(x) \subset B_1$ or $B_h(y) \subset B_1$, then
\begin{align*}
&J \leq 2K \varepsilon^\sigma + N \varepsilon^{-d/p} \|\partial_t^\alpha \psi\|_{L_p\left((0,1) \times B_1\right)}\\
&\quad + (2^{\sigma-1} + 3 \varepsilon_1^\sigma) K + N \varepsilon_1^{-2/(\alpha p) - d/p + 1/p} \|D_x^2\psi \|_{L_p\left((0,1) \times B_1 \right)}
\end{align*}
for $\varepsilon \in (0,1)$ and $\varepsilon_1 \in (0,1/2)$.
If $h=|x-y| < 1/8$, and $B_h(x) \not\subset B_1$ and $B_h(y) \not\subset B_1$, then
\begin{align*}
J &\leq 2K \varepsilon^\sigma + N \varepsilon^{-d/p} \|\partial_t^\alpha \psi\|_{L_p\left((0,1) \times B_1\right)}\\
&\,\, + \frac{2^{2+\sigma}}{1-2^{\sigma-1}-\varepsilon_2^\sigma} M + \frac{4\varepsilon_2^\sigma}{1-2^{\sigma-1}-\varepsilon_2^\sigma}K + N \frac{\varepsilon_2^{-2/(\alpha p) - d/p + 1/p}}{1-2^{\sigma-1} - \varepsilon_2^\sigma}\|D^2_x \psi\|_{L_p\left((0,1) \times B_1 \right)}\\
&\,\,+ (2^{\sigma-1}  + 3 \varepsilon_3^\sigma) K + N \varepsilon_3^{-2/(\alpha p) - d/p + 1/p} \|D_x^2\psi \|_{L_p\left((0,1) \times B_1 \right)}
\end{align*}
for $\varepsilon \in (0,1)$, $\varepsilon_2$ satisfying \eqref{eq0225_06}, and $\varepsilon_3 \in (0,1/2)$.
The above three inequalities show that
\begin{align*}
J &\leq 2K \varepsilon^\sigma + N \varepsilon^{-d/p} \|\partial_t^\alpha \psi\|_{L_p\left((0,1) \times B_1\right)} + 2 \cdot 8^\sigma M\\
&\,\, + 2^{\sigma-1}K + 3 (\varepsilon_1^\sigma + \varepsilon_3^\sigma)K\\
&\,\, + N (\varepsilon_1^{-2/(\alpha p) - d/p + 1/p} + \varepsilon_3^{-2/(\alpha p) - d/p + 1/p})\|D_x^2\psi \|_{L_p\left((0,1) \times B_1 \right)}\\
&\,\, + \frac{2^{2+\sigma}}{1-2^{\sigma-1}-\varepsilon_2^\sigma} M + \frac{4\varepsilon_2^\sigma}{1-2^{\sigma-1}-\varepsilon_2^\sigma}K + N \frac{\varepsilon_2^{-2/(\alpha p) - d/p + 1/p}}{1-2^{\sigma-1} - \varepsilon_2^\sigma}\|D^2_x \psi\|_{L_p\left((0,1) \times B_1 \right)}
\end{align*}
for any $\varepsilon \in (0,1)$, $\varepsilon_1, \varepsilon_3 \in (0,1/2)$, and $\varepsilon_2$ satisfying \eqref{eq0225_06}, where it is crucial that there is only one term $2^{\sigma-1}K$ on the right-hand side of the inequality.
Since $(t_1,x)$ and $(t_2,y)$ are arbitrary points in $(0,1) \times B_1$, we see that
\begin{align*}
K &\leq 2K \varepsilon^\sigma + N \varepsilon^{-d/p} \|\partial_t^\alpha \psi\|_{L_p\left((0,1) \times B_1\right)} + 2 \cdot 8^\sigma M\\
&\,\, + 2^{\sigma-1}K + 3 (\varepsilon_1^\sigma + \varepsilon_3^\sigma)K\\
&\,\, + N (\varepsilon_1^{-2/(\alpha p) - d/p + 1/p} + \varepsilon_3^{-2/(\alpha p) - d/p + 1/p})\|D_x^2\psi \|_{L_p\left((0,1) \times B_1 \right)}\\
&\,\, + \frac{2^{2+\sigma}}{1-2^{\sigma-1}-\varepsilon_2^\sigma} M + \frac{4\varepsilon_2^\sigma}{1-2^{\sigma-1}-\varepsilon_2^\sigma}K + N \frac{\varepsilon_2^{-2/(\alpha p) - d/p + 1/p}}{1-2^{\sigma-1} - \varepsilon_2^\sigma}\|D^2_x \psi\|_{L_p\left((0,1) \times B_1 \right)}.
\end{align*}
Upon using the fact that  $2^{\sigma-1} < 1$, we fix $\varepsilon$, $\varepsilon_1$, $\varepsilon_2$, and $\varepsilon_3$ small enough depending on $d$, $\alpha$, and $p$ so that
$$
1-2^{\sigma-1} - 2\varepsilon^\sigma - 3(\varepsilon_1^\sigma+\varepsilon_3^\sigma) - \frac{4 \varepsilon_2^\sigma}{1-2^{\sigma-1}-\varepsilon_2^\sigma} > 0.
$$
Then
$$
K \leq N M + N \|\partial_t^\alpha \psi\|_{L_p\left((0,1) \times B_1\right)} + N \|D_x^2\psi \|_{L_p\left((0,1) \times B_1 \right)}.
$$
Finally, we observe that, by interpolation inequalities, for any $\varepsilon_4 > 0$,
\begin{equation}
                                        \label{eq2.36}
M \leq \varepsilon_4 K + N(d,\alpha,p,\varepsilon_4) \|\psi\|_{L_p\left((0,1) \times B_1\right)}.
\end{equation}
The theorem is proved.
\end{proof}

\begin{corollary}[Embedding with $\alpha$-time derivative and $2$-spatial derivatives with $p \in (d/2+1/\alpha,d+2/\alpha)$]
Let $\alpha \in (0,1)$ and $p \in (1,\infty)$ such that
$$
\sigma := 2-(d+2/\alpha)/p \in (0,1).
$$
Then, for $\psi \in \bH_{p,0}^{\alpha,2}\left((0,1) \times \bR^d\right)$, we have
$$
[\psi]_{C^{\sigma \alpha/2, \sigma}\left((0,1) \times \bR^d\right)} \leq N(d,\alpha,p) \|\psi\|_{\bH_p^{\alpha,2}\left((0,1) \times \bR^d\right)}.
$$
\end{corollary}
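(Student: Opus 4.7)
The plan is to reduce the global Hölder estimate on $(0,1)\times\bR^d$ to the local one already obtained in Theorem \ref{thm5.18} by a simple covering argument. The main tool is translation invariance: for every $x_0\in\bR^d$, the restriction $\psi|_{(0,1)\times B_1(x_0)}$ belongs to $\bH_{p,0}^{\alpha,2}((0,1)\times B_1(x_0))$ (the approximating sequence from the definition of $\bH_{p,0}^{\alpha,2}((0,1)\times\bR^d)$ restricts trivially since $B_1(x_0)$ is bounded and $\psi_n(0,\cdot)=0$ is preserved), so applying Theorem \ref{thm5.18} after translation yields
$$
[\psi]_{C^{\sigma\alpha/2,\sigma}((0,1)\times B_1(x_0))}
\le N(d,\alpha,p)\,\|\psi\|_{\bH_p^{\alpha,2}((0,1)\times B_1(x_0))}
\le N(d,\alpha,p)\,\|\psi\|_{\bH_p^{\alpha,2}((0,1)\times\bR^d)},
$$
with the constant $N$ independent of $x_0$.

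Next I upgrade this local Hölder estimate to a uniform $L_\infty$ bound. Fix $(t,x)\in(0,1)\times B_1(x_0)$. For any other $(s,y)$ in that cylinder, the local Hölder estimate gives
$$
|\psi(t,x)|\le |\psi(s,y)|+N\bigl([\psi]_{C^{\sigma\alpha/2,\sigma}((0,1)\times B_1(x_0))}\bigr),
$$
since the diameter of the cylinder is bounded. Averaging in $(s,y)$ and using Hölder's inequality to pass from the average to the $L_p$ norm, then using the local Hölder estimate above, I obtain
$$
\|\psi\|_{L_\infty((0,1)\times B_1(x_0))}\le N\|\psi\|_{\bH_p^{\alpha,2}((0,1)\times B_1(x_0))}\le N\|\psi\|_{\bH_p^{\alpha,2}((0,1)\times\bR^d)}.
$$
Taking the supremum over $x_0\in\bR^d$ yields $\|\psi\|_{L_\infty((0,1)\times\bR^d)}\le N\|\psi\|_{\bH_p^{\alpha,2}((0,1)\times\bR^d)}$.

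Finally, for any two distinct points $(t_1,x),(t_2,y)\in(0,1)\times\bR^d$ I distinguish two cases. If $|x-y|< 1$, then both points lie in $(0,1)\times B_1(x)$, and the localized Theorem \ref{thm5.18} (with $x_0=x$) bounds the Hölder quotient by $N\|\psi\|_{\bH_p^{\alpha,2}((0,1)\times\bR^d)}$. If $|x-y|\ge 1$, the denominator $|t_1-t_2|^{\sigma\alpha/2}+|x-y|^\sigma$ is at least $1$, so
$$
\frac{|\psi(t_1,x)-\psi(t_2,y)|}{|t_1-t_2|^{\sigma\alpha/2}+|x-y|^\sigma}
\le 2\|\psi\|_{L_\infty((0,1)\times\bR^d)}
\le N\|\psi\|_{\bH_p^{\alpha,2}((0,1)\times\bR^d)},
$$
which finishes the proof.

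There is no substantive obstacle here: translation invariance and the existence of a good approximating sequence make the localization immediate, and the only mild point is the transition from the Hölder semi-norm to an $L_\infty$ bound, which is handled by a standard averaging argument.
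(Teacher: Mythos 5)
Your proof is correct and follows essentially the same strategy as the paper: reduce to the local estimate of Theorem \ref{thm5.18} by translation when $|x-y|<1$, and handle $|x-y|\ge 1$ via an $L_\infty$ bound. The only cosmetic difference is that you re-derive the $L_\infty$ bound by an explicit averaging argument, whereas the paper simply invokes the interpolation inequality \eqref{eq2.36} (which encodes the same averaging step) together with the already-established bound on the local H\"older seminorm $K$.
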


\begin{proof}
As in the proof Theorem \ref{thm1204_1}, we assume that $\psi \in C^\infty_0 \left([0,1] \times \bR^d\right)$ and $\psi(0,x) = 0$.
Consider
$$
\frac{|\psi(t,x) - \psi(s,y)|}{|t-s|^{\sigma \alpha/2} + |x-y|^\sigma}
$$
for two different points $(t,x),(s,y) \in (0,1)\times \bR^d$. If $|x-y|<1$, then we apply Theorem \ref{thm5.18} with a shift of the coordinates. If $|x-y|>1$, then the above quantity is bounded by $2\|\psi\|_{L_\infty((0,1)\times \bR^d)}$, and it suffices to use the interpolation inequality \eqref{eq2.36}.
\end{proof}

The following is a version of the ``crawling of ink spots'' lemma to be used in the proofs of the main results of this paper.
Note that the underlying set $(-\infty,T) \times \bR^d$ is unbounded.
Recall the definitions of $\cC_R(t,x)$ and $\hat{\cC}_R(t,x)$ in \eqref{eq0606_01}.

\begin{lemma}
							\label{lem0409_2}
Let $\gamma \in (0,1)$ and $|E| < \infty$.
Suppose that $E \subset F \subset (-\infty,T) \times \bR^d$ and, for any $(t,x) \in (-\infty,T] \times \bR^d$ and for all $R \in (0,\infty)$ with
$$
\left| \cC_R(t,x) \cap E \right| \ge \gamma |\cC_R(t,x)|,
$$
we have
$$
 \hat\cC_R(t,x) \subset F.
$$
Then
\begin{equation}
							\label{eq0409_03}
|E| \leq N(d,\alpha) \gamma |F|.
\end{equation}
\end{lemma}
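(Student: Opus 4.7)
The plan is to prove this by a Vitali-type covering argument combined with a maximality argument on the radii. First, I would discard the Lebesgue-null set of non-density points of $E$ and assume every point of $E$ has density one in $E$. For each $(t_0,x_0)\in E$ I would set
\[
R^*(t_0,x_0) = \sup\bigl\{R>0 : |\cC_R(t_0,x_0)\cap E|\ge \gamma\,|\cC_R(t_0,x_0)|\bigr\}.
\]
Lebesgue density forces $R^*(t_0,x_0)>0$, while $|E|<\infty$ together with $|\cC_R|\to\infty$ as $R\to\infty$ produces a uniform upper bound on $R^*$. Continuity of $R\mapsto|\cC_R(t_0,x_0)\cap E|$ forces the density inequality at $R=R^*$ itself, so the hypothesis of the lemma applies and $\hat\cC_{R^*(t_0,x_0)}(t_0,x_0)\subset F$; by the supremum property, $|\cC_R(t_0,x_0)\cap E|<\gamma\,|\cC_R(t_0,x_0)|$ for every $R>R^*(t_0,x_0)$.

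Next I would apply the Vitali covering lemma to the family $\{\cC_{R^*(t_0,x_0)}(t_0,x_0):(t_0,x_0)\in E\}$. A direct check of the parabolic geometry shows that if two cylinders $\cC_{R_1}(t_1,x_1)$ and $\cC_{R_2}(t_2,x_2)$ with $R_2\le R_1$ have nonempty intersection, then $\cC_{R_2}(t_2,x_2)\subset\cC_{3R_1}(t_1,x_1)$, which is the standard quasi-metric ball inequality needed to run Vitali's selection argument. This produces a countable pairwise disjoint subfamily $\{\cC_{R_i^*}(t_i,x_i)\}$ such that $E\subset\bigcup_i\cC_{3R_i^*}(t_i,x_i)$ up to a null set. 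Since $3R_i^*>R_i^*$, the maximality of $R_i^*$ gives
\[
|E\cap\cC_{3R_i^*}(t_i,x_i)| \le \gamma\,|\cC_{3R_i^*}(t_i,x_i)| = 3^{d+2/\alpha}\gamma\,|\cC_{R_i^*}(t_i,x_i)|,
\]
hence $|E|\le 3^{d+2/\alpha}\gamma\sum_i|\cC_{R_i^*}(t_i,x_i)|$.

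Finally, I would bound the sum $\sum_i|\cC_{R_i^*}|$ by $|F|$. Since each $(t_i,x_i)\in E$ satisfies $t_i<T$, the time interval of $\hat\cC_{R_i^*}(t_i,x_i)$ has length at least $R_i^{*\,2/\alpha}$, giving $|\hat\cC_{R_i^*}(t_i,x_i)|\ge\tfrac12|\cC_{R_i^*}(t_i,x_i)|$. The sets $\hat\cC_{R_i^*}(t_i,x_i)$ are pairwise disjoint, being subsets of the disjoint $\cC_{R_i^*}$, and each lies in $F$ by the first step, so $\sum_i|\cC_{R_i^*}(t_i,x_i)|\le 2\sum_i|\hat\cC_{R_i^*}(t_i,x_i)|\le 2|F|$. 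Combining these steps yields $|E|\le 2\cdot 3^{d+2/\alpha}\gamma\,|F|$, which is \eqref{eq0409_03}. The main point requiring care is matching the hypothesis (which only guarantees $\hat\cC_R\subset F$, not the full $\cC_R$) to the Vitali estimate on $\sum|\cC_{R_i^*}|$; this asymmetry is exactly absorbed by the factor-of-two loss in $|\hat\cC_R|\ge\tfrac12|\cC_R|$ that holds for every center with $t<T$.
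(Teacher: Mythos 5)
Your proof is correct and follows essentially the same strategy as the paper's (Lebesgue density plus a Vitali-type covering by cylinders at the critical density level), but your presentation is tighter because of one clean observation the paper does not exploit: since $|\cC_R \cap E| = \gamma |\cC_R|$ at $R = R^*(t_0,x_0)$ and $|\cC_R \cap E| \le |E| < \infty$, the critical radius is uniformly bounded by $(\gamma^{-1}|E|/c(d,\alpha))^{\alpha/(\alpha d+2)}$. This lets you invoke the standard Vitali lemma once, whereas the paper reconstructs the greedy selection by hand and then splits into three subcases (the sup of critical radii being infinite, the selected radii decaying to zero or staying bounded below). Your observation in fact shows that the paper's first subcase $R_1^* = \infty$ is vacuous. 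The only small imprecision is the dilation factor: with the usual greedy Vitali selection (pick radius within a factor of $2$ of the running supremum, since the supremum need not be attained over an uncountable family) the cover is by $\cC_{5R_i^*}$, not $\cC_{3R_i^*}$; the paper correspondingly uses $5$. This only changes the explicit constant to $2\cdot 5^{d+2/\alpha}$ and does not affect the statement. Your handling of the asymmetry between the full cylinders $\cC_R$ used in the Vitali count and the truncated cylinders $\hat\cC_R$ that land in $F$ --- absorbing it via $|\hat\cC_R| \ge \tfrac12|\cC_R|$ for centers with $t < T$ --- is exactly the same mechanism as in the paper.
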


\begin{proof}
For every $(t,x) \in E$, define
$$
\varphi_{(t,x)}(r) := \frac{|E \cap \cC_r(t,x)|}{|\cC_r(t,x)|} \leq \frac{|E|}{|\cC_r(t,x)|} \to 0
$$
as $r \to \infty$.
On the other hand, by the Lebesgue differentiation theorem, for almost every $(t,x) \in E$,
$$
\lim_{r \to 0} \varphi_{(t,x)}(r) = 1.
$$
Moreover, $\varphi_{(t,x)}(r)$ is continuous on $(0,\infty)$.
Since $\gamma \in (0,1)$, for almost every $(t,x) \in E$, there exits $r \in (0,\infty)$ such that
$$
\varphi_{(t,x)}(r) = \gamma.
$$
Then we set
$$
R = R(t,x) = \sup\{ r \in (0,\infty): \varphi_{(t,x)}(r) = \gamma\},
$$
where we understand that $\inf \emptyset = \infty$.
Then $0 < R(t,x) \leq
\infty$.
Define
$$
\Gamma_1 = \{\cC = \cC_{R(t,x)}(t,x): (t,x) \in E, \,\, R(t,x) < \infty\}
$$
and
$$
R^*_1 = \sup \{R(t,x): \cC_{R(t,x)}(t,x) \in \Gamma_1\}.
$$
Note that
$$
E \setminus N \subset \bigcup_{\cC_{R(t,x)}(t,x) \in \Gamma_1} \cC_{R(t,x)}(t,x),
$$
where $N$ is a null set.

If $R^*_1 = \infty$, then $\Gamma_1$ contains a sequence of $\cC_{R_k}(t_k,x_k) := \cC_{R(t_k,x_k)}(t_k,x_k)$ with
$$
|\cC_{R_k}(t_k,x_k)| \to \infty
$$
as $k \to \infty$.
In this case, choose $k_1 \in \bN$ such that
$$
|\cC_{R_{k_1}}(t_{k_1},x_{k_1})| \geq 2 \gamma^{-1}|E|.
$$
Since
$$
|\cC_{R_{k_1}}(t_{k_1},x_{k_1}) \cap E | = \gamma |\cC_{R_{k_1}}(t_{k_1},x_{k_1})|,
$$
by the assumption in the lemma, we have
$$
\hat\cC_{R_{k_1}}(t_{k_1},x_{k_1}) \subset F.
$$
It then follows that
$$
|E| \leq \frac{\gamma}{2} |\cC_{R_{k_1}}(t_{k_1},x_{k_1})| \leq  \gamma |\hat\cC_{R_{k_1}}(t_{k_1},x_{k_1})| \leq \gamma |F|.
$$
Hence, we obtain \eqref{eq0409_03}.

If $R^*_1 < \infty$, we find a countable sub-collection $\Gamma_0$ of $\Gamma_1$ as follows.
Choose $\cC_{R_1}(t_1,x_1):=\cC_{R(t_1,x_1)}(t_1,x_1)$ from $\Gamma_1$ such that $R_1 > R^*_1/2$.
Now spit $\Gamma_1 = \Gamma_2 \cup \Gamma_2'$, where $\Gamma_2$ consists of those $\cC_{R(t,x)}(t,x)$ disjoint from $\cC_{R_1}(t_1,x_1)$, and $\Gamma_2'$ of those which intersect $\cC_{R_1}(t_1,x_1)$.
Now we note that
$$
\cC_{R(t,x)}(t,x) \subset \cC_{5R_1}(t_1,x_1),
$$
whenever $\cC_{R(t,x)}(t,x) \in \Gamma_2'$.

Now assume that $\cC_{R_k}(t_k,x_k)$ and $\Gamma_{k+1}$ are chosen.
If $\Gamma_{k+1}$ is empty, the process ends.
If not, we choose $\cC_{R_{k+1}}(t_{k+1}, x_{k+1})$ from $\Gamma_{k+1}$ such that
$$
R_{k+1} > \frac{1}{2} R^*_{k+1}, \quad
R^*_{k+1}:= \sup_{\cC_{R(t,x)}(t,x) \in \Gamma_{k+1}} R(t,x).
$$
Then split $\Gamma_{k+1} = \Gamma_{k+2} \cup \Gamma_{k+2}'$, where $\Gamma_{k+2}$ consists of those $\cC_{R(t,x)}(t,x)$ disjoint from $\cC_{R_{k+1}}(t_{k+1}, x_{k+1})$, and $\Gamma_{k+2}'$ of those which intersect $\cC_{R_{k+1}}(t_{k+1}, x_{k+1})$.
Now we set
$$
\Gamma_0 = \{\cC_{R_k}(t_k,x_k) : k = 1, 2, \ldots \}.
$$
Clearly, we have $\cC_{R_k}(t_k,x_k) \cap \cC_{R_j}(t_j,x_j) = \emptyset$ if $k \neq j$.

Now we prove \eqref{eq0409_03} when $R^*_1 < \infty$.
First, consider the case that $\Gamma_0$ contains only finitely many elements or $\Gamma_0$ has infinitely many elements with $R_k^* \searrow 0$.
Then
\begin{equation}
							\label{eq0409_05}
\Gamma_1 = \bigcup_{k=2}^\infty \Gamma_k'.
\end{equation}
In particular, when $R_k^* \searrow 0$, if there exits $\cC_{R(t,x)}(t,x) \in \Gamma_1$ such that
$$
\cC_{R(t,x)}(t,x) \not\in \bigcup_{k=2}^\infty \Gamma_k',
$$
then $\cC_{R(t,x)}(t,x) \in \Gamma_k$ for all $k=1,2,\ldots$.
This means that $R(t,x) = 0$, which is a contradiction because $R(t,x) > 0$.
From \eqref{eq0409_05} and the fact that
$$
\cC_{R(t,x)}(t,x) \subset \cC_{5 R_k(t_k,x_k)}
$$
for any $\cC_{R(t,x)}(t,x) \in \Gamma_{k+1}'$ and $k = 1,2,\ldots$, we have
\begin{equation}
								\label{eq0409_04}
E \setminus N \subset \bigcup_{\cC_{R(t,x)}(t,x) \in \Gamma_1} \cC_{R(t,x)}(t,x) \subset \bigcup_{k=1}^\infty \cC_{5R_k}(t_k,x_k),
\end{equation}
where $N$ is a null set.
Note that, for each $k = 1,2,\ldots$,
$$
|\cC_{R_k}(t_k,x_k) \cap
E| = \gamma|\cC_{R_k}(t_k,x_k)|,
$$
$$
|\cC_{5R_k}(t_k,x_k) \cap
E| < \gamma|\cC_{5R_k}(t_k,x_k)|.
$$
Hence, by the assumption,
$$
\hat\cC_{R_k}(t_k,x_k) \subset F,
$$
and by the disjointness of $\cC_{R_k}(t_k,x_k)$ and \eqref{eq0409_04} we have
\begin{align*}
&|E| \leq | \bigcup_{k=1}^\infty E \cap \cC_{5R_k}(t_k,x_k)| \leq \sum_{k=1}^\infty |E \cap \cC_{5R_k}(t_k,x_k)|\\
&\leq \gamma \sum_{k=1}^\infty |\cC_{5R_k}(t_k,x_k)| = 5^{d + 2/\alpha} \gamma \sum_{k=1}^\infty |\cC_{R_k}(t_k,x_k)|  = 5^{d + 2/\alpha} \gamma \left|\bigcup_{k=1}^\infty \cC_{R_k}(t_k,x_k)\right|\\
&\leq 5^{d+2/\alpha} 2 \gamma \left| \bigcup_{k=1}^\infty \hat\cC_{R_k}(t_k,x_k)\right| \leq N(d,\alpha) \gamma |F|.
\end{align*}
Thus we obtain \eqref{eq0409_03}.
Now assume that there exists a number $\varepsilon_0 > 0$ such that $R_k^* \ge \varepsilon_0$ for all $k = 1,2, \ldots$.
This means that
$$
\left|\bigcup_{k=1}^M \cC_{R_k}(t_k,x_k)\right| = \sum_{k=1}^M |\cC_{R_k}(t_k,x_k)| \to \infty
$$
as $M \to \infty$.
Then we find $M$ such that
$$
\left|\bigcup_{k=1}^M \cC_{R_k}(t_k,x_k)\right| \geq 2 \gamma^{-1}|E|.
$$
Since $\hat\cC_{R_k}(t_k,x_k) \subset F$, we have
$$
|E| \leq \frac{\gamma}{2} \sum_{k=1}^M |\cC_{R_k}(t_k,x_k)|  \leq \gamma \left| \bigcup_{k=1}^M \hat\cC_{R_k}(t_k,x_k) \right| \leq \gamma |F|.
$$
Thus we again arrive at \eqref{eq0409_03}.
\end{proof}

\bibliographystyle{plain}

\def\cprime{$'$}

\end{document}